\newtheorem{theorem}{Theorem}
\theoremstyle{plain}
\newtheorem{corollary}{Corollary}
\newtheorem{definition}{Definition}
\newtheorem{lemma}{Lemma}
\newtheorem{proposition}{Proposition}
\newtheorem{remark}{Remark}
\numberwithin{equation}{section}
\newcommand{\vertiii}[1]{{\left\vert\kern-0.25ex\left\vert\kern-0.25ex\left\vert #1 
    \right\vert\kern-0.25ex\right\vert\kern-0.25ex\right\vert}}
\newcommand{\vertiiin}[1]{{\vert\kern-0.25ex\vert\kern-0.25ex\vert #1 
    \vert\kern-0.25ex\vert\kern-0.25ex\vert}}
\begin{document}
\title[Spectral stability of the peridynamic fractional Laplacian]{A restricted nonlocal operator bridging together the Laplacian and the fractional Laplacian}
\author[Jos\'e C. Bellido]{Jos\'e C. Bellido}
\address{Jos\'e C. Bellido\hfill\break\indent
E.T.S.I. Industriales \& INEI, Department of Mathematics\hfill\break\indent 
University of Castilla-La Mancha\hfill\break\indent 
Ciudad Real, E-13071 \hfill\break\indent 
Spain \hfill\break\indent } 
\email{josecarlos.bellido@uclm.es}

\author[Alejandro Ortega]{Alejandro Ortega}
\address{Alejandro Ortega\hfill\break\indent
E.T.S.I. Industriales, Department of Mathematics\hfill\break\indent 
University of Castilla-La Mancha\hfill\break\indent 
Ciudad Real, E-13071 \hfill\break\indent 
Spain \hfill\break\indent } 
\email{alejandro.ortega@uclm.es}

\thanks{This work has been supported by the {\it Agencia Estatal de Investigaci\'on, Ministerio de Ciencia e Innovaci\'on} (Spain) through
project MTM2017-83740-P}
\thanks{This paper is in final form and no version of it will be submitted for
publication elsewhere.}
\date{\today}
\subjclass[2010]{Primary 35R11, 35S15, 49J45; Secondary 47G20, 45G05} %
\keywords{Fractional Laplacian,  Nonlocal Elliptic Problems, $\Gamma$-Convergence, Peridynamics, Volume Constrained Problems}%

\begin{abstract}
In this work we introduce volume constraint problems involving the nonlocal operator $(-\Delta)_{\delta}^{s}$, closely related to the fractional Laplacian $(-\Delta)^{s}$, and depending upon a parameter $\delta>0$ called horizon. We study the associated linear and spectral problems and the behavior of these volume constraint problems when $\delta\to0^+$ and $\delta\to+\infty$. Through these limit processes on $(-\Delta)_{\delta}^{s}$ we derive spectral convergence to the local Laplacian and to the fractional Laplacian as $\delta\to 0^+$ and $\delta \to +\infty$ respectively, as well as we prove the convergence of solutions of these problems to solutions of a local Dirichlet problem involving $(-\Delta)$ as $\delta\to0^+$ or to solutions of a nonlocal fractional Dirichlet problem involving $(-\Delta)^s$ as $\delta\to+\infty$.
\end{abstract}
\maketitle


\section{Introduction}
Nonlocal and fractional elliptic problems have attracted a great attention in the mathematical community in the last two decades, coming from fields, among others, as nonlocal diffusion \cite{BuVal,Mazon}, statistical mechanics \cite{AlBe}, continuum mechanics, including peridynamics, \cite{EvBe,KassMenScott,MenDu,Si}, and imaging processing \cite{KinOJo,GilStan,BoElPonScher}. For a good account on nonlocal modeling we refer to \cite{Du}. Nonlocal variational problems are also of importance in the characterization Sobolev spaces \cite{BourBrezMiro,Ponce,GioSpec,Spec,MenSpec,Men}. Interesting surveys, which include an exhaustive list of references, on the fractional Laplacian and nonlocal elliptic problems are \cite{RosOton,MR4043885}.  

In this work we study volume constraint elliptic problems driven by a nonlocal operator closely related to the well-known fractional Laplace operator. In particular, given an open bounded domain $\Omega\subset\mathbb{R}^N$ with Lipschitz boundary and $\delta>0$, a parameter called \textit{horizon}, let us define the problem,
\begin{equation}\label{problema}
     \left\{\begin{array}{rl}
     (-\Delta)_{\delta}^su=f &\quad\mbox{in}\quad \Omega,\\
                         u=0 &\quad\mbox{on}\quad \partial_{\delta}\Omega,\\
		 \end{array}\right.
				\tag{$P_{\delta}^{s}$}
\end{equation}
where, 
\begin{equation}\label{defLaplacian}
(-\Delta)_{\delta}^su(x)=c_{N,s}P.V.\int_{B(x,\delta)}\frac{u(x)-u(y)}{|x-y|^{n+2s}}dy,
\end{equation}
with $c_{N,s}=\frac{2^{2s}s\Gamma(\frac{N}{2}+s)}{\pi^{\frac{N}{2}}\Gamma(1-s)}$ a normalization constant and $\partial_\delta\Omega$ the nonlocal boundary given by 
\begin{equation*}
\partial_{\delta}\Omega=\{y\in\mathbb{R}^N\backslash\Omega: |x-y|<\delta\ \text{for}\ x\in\Omega\}.
\end{equation*}
The operator $(-\Delta)_\delta^s$ is not new, and it has been addressed in different studies in the literature before. In view of the definition of $(-\Delta)_{\delta}^s$, it is clear that long-range interactions are neglected and only those exerted at distance smaller than $\delta>0$ are taken into account, i.e., the horizon $\delta>0$ represents the range of interactions. In this sense, the operator $(-\Delta_\delta^s)$, pertaining to the class of nonlocal elliptic operators, it is clearly inspired by peridynamics, and it could actually be seen as a \textit{peridynamic fractional Laplacian}. Peridynamics is a nonlocal continuum model for Solid Mechanics proposed by Silling in \cite{Si}. The main difference with classical theory relies on the nonlocality, reflected in the fact that points separated by a positive distance exert a force upon each other. Since the use of gradients is avoided, peridynamics is a suitable framework for problems where discontinuities appear naturally, such as fracture, dislocation, or, in general, multiscale materials. Operator $(-\Delta)_\delta^s$ fits into bond-based peridynamics (see \cite{Si}), where the elastic energy is computed through a double integral of a pairwise potential function. In \cite{Siwei} a numerical study comparing $(-\Delta)_\delta^s$ with the fractional Laplacian, the spectral fractional Laplacian and the regional Laplacian is performed. In \cite{Alali}, the Fourier multiplier associated to $(-\Delta)_\delta^s$ is computed and, as a consequence, convergence of $(-\Delta)_\delta^su(x)$ to $(-\Delta)u(x)$, for sufficiently smooth $u$, is obtained as $\delta\to 0^+$ or $s\to 1^-$. Also, $(-\Delta)_\delta^s$ was studied in \cite{DeliaGun} in connection with the fractional Laplacian, $(-\Delta)^s=(-\Delta)_{\infty}^s$, and with the motivation of computing numerical approximations of its solutions. Notice that taking the limit as $\delta\to+\infty$ one recovers, at least formally, the usual nonlocal elliptic problem driven by the fractional Laplace operator with boundary condition on the complementary of the domain $\Omega$. Following the probabilistic interpretation for the fractional Laplacian $(-\Delta)_{\infty}^s$, the operator $(-\Delta)_\delta^s$ can be seen as the infinitesimal generator of a symmetric $2s$-stable L\'evy process restricted to $B(x,\delta)$.

At this point, it is worth mentioning that problem \eqref{problema} and the underlying boundary geometry $\partial_{\delta}\Omega$ play and intermediate role between the classical local problem driven by the Laplace operator $(-\Delta)$, where the boundary condition is imposed on $\partial\Omega$, and the nonlocal problem driven by the standard fractional Laplacian $(-\Delta)_{\infty}^s$, where the boundary condition is imposed on the whole $\mathbb{R}^N\backslash\Omega$.

By means of a change of variables, we can write the singular integral \eqref{defLaplacian} as a
weighted second order differential quotient so that the operator $(-\Delta)_{\delta}^s$ admits the representation
\begin{equation}\label{seconddiff}
(-\Delta)_{\delta}^su(x)=-\frac12c_{N,s}\int_{B(0,\delta)}\frac{u(x+y)-2u(x)+u(x-y)}{|y|^{N+2s}}dy.
\end{equation}
As it happens with the standard fractional Laplacian $(-\Delta)_{\infty}^s$, because of \eqref{seconddiff}, the operator $(-\Delta)_{\delta}^s$ has the following monotonicity property: if $u$ has a global maximum at $x$, then $(-\Delta)_{\delta}^s u(x) \geq 0$ for any horizon $\delta>0$. In addition, if $u$ has a local maximum, then there exists an horizon $\delta=\delta(u)>0$ such that $(-\Delta)_{\delta}^s u(x) \geq 0$.

In this paper the limit properties of $(-\Delta)_\delta^s$, both as $\delta \to 0^+$ and as $\delta \to +\infty$, are addressed. We show convergence of solutions and spectral stability, i.e., convergence of eigenvalues and eigenfunctions, to the local Laplacian and to the fractional Laplacian as $\delta \to 0^+$ and as $\delta \to +\infty$ respectively. Therefore, the operator $(-\Delta)_\delta^s$ is an intermediate operator in between the local Laplacian and the fractional Laplacian. 

This investigation fits into the framework of $\Gamma$-convergence (cf. \cite{Braides}). The results for the case $\delta \to 0^+$ rely on a general $\Gamma$-convergence result from \cite{BellCorPed}, whereas the results for the case $\delta \to+\infty$ are consequence of the $\Gamma$-convergence of the energy associated to $(-\Delta)_\delta^s$ to the one corresponding to $(-\Delta)_\infty^s$ induced by the monotone convergence of the sequence of energies. 

References related to this work are the following. Also in the framework of $\Gamma$-convergence is \cite{BrascoPariniSquassina}, where spectral convergence of the fractional $p\,$-Laplacian to the classical $p\,$-Laplacian as $s\to 1^-$ is shown. Closely related to this work is \cite{AndresMunoz}, where spectral stability for certain nonlocal problems in the case $\delta\to 0^+$ is shown without explicitly appealing to $\Gamma$-convergence. As mentioned before, in \cite{DeliaGun}, the convergence phenomena as $\delta\to+\infty$ is addressed for a class of nonlocal linear problems, including the operator $(-\Delta)_\delta^s$ as a particular case, with a direct approach relying on the linearity of those problems.

\textbf{Organization of the paper}: In Section \ref{functionalsetting} we introduce the appropriate functional setting to deal with problems involving the operator $(-\Delta)_{\delta}^s$ and we present the main results proved in this paper. In Section \ref{preliminaryresults} some results about $\Gamma$-convergence that will be essential to prove the main results of this work are included. In Section \ref{horizon0} we prove the results concerning the behaviour of \eqref{problema} and the eigenvalue problem associated to $(-\Delta)_{\delta}^s$ when one takes the horizon $\delta\to0^+$. Finally, in Section \ref{horizoninfty}  we prove the results concerning the behaviour of \eqref{problema} and the eigenvalue problem associated to $(-\Delta)_{\delta}^s$ when one takes the horizon $\delta\to+\infty$. 
\section{Functional setting and Main Results}\label{functionalsetting}
Let us start recalling the fractional-order Sobolev space $H^s(\Omega)$. Given a regular bounded domain $\Omega\subset\mathbb{R}^N$, let us set 
\begin{equation*}
H^s(\Omega)=\{v\in L^2(\Omega):\|v\|_{H^s(\Omega)}<\infty\},
\end{equation*}
where 
\begin{equation*}
\|v\|_{H^s(\Omega)}^2=\|v\|_{L^2(\Omega)}^2+|v|_{H^s(\Omega)}^2
\end{equation*}
with $|\cdot|_{H^s(\Omega)}$ denoting the Gagliardo semi-norm,
\begin{equation}\label{Gagliseminorm}
|v|_{H^s(\Omega)}^2=\int_{\Omega}\int_{\Omega}\frac{|v(x)-v(y)|^2}{|x-y|^{N+2s}}dydx.
\end{equation}
Also, let $H_0^s(\Omega)$ be the completion of $C_0^{\infty}(\Omega)$ under the norm $\|\cdot\|_{H^s(\Omega)}^2$, i.e.
\begin{equation*}
H_0^s(\Omega)=\overline{\mathcal{C}^{\infty}_0 (\Omega)}^{\| \cdot \|_{H^s(\Omega)}}.
\end{equation*}
Due to \cite[Theorem 11.1]{LionMag}, if $0<s\leq\frac{1}{2}$ then $H_0^s(\Omega)=H^s(\Omega)$ while for $\frac{1}{2}<s<1$ we have the strict inclusion $H_0^s(\Omega)\subsetneq H^s(\Omega)$.\newline 
Next, denoting by $\Omega^c=\mathbb{R}^N\backslash\Omega$, let us set the energy space 
\begin{equation*}
\mathcal{H}_0^s(\Omega)=\{v\in H^s(\mathbb{R}^N): v=0 \text{ on }\Omega^c\}
\end{equation*}
endowed with the norm inherited from $H^s(\mathbb{R}^N)$. Let us note that, given $v\in \mathcal{H}_0^s(\Omega)$, although $v=0$ on $\Omega^c$, the norms $\|v\|_{H^s(\Omega)}$ and $\|v\|_{\mathcal{H}_0^s(\Omega)}$ are not the same. Indeed, denoting by $\mathcal{D}=\big(\mathbb{R}^N\times\mathbb{R}^N\big)\big\backslash\big(\Omega^c\times\Omega^c\big)$, we have the strict inclusion $\Omega\times\Omega\subsetneq\mathcal{D}$. In other words, the norm $\|\cdot\|_{\mathcal{H}_0^s(\Omega)}$ takes into account the interaction between $\Omega$ and $\Omega^c$, i.e.,
\begin{equation*}
\begin{split}
\|v\|_{\mathcal{H}_0^s(\Omega)}^2=&\|v\|_{H^s(\mathbb{R}^N)}^2\\
=&\|v\|_{L^2(\mathbb{R}^N)}^2+\int_{\mathbb{R}^N}\int_{\mathbb{R}^N}\frac{|v(x)-v(y)|^2}{|x-y|^{N+2s}}dydx\\
=&\|v\|_{L^2(\Omega)}^2+\iint\limits_{\mathcal{D}}\frac{|v(x)-v(y)|^2}{|x-y|^{N+2s}}dydx.
\end{split}
\end{equation*}
Therefore, the space $\mathcal{H}_0^s(\Omega)$ is the appropriate space to deal with homogeneous elliptic boundary value problems involving the fractional Laplace operator,
\begin{equation}\label{fracclaplacian}
(-\Delta)_{\infty}^s u(x)=c_{N,s}P.V.\int_{\mathbb{R}^N}\frac{u(x)-u(y)}{|x-y|^{N+2s}}dy.
\end{equation}
On the other hand, by \cite[Theorem 6.5]{DiNezzaPalaValdi}, there exists a constant $S(N,s)>0$ such that
\begin{equation*}
\|v\|_{L^{2_{s}^{*}}(\mathbb{R}^N)}^2\leq S(N,s)\iint\limits_{\mathcal{D}}\frac{|v(x)-v(y)|^2}{|x-y|^{N+2s}}dydx, \quad \forall\,v\in\mathcal{H}_0^s(\Omega),
\end{equation*}
with $2_s^*=\frac{2N}{N-2s}$, the critical fractional Sobolev exponent. Hence, for all $v\in\mathcal{H}_0^s(\Omega)$,
\begin{equation*}
\iint\limits_{\mathcal{D}}\frac{|v(x)-v(y)|^2}{|x-y|^{N+2s}}dydx\leq \|v\|_{\mathcal{H}_0^s(\Omega)}^2\leq C_1(\Omega,N,s)\iint\limits_{\mathcal{D}}\frac{|v(x)-v(y)|^2}{|x-y|^{N+2s}}dydx,
\end{equation*}
with $C_1(\Omega,N,s)=S(N,s)|\Omega|^{\frac{2_s^*-2}{2_s^*}}$. Therefore, we can renormize the space $\mathcal{H}_0^s(\Omega)$ and consider it endowed with the norm
\begin{equation}\label{normHfrac}
\vertiii{v}_{\mathcal{H}_0^s}^2=\iint\limits_{\mathcal{D}}\frac{|v(x)-v(y)|^2}{|x-y|^{N+2s}}dydx.
\end{equation}
Next, given an horizon $\delta>0$, let us define the ({\it nonlocally}) completed domain
\begin{equation*}
\Omega_{\delta}=\Omega\cup\partial_{\delta}\Omega=\{y\in\mathbb{R}^N:\ |x-y|<\delta,\ \text{for}\ x\in\Omega\},
\end{equation*}
and the energy space associated to $(-\Delta)_{\delta}^s$ as
\begin{equation*}
\mathbb{H}^s(\Omega_{\delta})=\{v\in L^2(\Omega_{\delta}): \|v\|_{\mathbb{H}^s(\Omega_{\delta})}<\infty\},
\end{equation*}
where
\begin{equation*}
\|v\|_{\mathbb{H}^s(\Omega_{\delta})}^2=\|v\|_{L^2(\Omega_{\delta})}^2+| v |_{\mathbb{H}^s(\Omega_{\delta})}^2,
\end{equation*}
with
\begin{equation*}
| v |_{\mathbb{H}^s(\Omega_{\delta})}^2=\int_{\Omega_{\delta}}\int_{\Omega_{\delta}\cap B(x,\delta)}\frac{|v(x)-v(y)|^2}{|x-y|^{N+2s}}dydx.
\end{equation*} 
The next result links the semi-norm $|\cdot|_{\mathbb{H}^s(\Omega_{\delta})}$ with the Gagliardo semi-norm \eqref{Gagliseminorm}.
\begin{proposition}\label{belcor}\cite[Proposition 6.1]{BellCor} Let $s\in(0,1)$, $1\leq p<\infty$, $\delta>0$ and $\Omega\subset\mathbb{R}^N$ be a bounded Lipschitz domain. Then, there exists $C=C(\delta)>0$ such that for all $u\in W^{s,p}(\Omega)$,
\begin{equation*}
\int_{\Omega}\int_{\Omega}\frac{|u(x)-u(y)|^p}{|x-y|^{N+sp}}dydx\leq C\int_{\Omega}\int_{\Omega\cap B(x,\delta)}\frac{|u(x)-u(y)|^p}{|x-y|^{N+sp}}dydx.
\end{equation*}
\end{proposition}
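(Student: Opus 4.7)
The plan is to split the double integral on the left into a ``short-range'' piece where $|x-y|<\delta$ and a ``long-range'' piece where $|x-y|\ge \delta$, and to absorb the latter into the former. The short-range piece is exactly the right-hand side integral, so I only need to control
\[
I_{\mathrm{far}}:=\int_\Omega\int_{\Omega\setminus B(x,\delta)} \frac{|u(x)-u(y)|^p}{|x-y|^{N+sp}}\,dy\,dx.
\]
On this region $|x-y|\ge \delta$, hence $I_{\mathrm{far}}\le \delta^{-(N+sp)}\iint_{\Omega\times\Omega}|u(x)-u(y)|^p\,dy\,dx$. The task therefore reduces to proving the unweighted inequality
\[
\iint_{\Omega\times\Omega}|u(x)-u(y)|^p\,dy\,dx\le C(\Omega,\delta,p,s,N)\int_\Omega\int_{\Omega\cap B(x,\delta)}\frac{|u(x)-u(y)|^p}{|x-y|^{N+sp}}\,dy\,dx,
\]
since the factor $|x-y|^{N+sp}\le \operatorname{diam}(\Omega)^{N+sp}$ then absorbs the missing weight on the right.

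The heart of the matter is a chaining argument exploiting the Lipschitz regularity (hence John/uniform-domain property) of $\Omega$. Fix a finite covering $\{B_j\}_{j=1}^M$ of $\overline{\Omega}$ by balls of radius $\delta/4$ with centers in $\Omega$, chosen so that any two points of $\Omega$ can be joined by a chain of at most $K=K(\Omega,\delta)$ such balls whose consecutive members are at distance $<\delta/2$. Then for $x,y\in\Omega$ and any choice $z_0=x,z_1,\dots,z_{K-1},z_K=y$ with $z_i$ in the $i$-th ball of the chain, the discrete Jensen (power-mean) inequality yields
\[
|u(x)-u(y)|^p\le K^{p-1}\sum_{i=0}^{K-1}|u(z_i)-u(z_{i+1})|^p,
\]
and all consecutive pairs $(z_i,z_{i+1})$ lie within distance $\delta$ of each other inside $\Omega$. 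To turn this pointwise estimate into an integral inequality I would average each intermediate $z_i$ over its ball, so that after applying Fubini each summand becomes $|B|^{-2}\iint_{B\times B'}|u(z)-u(w)|^p\,dz\,dw$ with $B,B'\subset \Omega$ two balls of the covering at distance $<\delta$. Each such piece is bounded by $\delta^{N+sp}$ times the short-range integral $\int_\Omega\int_{\Omega\cap B(x,\delta)}|x-y|^{-N-sp}|u(x)-u(y)|^p\,dy\,dx$, which is exactly the right-hand side.

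The step I expect to be most technical is the measurable selection of the chain: the points $z_1,\dots,z_{K-1}$ generally depend on $(x,y)$, so naive integration of the Jensen estimate is not legal. The fix is to partition $\Omega\times\Omega$ into finitely many cells according to which pair of balls of the fixed cover contains $x$ and $y$, respectively, and to assign one explicit chain of balls to each cell; the uniform chain length $K$ and the uniformity of the cover — both consequences of the Lipschitz regularity of $\Omega$ — then give a constant $C$ depending only on $\Omega,\delta,N,s,p$, as required.
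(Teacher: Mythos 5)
Your argument is correct and is essentially the one used in the cited reference \cite{BellCor}: split $\Omega\times\Omega$ into the near-range region $|x-y|<\delta$ (which gives the right-hand side verbatim) and the far-range region where the kernel is bounded above by $\delta^{-(N+sp)}$, then control the resulting unweighted double integral by a finite chaining of balls of radius $\delta/4$ covering $\overline{\Omega}$ combined with the discrete Jensen inequality and averaging of the intermediate points, with the cell-by-cell assignment of fixed chains handling measurability. One very minor slip: in the sentence justifying the reduction you invoke $|x-y|^{N+sp}\le\operatorname{diam}(\Omega)^{N+sp}$, but the relevant bound (and the one you actually use later) is $|x-y|^{N+sp}\le\delta^{N+sp}$ on the near-range set $\Omega\cap B(x,\delta)$.
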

Because of Proposition \ref{belcor}, the space $\mathbb{H}^s(\Omega_{\delta})$ is isomorphic to the (classical) fractional-order Sobolev space $H^s(\Omega_\delta)$. In order to deal with the boundary value problem \eqref{problema}, we define the energy space
\begin{equation*}
\mathbb{H}_0^{\delta,s}(\Omega)=\{v\in \mathbb{H}^s(\Omega_{\delta}):v\equiv 0\ \text{on}\ \partial_{\delta}\Omega\},
\end{equation*}
endowed with the norm inherited from $\mathbb{H}^s(\Omega_{\delta})$. Let us notice that, given a function $v\in \mathbb{H}_0^{\delta,s}(\Omega)$, although we have $v=0$ on $\partial_{\delta}\Omega=\Omega_{\delta}\backslash\Omega$, the norms $\|v\|_{\mathbb{H}^s(\Omega)}$ and $\|v\|_{\mathbb{H}_0^{\delta,s}(\Omega)}$ are not the same. Indeed, if $v=0$ on $\Omega^c$, since 
\begin{equation*}
\mathbb{H}^s(\Omega)=\{v\in L^2(\Omega): \|v\|_{\mathbb{H}^s(\Omega)}<\infty\},
\end{equation*}
where
\begin{equation*}
\|v\|_{\mathbb{H}^s(\Omega)}^2=\|v\|_{L^2(\Omega)}^2+| v |_{\mathbb{H}^s(\Omega)}^2,
\end{equation*}
with
\begin{equation*}
| v |_{\mathbb{H}^s(\Omega)}^2=\int_{\Omega}\int_{\Omega\cap B(x,\delta)}\frac{|v(x)-v(y)|^2}{|x-y|^{N+2s}}dydx,
\end{equation*} 
denoting by 
\begin{equation*}
\mathcal{D}_{\delta}=\Big(\Omega_{\delta}\times\big(\Omega_{\delta}\cap B(x,\delta)\big)\Big)\Big\backslash\Big(\partial_{\delta}\Omega\times\big(\partial_{\delta}\Omega\cap B(x,\delta)\big)\Big),
\end{equation*}
we have the strict inclusion $\big(\Omega\times(\Omega\cap B(x,\delta))\big)\subsetneq\mathcal{D}_{\delta}$. Hence, the norm $\|\cdot\|_{\mathbb{H}_0^{\delta,s}(\Omega)}$ takes into account the interaction between $\Omega$ and $\partial_{\delta}\Omega$ in the sense that
\begin{equation*}
\begin{split}
\|v\|_{\mathbb{H}_0^{\delta,s}(\Omega)}^2=&\|v\|_{\mathbb{H}^s(\Omega_{\delta})}^2\\
=&\|v\|_{L^2(\Omega_{\delta})}^2+\int_{\Omega_{\delta}}\int_{\Omega_{\delta}\cap B(x,\delta)}\frac{|v(x)-v(y)|^2}{|x-y|^{N+2s}}dydx\\
=&\|v\|_{L^2(\Omega)}^2+\iint\limits_{\mathcal{D}_{\delta}}\frac{|v(x)-v(y)|^2}{|x-y|^{N+2s}}dydx.
\end{split}
\end{equation*}
Therefore, the space $\mathbb{H}_0^{\delta,s}(\Omega)$ is the appropriate space to deal with homogeneous elliptic boundary value problems involving the operator $(-\Delta)_{\delta}^s$.\newline
Some comments are in order. Comparing the norms $\|\cdot\|_{\mathcal{H}_0^s(\Omega)}$ and $\|\cdot\|_{\mathbb{H}_0^{\delta,s}(\Omega)}$ we observe that $\partial_{\delta}\Omega$ plays the role of $\Omega^c$. Indeed, the sets $\Omega_{\delta}$ and $\Omega_{\delta}\cap B(x,\delta)$ will lead to the complete space $\mathbb{R}^N$ in the limit $\delta\to+\infty$, the set $\Omega\cap B(x,\delta)$ will eventually reach the set $\Omega$ for $\delta>0$ big enough and the sets $\partial_{\delta}\Omega$ and $\partial_{\delta}\Omega\cap B(x,\delta)$ will reach $\Omega^c$ in the limit $\delta\to+\infty$. In fact,
\begin{equation}\label{contenido}
\mathcal{D}_{\delta_1}\subset\mathcal{D}_{\delta_2}\quad\text{ for }\delta_1<\delta_2,
\end{equation}
and\footnote{This convergence can be understood in the sense of $cap(\mathcal{D}\backslash\mathcal{D}_{\delta})\to0$ as $\delta\to+\infty$, being $cap(E)=\inf \{ \|v\|_{H^1(\mathbb{R}^N)}^2 : v \in C_0^\infty(\mathbb{R}^N),\ v \geq 1 \text{ on }E\}$, the capacity of the set $E$.}
\begin{equation*}
\mathcal{D}_{\delta}\to\mathcal{D}\quad\text{ as }\delta\to+\infty.
\end{equation*} 
To continue, we recall now a Poincar\'e-type inequality.
\begin{lemma}\cite[Lemma 6.2]{BellCor}\label{poincare}
Let $\Omega\subset\mathbb{R}^N$ a bounded Lipschitz domain and let $\Omega_{D}$ a measurable set of $\Omega$ of positive measure. Let $s\in(0,1)$ and $1\leq p<\infty$. Then, there exists $C>0$ such that for all $u\in W^{s,p}(\Omega)$ with $u=0$ a.e. on $\Omega_{D}$ we have
\begin{equation*}
\|u\|_{L^p(\Omega)}\leq C\ |u|_{W^{s,p}(\Omega)}=C\left(\int_{\Omega}\int_{\Omega}\frac{|u(x)-u(y)|^p}{|x-y|^{N+sp}}dydx\right)^{\frac{1}{p}}.
\end{equation*}
\end{lemma}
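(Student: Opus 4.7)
The plan is to prove this Poincaré-type inequality by the classical contradiction-and-compactness argument. Suppose, contrary to the claim, that no such constant $C>0$ works. Then for every $n\in\mathbb{N}$ I can pick $u_n \in W^{s,p}(\Omega)$ with $u_n = 0$ a.e.\ on $\Omega_D$ satisfying $\|u_n\|_{L^p(\Omega)} > n\,|u_n|_{W^{s,p}(\Omega)}$. Normalizing, set $v_n := u_n/\|u_n\|_{L^p(\Omega)}$; then $\|v_n\|_{L^p(\Omega)} = 1$, $v_n \equiv 0$ a.e.\ on $\Omega_D$, and $|v_n|_{W^{s,p}(\Omega)} < 1/n$. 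In particular $\{v_n\}$ is bounded in $W^{s,p}(\Omega)$, since $\|v_n\|_{W^{s,p}(\Omega)}^p = 1 + |v_n|_{W^{s,p}(\Omega)}^p \leq 2$.

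Next I would invoke the compact embedding $W^{s,p}(\Omega) \hookrightarrow\hookrightarrow L^p(\Omega)$, valid on bounded Lipschitz domains, to extract a subsequence (not relabeled) converging strongly in $L^p(\Omega)$ and pointwise a.e.\ to some limit $v$ with $\|v\|_{L^p(\Omega)} = 1$ and $v = 0$ a.e.\ on $\Omega_D$. Fatou's lemma applied to the double integral defining the Gagliardo seminorm gives
$$|v|_{W^{s,p}(\Omega)}^p \leq \liminf_{n\to\infty}|v_n|_{W^{s,p}(\Omega)}^p = 0.$$

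Since the kernel $|x-y|^{-(N+sp)}$ is strictly positive on $\Omega \times \Omega$, a vanishing Gagliardo seminorm forces $v(x) = v(y)$ for a.e.\ $(x,y) \in \Omega \times \Omega$, so $v$ equals a single constant a.e.\ on $\Omega$ (even if $\Omega$ is disconnected the double integral across distinct components contributes strictly positively, so the constant is global). The constraint $v=0$ on $\Omega_D$ together with $|\Omega_D|>0$ then forces that constant to be zero, contradicting $\|v\|_{L^p(\Omega)} = 1$.

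The step I expect to be the main obstacle is the compactness: invoking $W^{s,p}(\Omega) \hookrightarrow\hookrightarrow L^p(\Omega)$ is exactly where the Lipschitz hypothesis on $\partial\Omega$ is used (via an extension/restriction argument reducing to the Rellich-type theorem on $\mathbb{R}^N$), and without it the argument would genuinely break down. The identification of the $L^p$-limit as a true constant via zero Gagliardo seminorm, while conceptually simple, also requires a small argument for disconnected $\Omega$ that is worth recording explicitly. Everything else (normalization, Fatou, extraction of a.e.\ convergent subsequence) is routine.
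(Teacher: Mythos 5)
Your proof is correct. Note that the paper itself does not prove this lemma; it simply cites \cite[Lemma 6.2]{BellCor}, so there is no in-paper proof against which to compare. That said, your argument is the standard compactness-contradiction route and is the natural one: normalize a hypothetical sequence violating the inequality, use the compact embedding $W^{s,p}(\Omega)\hookrightarrow\hookrightarrow L^p(\Omega)$ for bounded Lipschitz (hence extension) domains, pass to an a.e.\ convergent subsequence, apply Fatou to kill the Gagliardo seminorm of the limit, and conclude that the limit is a nonzero constant vanishing on a set of positive measure, a contradiction. All the individual steps check out: the normalization gives $\|v_n\|_{L^p}=1$ and $|v_n|_{W^{s,p}}<1/n$, the bound $\|v_n\|_{W^{s,p}}^p\le 2$ is correct since $\|\cdot\|_{W^{s,p}}^p=\|\cdot\|_{L^p}^p+|\cdot|_{W^{s,p}}^p$, strong $L^p$ convergence preserves both $\|v\|_{L^p}=1$ and $v=0$ a.e.\ on $\Omega_D$, and the Fatou step is legitimate once an a.e.\ convergent subsequence is extracted. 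Your remark that vanishing of the Gagliardo seminorm forces a single global constant even on disconnected $\Omega$ (because the cross-component contributions to the double integral are strictly positive) is a worthwhile detail that is often glossed over. You are also right that the only genuinely nontrivial ingredient is the compactness of $W^{s,p}(\Omega)\hookrightarrow L^p(\Omega)$, which is where the Lipschitz regularity of $\partial\Omega$ enters (via the extension property; see \cite[Theorem~7.1, Corollary~7.2]{DiNezzaPalaValdi}). One minor caveat worth flagging: the argument as stated yields only existence of $C$ without control on how it depends on $s$, $p$, $\Omega$, and $\Omega_D$; for the purposes of this paper, where $s$, $p$, and $\Omega_D$ are fixed, this is perfectly adequate.
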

Because of Proposition \ref{belcor} and Lemma \ref{poincare}, for a positive constant $c\in\mathbb{R}$,
\begin{equation}\label{normequiv2}
|v|_{\mathbb{H}^s(\Omega_{\delta})}\leq\| v \|_{\mathbb{H}^s(\Omega_{\delta})}\leq c |v |_{\mathbb{H}^s(\Omega_{\delta})},
\end{equation}
then, we can renormize the space $\mathbb{H}_0^{\delta,s}(\Omega)$ and consider it endowed with the norm
\begin{equation}\label{normHHfrac}
\vertiii{v}_{\mathbb{H}_0^{\delta,s}}^2=\int_{\Omega_{\delta}}\int_{\Omega_{\delta}\cap B(x,\delta)}\frac{|v(x)-v(y)|^2}{|x-y|^{N+2s}}dydx.
\end{equation}
As a consequence, we have the following.
\begin{lemma}\label{lemmaHilbert}
The space $\mathbb{H}_0^{\delta,s}(\Omega)$ is a Hilbert space endowed with norm $\vertiii{\cdot}_{\mathbb{H}_0^{\delta,s}}$ induced by the scalar product
\begin{equation*}
\langle u,v\rangle_{\mathbb{H}_0^{\delta,s}}=\int_{\Omega_{\delta}}\int_{\Omega_{\delta}\cap B(x,\delta)}\frac{(u(x)-u(y))(v(x)-v(y))}{|x-y|^{N+2s}}dydx.
\end{equation*}
\end{lemma}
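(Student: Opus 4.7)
The plan is to verify the three defining properties of a Hilbert space separately: (i) that $\langle\cdot,\cdot\rangle_{\mathbb{H}_0^{\delta,s}}$ is a genuine inner product on $\mathbb{H}_0^{\delta,s}(\Omega)$, (ii) that the induced quantity $\vertiii{\cdot}_{\mathbb{H}_0^{\delta,s}}$ is a norm on this subspace, and (iii) that $\mathbb{H}_0^{\delta,s}(\Omega)$ is complete with respect to this norm. Throughout the argument I will lean on the equivalence \eqref{normequiv2} and on the fact, already noted after Proposition \ref{belcor}, that $\mathbb{H}^s(\Omega_\delta)$ is isomorphic to the classical $H^s(\Omega_\delta)$, which is a Banach space.

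First I would check bilinearity and symmetry of $\langle\cdot,\cdot\rangle_{\mathbb{H}_0^{\delta,s}}$, which are immediate from linearity of the integrand in each argument and from the symmetric role played by $(u(x)-u(y))$ and $(v(x)-v(y))$. For positive definiteness, the estimate $\langle u,u\rangle_{\mathbb{H}_0^{\delta,s}} = \vertiii{u}_{\mathbb{H}_0^{\delta,s}}^2 \ge 0$ is obvious; the delicate point is that $\vertiii{u}_{\mathbb{H}_0^{\delta,s}} = 0$ forces $u \equiv 0$. Here is where the boundary condition enters: if $\vertiii{u}_{\mathbb{H}_0^{\delta,s}} = 0$ then $u(x)=u(y)$ for a.e.\ $(x,y)$ in the integration domain, and combining Proposition \ref{belcor} (which controls the full Gagliardo seminorm on $\Omega_\delta$ by the truncated one) with Lemma \ref{poincare} applied with $\Omega_D = \partial_\delta \Omega$, we conclude $\|u\|_{L^2(\Omega_\delta)} \le C\, \vertiii{u}_{\mathbb{H}_0^{\delta,s}} = 0$, hence $u = 0$ a.e.\ in $\Omega_\delta$.

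Once this is established, $\vertiii{\cdot}_{\mathbb{H}_0^{\delta,s}}$ is a norm, and by \eqref{normequiv2} it is equivalent to $\|\cdot\|_{\mathbb{H}^s(\Omega_\delta)}$ on $\mathbb{H}_0^{\delta,s}(\Omega)$. The last task is completeness. Given a Cauchy sequence $\{u_n\}$ in $(\mathbb{H}_0^{\delta,s}(\Omega),\vertiii{\cdot}_{\mathbb{H}_0^{\delta,s}})$, the equivalence of norms makes it Cauchy in $\mathbb{H}^s(\Omega_\delta)$, and by the isomorphism with $H^s(\Omega_\delta)$ there exists a limit $u \in \mathbb{H}^s(\Omega_\delta)$. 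It remains to verify that the limit still satisfies the nonlocal boundary condition, i.e.\ that $u = 0$ on $\partial_\delta \Omega$. This follows by extracting a subsequence converging a.e.\ from the $L^2(\Omega_\delta)$ convergence, since each $u_n$ vanishes a.e.\ on $\partial_\delta \Omega$; therefore $u \in \mathbb{H}_0^{\delta,s}(\Omega)$ and the convergence $\vertiii{u_n - u}_{\mathbb{H}_0^{\delta,s}} \to 0$ follows.

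The main obstacle is really the positive definiteness/Poincaré step, i.e.\ ensuring that the seminorm defined with the $\delta$-truncated kernel is in fact a norm once combined with the nonlocal boundary datum. Completeness itself is then almost formal once the norm equivalence \eqref{normequiv2} and the identification with $H^s(\Omega_\delta)$ are invoked, because closedness of $\mathbb{H}_0^{\delta,s}(\Omega)$ inside $\mathbb{H}^s(\Omega_\delta)$ reduces to passing the trivial pointwise boundary condition to the limit, which is standard from $L^2$-convergence.
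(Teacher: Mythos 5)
Your proof is correct and follows essentially the same route the paper indicates: it reduces the positive-definiteness and norm-equivalence steps to Proposition~\ref{belcor} together with the Poincar\'e inequality of Lemma~\ref{poincare}, and then argues completeness by transporting the problem to the classical space $H^s(\Omega_\delta)$ and passing the vanishing condition on $\partial_\delta\Omega$ to the limit along an a.e.\ convergent subsequence. This is precisely the ``minor modification'' of \cite[Lemma~7]{SerVal2} that the paper alludes to, only spelled out in full.
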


\begin{lemma}\label{lemmaconvergence}
Let $\{v_j\}_{j\in\mathbb{N}}$ be a bounded sequence in $\mathbb{H}_0^{\delta,s}(\Omega)$. Then, there exists $v\in L^p(\Omega)$ such that, up to a subsequence,
\begin{equation*}
v_j\to v\quad \text{in}\ L^p(\Omega),\ \text{as}\ j\to+\infty,
\end{equation*}
for any $p\in[2,2_s^*)$, where $2_s^*=\frac{2N}{N-2s}$ is the critical (fractional) Sobolev exponent.
\end{lemma}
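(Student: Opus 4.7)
The plan is to reduce the claim to the classical compact Sobolev embedding $H^s(\Omega_\delta) \hookrightarrow L^p(\Omega_\delta)$. That is, I want to show that a sequence bounded in $\mathbb{H}_0^{\delta,s}(\Omega)$ (equivalently, in the renormed seminorm \eqref{normHHfrac}) is in fact bounded in the usual fractional Sobolev space $H^s(\Omega_\delta)$, and then invoke the Rellich--Kondrachov theorem for fractional Sobolev spaces on bounded Lipschitz domains.

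First, I would apply Proposition~\ref{belcor} with the bounded Lipschitz domain $\Omega_\delta$ in place of $\Omega$. Since $\Omega_\delta \cap B(x,\delta) \subset \Omega_\delta$, this gives
\begin{equation*}
\int_{\Omega_\delta}\int_{\Omega_\delta}\frac{|v_j(x)-v_j(y)|^2}{|x-y|^{N+2s}}\,dy\,dx \;\leq\; C(\delta)\,\vertiii{v_j}_{\mathbb{H}_0^{\delta,s}}^2,
\end{equation*}
so the full Gagliardo seminorm of $v_j$ on $\Omega_\delta$ is controlled by the assumed bound on $\vertiii{v_j}_{\mathbb{H}_0^{\delta,s}}$. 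Next, since each $v_j$ vanishes on the measurable set $\partial_\delta\Omega \subset \Omega_\delta$ (which has positive Lebesgue measure), Lemma~\ref{poincare} applied with $\Omega$ replaced by $\Omega_\delta$ and $\Omega_D = \partial_\delta\Omega$ yields an $L^2(\Omega_\delta)$ bound in terms of the Gagliardo seminorm, and hence in terms of $\vertiii{v_j}_{\mathbb{H}_0^{\delta,s}}$. Combining these two inequalities gives that $\{v_j\}$ is uniformly bounded in $H^s(\Omega_\delta)$.

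Finally, I would invoke the standard compact embedding $H^s(\Omega_\delta) \hookrightarrow\hookrightarrow L^p(\Omega_\delta)$, valid for every $p \in [2, 2_s^*)$ since $\Omega_\delta$ is a bounded Lipschitz (hence extension) domain; see \cite[Corollary 7.2]{DiNezzaPalaValdi}. Extracting a subsequence furnishes $v \in L^p(\Omega_\delta)$ with $v_j \to v$ in $L^p(\Omega_\delta)$, and restricting to $\Omega \subset \Omega_\delta$ delivers the desired convergence (note that the limit automatically vanishes on $\partial_\delta\Omega$, consistent with $v \in L^p(\Omega)$).

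The main subtlety I anticipate is technical rather than conceptual: one needs $\Omega_\delta$ to be a bounded Lipschitz domain in order to use both Proposition~\ref{belcor} and the standard fractional compact embedding. This follows because $\Omega_\delta$ is the open $\delta$-neighbourhood of a bounded Lipschitz domain, which inherits Lipschitz regularity of the boundary; once this is recorded, the rest of the argument is a direct chain of previously established inequalities.
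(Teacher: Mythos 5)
Your proof is correct and follows the same route the paper intends: use Proposition~\ref{belcor} to pass from the restricted semi-norm $\vertiii{\cdot}_{\mathbb{H}_0^{\delta,s}}$ to the full Gagliardo semi-norm on $\Omega_\delta$, use Lemma~\ref{poincare} (with $\Omega_D=\partial_\delta\Omega$) to get the $L^2$ bound, conclude uniform boundedness in $H^s(\Omega_\delta)$, and then invoke the compact fractional Sobolev embedding and restrict to $\Omega$. The paper simply defers to \cite[Lemma~8]{SerVal2} after citing the same two ingredients, which is exactly the argument you reconstructed.
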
 
Thanks to Proposition \ref{belcor} and Lemma \ref{poincare}, the proofs of Lemma \ref{lemmaHilbert} and Lemma \ref{lemmaconvergence} follow similarly as to the case involving the standard fractional Laplacian $(-\Delta)_{\infty}^s$ and the space $\mathcal{H}_0^s(\Omega)$, (cf. \cite[Lemma 7]{SerVal2} and \cite[Lemma 8]{SerVal2} respectively).\newline
Next, to study convergence phenomena when one takes $\delta\to+\infty$, it will be essential to study the relation between the spaces $\mathcal{H}_0^s(\Omega)$ and $\mathbb{H}_0^{\delta,s}(\Omega)$. To that end, we show the following, whose proof is deferred to Section \ref{horizoninfty}.
\begin{lemma}\label{isomorfia} For any $\delta>0$, the spaces $\mathbb{H}_0^{\delta,s}(\Omega)$ and $\mathcal{H}_0^s(\Omega)$ are isomorphic. In particular, there exists a constant $C=C(\delta)>1$ such that 
\begin{equation*}
\vertiii{\cdot}_{\mathbb{H}_0^{\delta,s}}^2\leq\vertiii{\cdot}_{\mathcal{H}_0^s}^2\leq C(\delta)\vertiii{\cdot}_{\mathbb{H}_0^{\delta,s}}^2\quad\text{for all }\delta>0.
\end{equation*}
Moreover, $C(\delta)\to 1$ as $\delta \to +\infty$. 
\end{lemma}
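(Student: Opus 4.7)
The identification between the two spaces will go through extension by zero: every $v\in\mathbb{H}_0^{\delta,s}(\Omega)$ vanishes on $\partial_\delta\Omega$, which together with the already-prescribed domain of definition allows us to extend $v$ by zero to all of $\mathbb{R}^N$, yielding a candidate element of $\mathcal{H}_0^s(\Omega)$. The inverse is the obvious restriction. The plan is therefore to (i) verify that extension-by-zero carries $\mathbb{H}_0^{\delta,s}(\Omega)$ into $\mathcal{H}_0^s(\Omega)$ and that restriction sends $\mathcal{H}_0^s(\Omega)$ into $\mathbb{H}_0^{\delta,s}(\Omega)$, and (ii) prove the two-sided inequality for the triple-bar norms, which in view of \eqref{normHfrac} and \eqref{normHHfrac} amounts to comparing the integrals over $\mathcal{D}$ and over $\mathcal{D}_\delta$ for a function vanishing on $\Omega^c$.

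The lower bound is immediate from the inclusion $\mathcal{D}_\delta\subset\mathcal{D}$ recorded in \eqref{contenido}: indeed for $v$ extended by zero one has $v(x)-v(y)$ the same on both domains, and since $v\equiv 0$ on $\Omega^c\supset\partial_\delta\Omega$, the excluded piece $\partial_\delta\Omega\times(\partial_\delta\Omega\cap B(x,\delta))$ contributes nothing anyway. For the upper bound I will split the difference $\vertiii{v}_{\mathcal{H}_0^s}^2-\vertiii{v}_{\mathbb{H}_0^{\delta,s}}^2$ according to whether $x,y$ lie in $\Omega$ or $\Omega^c$. The key geometric observation is that if $x\in\Omega$, $y\in\Omega^c$ and $|x-y|<\delta$ then $y\in\partial_\delta\Omega$ by definition, so all \emph{short-range} cross interactions are already accounted for in $\mathcal{D}_\delta$. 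Consequently the residual pieces reduce to
\[
\iint_{\Omega\times\Omega,\,|x-y|\geq\delta}\frac{|v(x)-v(y)|^2}{|x-y|^{N+2s}}\,dy\,dx
+2\int_\Omega v(x)^2\int_{\Omega^c\cap\{|y-x|\geq\delta\}}\frac{dy}{|x-y|^{N+2s}}\,dx,
\]
both of which are bounded via the elementary estimate $\int_{|z|\geq\delta}|z|^{-N-2s}\,dz=\omega_{N-1}/(2s\delta^{2s})$ by a constant multiple of $\delta^{-2s}\|v\|_{L^2(\Omega)}^2$.

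It remains to control $\|v\|_{L^2(\Omega)}^2$ by $\vertiii{v}_{\mathbb{H}_0^{\delta,s}}^2$. Since $v$ extended by zero vanishes on $\partial_\delta\Omega\subset\Omega_\delta$, I apply the Poincaré inequality of Lemma \ref{poincare} on $\Omega_\delta$ with $\Omega_D=\partial_\delta\Omega$, combined with Proposition \ref{belcor}, obtaining $\|v\|_{L^2(\Omega)}\leq C_P(\delta)\vertiii{v}_{\mathbb{H}_0^{\delta,s}}$. Plugging back yields the desired bound with $C(\delta)=1+C'\,C_P(\delta)\,\delta^{-2s}$.

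The delicate part is verifying that $C(\delta)\to 1$ as $\delta\to+\infty$, since a priori the Poincaré constant $C_P(\delta)$ could deteriorate. To handle this I exploit the monotonicity \eqref{contenido}: fix any $\delta_0>0$; then for every $\delta\geq\delta_0$ the energy $\vertiii{v}_{\mathbb{H}_0^{\delta,s}}^2$ dominates $\vertiii{v}_{\mathbb{H}_0^{\delta_0,s}}^2$, so one may take the single constant $C_P(\delta_0)$ uniformly for all such $\delta$. With $C_P$ uniformly bounded and $\delta^{-2s}\to 0$, the factor $C(\delta)$ indeed tends to $1$. The main technical hurdle is thus precisely this uniformity argument for the Poincaré constant at large horizons; everything else is a direct bookkeeping of kernel-integrals and the set-theoretic decomposition of $\mathcal{D}\setminus\mathcal{D}_\delta$ into a long-range part inside $\Omega\times\Omega$ and a long-range part inside $\Omega\times\Omega^c$.
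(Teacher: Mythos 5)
Your proof is correct and follows essentially the same route as the paper's: split $\vertiii{v}_{\mathcal{H}_0^s}^2-\vertiii{v}_{\mathbb{H}_0^{\delta,s}}^2$ into the long-range contributions, bound the excess by $C\delta^{-2s}\|v\|_{L^2(\Omega)}^2$, absorb the $L^2$-norm via a Poincar\'e-type bound, and obtain $C(\delta)\to 1$ from the monotonicity $\mathcal{D}_{\delta_0}\subset\mathcal{D}_\delta$. The only cosmetic difference is that you invoke Lemma~\ref{poincare} together with Proposition~\ref{belcor}, whereas the paper uses the variational characterization of $\lambda_1^{\delta,s}$ (which plays the role of the optimal Poincar\'e constant, and whose monotonicity in $\delta$ is exactly the uniformity you argue for).
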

Let us notice that Lemma \ref{isomorfia} implies the following convergence of spaces,
\begin{equation*}
\mathbb{H}_0^{\infty,s}(\Omega)\vcentcolon=\lim\limits_{\delta\to\infty}\mathbb{H}_0^{\delta,s}(\Omega)\equiv \mathcal{H}_0^s(\Omega).
\end{equation*}
Now we make precise the definition of weak solution of problem $(P_\delta^s)$.
\begin{definition}
We say that $u\in\mathbb{H}_0^{\delta,s}(\Omega)$ is a weak solution to problem \eqref{problema} if
\begin{equation*}
\frac{c_{N,s}}{2}\int_{\Omega_{\delta}}\int_{\Omega_{\delta}\cap B(x,\delta)}\frac{(u(x)-u(y))(v(x)-v(y))}{|x-y|^{N+2s}}dydx=\int_{\Omega}f(x)v(x)dx,
\end{equation*}
for all $v\in \mathbb{H}_0^{\delta,s}(\Omega)$, or written in a more compact way,
\begin{equation*}
\frac{c_{N,s}}{2}\langle u,v\rangle_{\mathbb{H}_0^{\delta,s}}=\langle f,v\rangle_{L^2(\Omega)},\quad\text{ for all } v \in \mathbb{H}_0^{\delta,s}(\Omega).
\end{equation*}
\end{definition}
Once we have introduced the functional setting we continue with some existence results dealing with the operator $(-\Delta)_{\delta}^s$.
\begin{theorem}\label{thlineal}
Given an horizon $\delta>0$ and a reaction term $f\in L^2(\Omega)$ there exists a unique solution $u^{\delta,s}$ to problem \eqref{problema}. Moreover, such a solution is the unique minimizer of the energy functional associated to \eqref{problema}, i.e.,
\begin{equation*}
\mathcal{J}_{\delta,s}(u^{\delta,s})=\min\limits_{v\in \mathbb{H}_0^{\delta,s}(\Omega)}\mathcal{J}_{\delta,s}(v)=\min\limits_{v\in \mathbb{H}_0^{\delta,s}(\Omega)}\left\{\frac{c_{N,s}}{4}\vertiii{v}_{\mathbb{H}_0^{\delta,s}}^2-\int_{\Omega}fv \, dx\right\}.
\end{equation*}
\end{theorem}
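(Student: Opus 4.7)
The approach is the standard Hilbert space / direct method argument: the bilinear form appearing in the weak formulation is a scalar product (up to the constant $c_{N,s}/2$) that induces an equivalent norm on $\mathbb{H}_0^{\delta,s}(\Omega)$, so existence and uniqueness reduce to the Riesz representation theorem, and the minimization characterization follows from symmetry of the bilinear form.

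More concretely, I would proceed in three short steps. First, introduce
\[
a(u,v)=\frac{c_{N,s}}{2}\,\langle u,v\rangle_{\mathbb{H}_0^{\delta,s}},\qquad L(v)=\int_{\Omega}f(x)v(x)\,dx,
\]
and observe that, by Lemma \ref{lemmaHilbert}, $a$ is a bounded, symmetric, coercive bilinear form on the Hilbert space $\bigl(\mathbb{H}_0^{\delta,s}(\Omega),\vertiii{\cdot}_{\mathbb{H}_0^{\delta,s}}\bigr)$, with continuity and coercivity constants both equal to $c_{N,s}/2$ for the renormalized norm \eqref{normHHfrac}. For $L$, the continuity requires an embedding $\mathbb{H}_0^{\delta,s}(\Omega)\hookrightarrow L^2(\Omega)$; this is given by the chain Proposition \ref{belcor} $+$ Lemma \ref{poincare} (applied with $\Omega_\delta$ and the Dirichlet set $\partial_\delta\Omega$), which yields
\[
\|v\|_{L^2(\Omega)}\le\|v\|_{L^2(\Omega_\delta)}\le C\,\vertiii{v}_{\mathbb{H}_0^{\delta,s}},
\]
so that $|L(v)|\le\|f\|_{L^2(\Omega)}\|v\|_{L^2(\Omega)}\le C\|f\|_{L^2(\Omega)}\vertiii{v}_{\mathbb{H}_0^{\delta,s}}$, i.e.\ $L\in(\mathbb{H}_0^{\delta,s}(\Omega))^\ast$.

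Second, apply the Riesz representation theorem (or Lax--Milgram) to obtain a unique $u^{\delta,s}\in\mathbb{H}_0^{\delta,s}(\Omega)$ satisfying $a(u^{\delta,s},v)=L(v)$ for every $v\in\mathbb{H}_0^{\delta,s}(\Omega)$, which is exactly the weak-solution identity in the Definition preceding the statement.

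Third, for the variational characterization, use the symmetry of $a$: for any $v\in\mathbb{H}_0^{\delta,s}(\Omega)$, writing $v=u^{\delta,s}+w$,
\[
\mathcal{J}_{\delta,s}(v)=\tfrac{1}{2}a(u^{\delta,s},u^{\delta,s})+a(u^{\delta,s},w)+\tfrac{1}{2}a(w,w)-L(u^{\delta,s})-L(w)=\mathcal{J}_{\delta,s}(u^{\delta,s})+\tfrac{1}{2}a(w,w),
\]
where the cross terms cancel because $a(u^{\delta,s},w)=L(w)$. Coercivity of $a$ gives $a(w,w)\ge 0$ with equality only for $w=0$, so $u^{\delta,s}$ is the unique minimizer and the identification $\tfrac{1}{2}a(u,u)=\tfrac{c_{N,s}}{4}\vertiii{u}_{\mathbb{H}_0^{\delta,s}}^2$ recovers the functional in the statement.

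There is no real obstacle here; the only subtle point is confirming that $L$ is continuous on $(\mathbb{H}_0^{\delta,s}(\Omega),\vertiii{\cdot}_{\mathbb{H}_0^{\delta,s}})$, for which the Poincaré-type inequality of Lemma \ref{poincare} combined with Proposition \ref{belcor} is the key input, together with the norm equivalence \eqref{normequiv2}. Everything else is the standard Hilbert-space variational machinery applied to a symmetric coercive problem.
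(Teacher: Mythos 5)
Your proposal is correct and matches the paper's own (omitted) argument: the authors state that the result is an immediate consequence of Lax--Milgram once the space is renormalized via Proposition \ref{belcor} and Lemma \ref{poincare}, which is exactly the reduction you carry out, together with the standard symmetric-form energy identity for the minimization characterization.
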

The proof of Theorem \ref{thlineal} is, thanks to the renormization of the space $\mathbb{H}_0^{\delta,s}(\Omega)$ in terms of $\vertiii{\cdot}_{\mathbb{H}_0^{\delta,s}}$ provided by Proposition \ref{belcor} and Lemma \ref{poincare}, an immediate consequence of Lax-Milgram Theorem, so we omit this proof.\newline
Next we study existence and stability issues for the eigenvalue problem 
\begin{equation}\label{eigenproblem}
     \left\{\begin{array}{rl}
     (-\Delta)_{\delta}^s\varphi=\lambda\varphi&\quad\mbox{in}\quad \Omega,\\
                         \varphi\!=0\ \          &\quad\mbox{on}\quad \partial_{\delta}\Omega.\\
\end{array}\right.
\tag{$EP_{\delta}^{s}$}
\end{equation}

\begin{proposition}\label{propeigen}
Let $\delta>0$, $s\in(0,1)$, $N>2s$ and $\Omega\subset\mathbb{R}^N$ an open bounded set with Lipschitz boundary. Then, the following hold:
\begin{enumerate}

\item Problem \eqref{eigenproblem} has a first positive eigenvalue that can be characterized as
\begin{equation}\label{minlambda1}
\lambda_{1}^{\delta,s}=\min\limits_{\substack{ u\in \mathbb{H}_0^{\delta,s}(\Omega)\\ \|u\|_{L^2(\Omega)}=1}}\frac{c_{N,s}}{2}\int_{\Omega_{\delta}}\int_{\Omega_{\delta}\cap B(x,\delta)}\frac{|u(x)-u(y)|^2}{|x-y|^{N+2s}}dydx,
\end{equation}
or, equivalenty,
\begin{equation*}
\lambda_{1}^{\delta,s}=\min\limits_{\substack{ u\in \mathbb{H}_0^{\delta,s}(\Omega)}}\frac{c_{N,s}}{2}\frac{\displaystyle\int_{\Omega_{\delta}}\int_{\Omega_{\delta}\cap B(x,\delta)}\frac{|u(x)-u(y)|^2}{|x-y|^{N+2s}}dydx}{\displaystyle\int_{\Omega}|u(x)|^2dx}.
\end{equation*}
Moreover, there exists a nonnegative function $\varphi_1^{\delta,s}\in\mathbb{H}_0^{\delta,s}(\Omega)$, which is an eigenfunction corresponding to $\lambda_1^{\delta,s}$, attaining the minimum in \eqref{minlambda1}, i.e., $\|\varphi_1^{\delta,s}\|_{L^2(\Omega)}=1$ and
\begin{equation}\label{minfirsteigenfunction}
\lambda_{1}^{\delta,s}=\frac{c_{N,s}}{2}\int_{\Omega_{\delta}}\int_{\Omega_{\delta}\cap B(x,\delta)}\frac{|\varphi_1^{\delta,s}(x)-\varphi_1^{\delta,s}(y)|^2}{|x-y|^{N+2s}}dydx.
\end{equation}
In addition, the first eigenvalue $\lambda_1^{\delta,s}$ is simple, i.e., if $\psi\in\mathbb{H}_0^{\delta,s}(\Omega)$ is such that, for all $\phi\in\mathbb{H}_0^{\delta,s}(\Omega)$,
\begin{equation}\label{simple}
\frac{c_{N,s}}{2}\langle \psi,\phi\rangle_{\mathbb{H}_0^{\delta,s}}=\lambda_1^{\delta,s}\int_{\Omega}\psi(x)\phi(x)dx,
\end{equation} 
then, $\psi=c\,\varphi_1^{\delta,s}$ with $c\in\mathbb{R}$.

\item The eigenvalues of \eqref{eigenproblem} are a countable set $\{\lambda_{k}^{\delta,s}\}_{k\in\mathbb{N}}$ satisfying
\begin{equation}\label{increasingeigenvalues}
0<\lambda_1^{\delta,s}<\lambda_2^{\delta,s}\leq\ldots\leq\lambda_k^{\delta,s}\leq\ldots
\end{equation}
and 
\begin{equation}\label{eigenvaluetoinfty}
\lambda_k^{\delta,s}\to +\infty\qquad\text{as } k\to +\infty.
\end{equation}
Furthermore, for any $k\in\mathbb{N},\ k\geq2$ the eigenvalues can be characterized as 
\begin{equation}\label{minlambdak}
\lambda_{k}^{\delta,s}=\min\limits_{\substack{ u\in \mathbb{P}_{k}^{\delta}\\ \|u\|_{L^2(\Omega)}=1}}\frac{c_{N,s}}{2}\int_{\Omega_{\delta}}\int_{\Omega_{\delta}\cap B(x,\delta)}\frac{|u(x)-u(y)|^2}{|x-y|^{N+2s}}dydx,
\end{equation}
or, equivalently,
\begin{equation*}
\lambda_{k}^{\delta,s}=\min\limits_{\substack{ u\in \mathbb{P}_{k}^{\delta}}}\frac{c_{N,s}}{2}\frac{\displaystyle\int_{\Omega_{\delta}}\int_{\Omega_{\delta}\cap B(x,\delta)}\frac{|u(x)-u(y)|^2}{|x-y|^{N+2s}}dydx}{\displaystyle\int_{\Omega}|u(x)|^2dx},
\end{equation*}
where
\begin{equation}\label{projectspace}
\mathbb{P}_{k}^{\delta}=\{u\in\mathbb{H}_0^{\delta,s}(\Omega):\langle u,\varphi_j^{\delta,s}\rangle_{\mathbb{H}_0^{\delta,s}}=0,\ j=1,\ldots,k-1\}.
\end{equation}
In addition, for any $k\in\mathbb{N},\ k\geq2$, there exists a function $\varphi_k^{\delta,s}\in \mathbb{P}_{k}^{\delta}$, which is an eigenfunction corresponding to $\lambda_k^{\delta,s}$, attaining the minimum in \eqref{minlambdak}, i.e., $\|\varphi_k^{\delta,s}\|_{L^2(\Omega)}=1$ and 
\begin{equation}\label{minksteigenfunction}
\lambda_k^{\delta,s}=\frac{c_{N,s}}{2}\int_{\Omega_{\delta}}\int_{\Omega_{\delta}\cap B(x,\delta)}\frac{|\varphi_k^{\delta,s}(x)-\varphi_k^{\delta,s}(y)|^2}{|x-y|^{N+2s}}dydx.
\end{equation}

\item The set of eigenfunctions $\{\varphi_k^{\delta,s}\}_{k\in\mathbb{N}}$ is an orthogonal basis of $\mathbb{H}_0^{\delta,s}(\Omega)$ and an orthonormal basis of $L^2(\Omega)$.

\item For any $k\in\mathbb{N}$, the eigenvalue $\lambda_k^{\delta,s}$ has finite multiplicity, i.e., if $\lambda_k^{\delta,s}$ satisfies
\begin{equation}\label{multiplicity}
\lambda_{k-1}^{\delta,s}<\lambda_k^{\delta,s}=\ldots=\lambda_{k+n}^{\delta,s}<\lambda_{k+n+1}^{\delta,s},
\end{equation}
for some $n\in\mathbb{N},\ n\geq1$, then the set of all the eigenfunctions corresponding to $\lambda_k^{\delta,s}$ belongs to
\begin{equation*}
span\{\varphi_k^{\delta,s},\ldots,\varphi_{k+n}^{\delta,s}\}.
\end{equation*}
In other words, denoting by $m_k^{\delta,s}$ the multiplicity of the eigenvalue $\lambda_k^{\delta,s}$, then
\begin{equation}\label{finitemultiplicity}
1\leq m_k^{\delta,s}<\infty,\quad\text{for all } k\in\mathbb{N}.
\end{equation}
\end{enumerate}
\end{proposition}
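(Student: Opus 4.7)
The plan is to combine the direct method of the calculus of variations with the compact embedding from Lemma \ref{lemmaconvergence} and, for the basis statement, the spectral theorem for compact self-adjoint operators. The analytic inputs I will use throughout are the Hilbert structure from Lemma \ref{lemmaHilbert}, the norm renormization \eqref{normequiv2} coming from Proposition \ref{belcor} and Lemma \ref{poincare}, the compactness of the embedding $\mathbb{H}_0^{\delta,s}(\Omega)\hookrightarrow L^2(\Omega)$, and the symmetry of the bilinear form $\langle\cdot,\cdot\rangle_{\mathbb{H}_0^{\delta,s}}$.

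For item (1), I would attack the minimization \eqref{minlambda1} by taking a minimizing sequence $\{u_j\}\subset\mathbb{H}_0^{\delta,s}(\Omega)$ with $\|u_j\|_{L^2(\Omega)}=1$. Boundedness of $\vertiii{u_j}_{\mathbb{H}_0^{\delta,s}}$ together with reflexivity and Lemma \ref{lemmaconvergence} produces a subsequence with $u_j\rightharpoonup\varphi_1^{\delta,s}$ weakly in $\mathbb{H}_0^{\delta,s}(\Omega)$ and strongly in $L^2(\Omega)$; weak lower semicontinuity of the norm then identifies $\varphi_1^{\delta,s}$ as a minimizer, and a standard Lagrange multiplier computation shows it satisfies \eqref{eigenproblem} with $\lambda=\lambda_1^{\delta,s}>0$ (positivity via Lemma \ref{poincare}). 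Since $\bigl||u(x)|-|u(y)|\bigr|\leq |u(x)-u(y)|$, replacing $\varphi_1^{\delta,s}$ by $|\varphi_1^{\delta,s}|$ preserves the $L^2$ constraint and cannot increase the energy, so the eigenfunction may be chosen nonnegative. Simplicity follows by first establishing a strong maximum principle for $(-\Delta)_\delta^s$—which, thanks to the nonnegative kernel representation \eqref{seconddiff} and the monotonicity property noted after it, proceeds as for $(-\Delta)_\infty^s$—to conclude $\varphi_1^{\delta,s}>0$ in $\Omega$; then, for any $\psi$ satisfying \eqref{simple}, $|\psi|$ is itself a minimizer forcing $\psi$ not to change sign, and setting $w=\psi-t\varphi_1^{\delta,s}$ with $t$ chosen so that $w\geq 0$ has an interior zero, the strong maximum principle applied to the eigenfunction $w$ yields $w\equiv 0$.

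For item (2), I would construct $\{\varphi_k^{\delta,s}\}_{k\geq 2}$ inductively: given the first $k-1$ eigenfunctions, $\mathbb{P}_k^\delta$ defined in \eqref{projectspace} is a closed subspace of $\mathbb{H}_0^{\delta,s}(\Omega)$, and the same direct method applied to the Rayleigh quotient restricted to $\mathbb{P}_k^\delta$ yields a minimizer $\varphi_k^{\delta,s}$. The Euler--Lagrange condition, combined with the $\mathbb{H}_0^{\delta,s}$-orthogonality against previous eigenfunctions, forces $\varphi_k^{\delta,s}$ to be an eigenfunction for $\lambda_k^{\delta,s}\geq\lambda_{k-1}^{\delta,s}$, and the strict inequality $\lambda_1^{\delta,s}<\lambda_2^{\delta,s}$ is inherited from simplicity. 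Divergence \eqref{eigenvaluetoinfty} follows by contradiction: if $\{\lambda_k^{\delta,s}\}$ were bounded, the $L^2$-orthonormal sequence $\{\varphi_k^{\delta,s}\}$ would be bounded in $\mathbb{H}_0^{\delta,s}(\Omega)$, and Lemma \ref{lemmaconvergence} would yield an $L^2$-Cauchy subsequence, contradicting $\|\varphi_k^{\delta,s}-\varphi_j^{\delta,s}\|_{L^2(\Omega)}^2=2$ for $k\neq j$.

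For items (3) and (4), the cleanest route is through the solution operator $T\colon L^2(\Omega)\to L^2(\Omega)$ defined by $Tf=u$, with $u$ the unique solution of \eqref{problema} provided by Theorem \ref{thlineal}. Symmetry of $\langle\cdot,\cdot\rangle_{\mathbb{H}_0^{\delta,s}}$ makes $T$ self-adjoint and positive on $L^2(\Omega)$; continuity of $T\colon L^2(\Omega)\to\mathbb{H}_0^{\delta,s}(\Omega)$ composed with Lemma \ref{lemmaconvergence} makes it compact. The spectral theorem then supplies an $L^2$-orthonormal basis of eigenfunctions with nonzero eigenvalues $1/\lambda_k^{\delta,s}$ accumulating only at $0$ and each associated eigenspace finite-dimensional, giving \eqref{finitemultiplicity}; $\mathbb{H}_0^{\delta,s}$-orthogonality then follows by using $\varphi_j^{\delta,s}$ as test function in the weak formulation for $\varphi_k^{\delta,s}$ and invoking the $L^2$-orthogonality. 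The hardest step I anticipate is the strong maximum principle underlying the simplicity argument in (1): because only interactions at distance less than $\delta$ enter \eqref{defLaplacian}, propagation of the zero set must be established by iterating over chains of balls of radius comparable to $\delta$ inside the connected domain $\Omega$, rather than by a single application of the global nonlocal kernel as for $(-\Delta)_\infty^s$.
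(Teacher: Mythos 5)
Your proposal follows the same variational/spectral scheme that the paper itself invokes: it delegates the whole proof to \cite[Proposition 9]{SerVal} and \cite[Proposition 4]{Servadei2013} for $(-\Delta)_\infty^s$, with the direct method and the compact embedding (Lemma~\ref{lemmaconvergence}) giving the Rayleigh-quotient characterizations and the compact self-adjoint solution operator plus the spectral theorem giving items (3)--(4). So the architecture is the one intended, and those parts transfer with no surprises. What you correctly single out, and what the paper's remark about ``minor modifications'' understates, is simplicity of $\lambda_1^{\delta,s}$. For the global kernel, $\vertiii{|u|}_{\mathbb{H}_0^{\delta,s}}^2<\vertiii{u}_{\mathbb{H}_0^{\delta,s}}^2$ holds \emph{strictly} for any sign-changing $u$, and a nonnegative eigenfunction with an essential zero is killed on all of $\mathbb{R}^N$ in a single evaluation of the equation. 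For the truncated kernel both fail when $\{u>0\}$ and $\{u<0\}$ lie at mutual distance $\geq\delta$: the cross term $\langle u^+,u^-\rangle_{\mathbb{H}_0^{\delta,s}}$ vanishes, so $\vertiii{|u|}^2=\vertiii{u}^2$ can hold for sign-changing $u$, and testing at a zero only gives vanishing on a ball of radius $\delta$. Your chain-of-balls propagation is the correct repair of the maximum-principle step, but note two things. First, it requires $\Omega$ to be connected --- a hypothesis the Proposition does not state, and without which simplicity is actually false (two congruent components at mutual distance $>2\delta$ decouple completely, giving $m_1^{\delta,s}=2$); the statement should be restricted to connected $\Omega$. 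Second, the sentence ``$|\psi|$ is a minimizer forcing $\psi$ not to change sign'' is not a one-liner here for the same reason: in the $\delta$-separated case one instead observes that $\psi^{+}$ and $\psi^{-}$ are then each minimizers with disjoint supports and applies the chained maximum principle to each, obtaining a contradiction. With those two points made explicit, your argument is complete and consistent with the references the paper cites.
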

\begin{lemma}\label{boundinftyeigen}
Let $\varphi_k^{\delta,s}\in\mathbb{H}_0^{\delta,s}(\Omega)$ be an eigenfunction of \eqref{eigenproblem}, then $\varphi_k^{\delta,s}\in L^{\infty}(\Omega)$ for any $k\in\mathbb{N}$.
\end{lemma}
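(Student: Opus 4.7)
My plan is to establish Lemma \ref{boundinftyeigen} by a Moser-type iteration, mirroring the argument standard for the fractional Laplacian case (compare e.g.\ \cite{SerVal2}). The starting point is that by Lemma \ref{isomorfia} together with the critical Sobolev embedding \(\mathcal{H}_0^s(\Omega) \hookrightarrow L^{2_s^*}(\Omega)\) recalled in Section \ref{functionalsetting}, the space \(\mathbb{H}_0^{\delta,s}(\Omega)\) embeds continuously into \(L^{2_s^*}(\Omega)\); in particular \(\varphi := \varphi_k^{\delta,s} \in L^{2_s^*}(\Omega)\). The goal is then to bootstrap this integrability up to \(L^\infty\).

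For parameters \(M > 0\) and \(\beta \geq 0\), set \(\varphi_M := \min\{|\varphi|, M\}\) and use the admissible test function \(v := \varphi\, \varphi_M^{2\beta}\), which lies in \(\mathbb{H}_0^{\delta,s}(\Omega)\) since it is a globally Lipschitz function of \(\varphi\) vanishing on \(\partial_\delta \Omega\). The right-hand side of the weak formulation of \eqref{eigenproblem} becomes \(\lambda \int_\Omega \varphi^2 \varphi_M^{2\beta}\, dx\), while the left-hand side is controlled from below by
\[
\frac{c_{N,s}}{2(\beta+1)^2}\, \vertiii{\varphi\, \varphi_M^\beta}_{\mathbb{H}_0^{\delta,s}}^2,
\]
via a pointwise algebraic inequality of the form
\[
(a-b)\bigl(a\, a_M^{2\beta} - b\, b_M^{2\beta}\bigr) \geq \frac{1}{(\beta+1)^2}\bigl(a\, a_M^\beta - b\, b_M^\beta\bigr)^2,
\]
valid for all \(a, b \in \mathbb{R}\) (with \(a_M := \mathrm{sgn}(a)\min\{|a|, M\}\)) combined with the nonnegativity of the kernel \(|x-y|^{-(N+2s)}\). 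Invoking the Sobolev embedding then yields
\[
\|\varphi\, \varphi_M^\beta\|_{L^{2_s^*}(\Omega)}^2 \leq C\,(\beta+1)^2\, \lambda \int_\Omega \varphi^2 \varphi_M^{2\beta}\, dx,
\]
for some constant \(C = C(N, s, \delta) > 0\).

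Once \(\varphi \in L^{2(\beta+1)}(\Omega)\) is known, monotone convergence as \(M \to \infty\) promotes the previous display to
\[
\|\varphi\|_{L^{2_s^*(\beta+1)}(\Omega)}^{2(\beta+1)} \leq C\,(\beta+1)^2\, \lambda\, \|\varphi\|_{L^{2(\beta+1)}(\Omega)}^{2(\beta+1)}.
\]
Setting \(\chi := 2_s^*/2 > 1\) and \(p_n := 2_s^*\, \chi^n\), iterating this inequality starting from \(p_0 = 2_s^*\) produces a telescoping product that remains bounded as \(n \to \infty\), so that \(\|\varphi\|_{L^{p_n}(\Omega)} \leq K\, \|\varphi\|_{L^{2_s^*}(\Omega)}\) uniformly in \(n\); passing to the limit yields \(\varphi \in L^\infty(\Omega)\). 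The main technical obstacle is the verification of the pointwise algebraic inequality with truncations, together with the careful handling of sign changes of \(\varphi\) (necessary for \(k \geq 2\)) and the passage to the limit \(M \to \infty\) at each step of the iteration. Once these ingredients are in place, the restriction of interactions to \(B(x,\delta)\) plays no essential role: the argument only relies on the nonnegativity and singularity of the kernel and on the embedding \(\mathbb{H}_0^{\delta,s}(\Omega) \hookrightarrow L^{2_s^*}(\Omega)\), both of which are available.
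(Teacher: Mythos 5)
Your Moser-type iteration is precisely the argument the paper invokes: the paper's ``proof'' of Lemma \ref{boundinftyeigen} is to cite \cite[Proposition 4]{Servadei2013} and note that its truncation-and-bootstrap scheme carries over verbatim once the embedding $\mathbb{H}_0^{\delta,s}(\Omega)\hookrightarrow L^{2_s^*}(\Omega)$ is secured (the paper suggests Proposition \ref{belcor} combined with the fractional Sobolev inequality; you reach the same embedding via Lemma \ref{isomorfia}, an immaterial difference). Your choice of test function $\varphi\varphi_M^{2\beta}$, the pointwise algebraic inequality providing the coercivity with the factor $(\beta+1)^{-2}$, the passage $M\to\infty$ by monotone convergence, and the convergent telescoping product in the exponent $\chi = 2_s^*/2$ all match that reference, so the proposal is correct and essentially the same as the paper's intended proof.
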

The proofs of Proposition \ref{propeigen} and Lemma \ref{boundinftyeigen} can be also done similarly as to the case dealing with the standard fractional Laplacian $(-\Delta)_{\infty}^s$ and the space $\mathcal{H}_0^s(\Omega)$. Indeed, up to minor modifications involving the joint use of Proposition \ref{belcor} and the fractional Sobolev inequality (cf.\cite[Theorem 6.5]{DiNezzaPalaValdi}), the proofs can be done following step by step the proofs of \cite[Proposition 9]{SerVal} and \cite[Proposition 4]{Servadei2013} respectively, so we omit the details for the sake of brevity.

We present now the main results of the work, dealing with the behavior of \eqref{problema} and \eqref{eigenproblem} when the horizon $\delta\to0^+$ and $\delta\to+\infty$. To that end, let us consider the following problems.
\begin{equation}\label{problemareescaled}
     \left\{\begin{array}{cl}
     (-\Delta)_{\delta}^{s}u=\frac{\delta^{2(1-s)}}{\kappa(N,s)}f &\quad\mbox{in}\quad \Omega,\\
              u=0 &\quad\mbox{on}\quad \partial_{\delta}\Omega,\\
		 \end{array}\right.
				\tag{$RP_{\delta}^{s}$}
\end{equation}
where 
\begin{equation}\label{constantkappa}
\kappa(N,s)=\frac{4N(1-s)}{\sigma_{N-1}c_{N,s}},
\end{equation}
with $\sigma_{N-1}$ the surface of the unitary sphere $\mathbb{S}^{N-1}$. As the limit $\delta\to0^+$ encodes a concentration phenomena, the rescaling on the reaction term $f$ of \eqref{problema}, namely $\frac{\delta^{2(1-s)}}{\kappa(N,s)}f$ in \eqref{problemareescaled}, is needed in order to avoid the degeneracy in the limit process.

\begin{remark} 
The constant $\kappa(N,s)$ is well defined when $s\to 1^-$. Indeed, cf. \cite{Bucur,Stinga}, we have 
\begin{equation}\label{limitecns}
c_{N,s}=\frac{2^{2s}s\Gamma(\frac{N}{2}+s)}{\pi^{\frac{N}{2}}\Gamma(1-s)}\to0\quad\text{as }s\to1^-,
\end{equation}
and, (cf. \cite[Corollary 4.2]{DiNezzaPalaValdi}),
\begin{equation}\label{limitecns2}
\lim\limits_{s\to 1^-}\frac{c_{N,s}}{(1-s)}=\lim\limits_{s\to 1^-}\frac{2^{2s}s\Gamma(\frac{N}{2}+s)}{\pi^{\frac{N}{2}}\Gamma(2-s)}=\frac{4\Gamma(\frac{N}{2}+1)}{\pi^{\frac{N}{2}}}=\frac{4N}{\frac{2\pi^{\frac{N}{2}}}{\Gamma(\frac{N}{2})}}=\frac{4N}{\sigma_{N-1}}.
\end{equation}
As a consequence,  
\begin{equation}\label{limitkapa}
\lim\limits_{s\to 1^-}\kappa(N,s)=1.
\end{equation}
\end{remark}
\noindent Next, let us consider the classical local linear problem,
\begin{equation}\label{problema0}
     \left\{\begin{array}{rl}
     (-\Delta)u=f &\quad\mbox{in}\quad \Omega,\\
              u=0 &\quad\mbox{on}\quad \partial\Omega,\\
		 \end{array}\right.
				\tag{$P_{0}^{1}$}
\end{equation}
and the linear problem driven by the standard fractional Laplacian,
\begin{equation}\label{problemainf}
     \left\{\begin{array}{rl}
     (-\Delta)_{\infty}^su=f &\quad\mbox{in}\quad \Omega,\\
                         u=0 &\quad\mbox{on}\quad \partial_{\infty}\Omega\equiv\mathbb{R}^{N}\backslash\Omega,\\
		 \end{array}\right.
				\tag{$P_{\infty}^{s}$}
\end{equation}
with $(-\Delta)_{\infty}^s$ defined in \eqref{fracclaplacian}. Our main results as regarding the relationship between the linear problems \eqref{problema}, \eqref{problemareescaled}, \eqref{problema0} and \eqref{problemainf} are the following.

\begin{theorem}\label{linear0}
Let $u^{\delta,s}$ and $u^{0,1}$ be the solutions of \eqref{problemareescaled} and \eqref{problema0} respectively. Then, up to a subsequence,
\begin{equation*}
u^{\delta,s}\to u^{0,1}\ \text{in }L^2(\Omega)\quad\text{as }\delta\to0^+. 
\end{equation*}
\end{theorem}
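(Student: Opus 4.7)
By Theorem \ref{thlineal} applied with the reaction term $\tfrac{\delta^{2(1-s)}}{\kappa(N,s)}f$, the solution $u^{\delta,s}$ of \eqref{problemareescaled} is the unique minimizer over $\mathbb{H}_0^{\delta,s}(\Omega)$ of $\tfrac{c_{N,s}}{4}\vertiii{v}_{\mathbb{H}_0^{\delta,s}}^2-\tfrac{\delta^{2(1-s)}}{\kappa(N,s)}\int_\Omega fv\,dx$. Dividing by the positive factor $\tfrac{\delta^{2(1-s)}}{\kappa(N,s)}$ leaves the minimizer unchanged, so $u^{\delta,s}$ also minimizes
\begin{equation*}
\widetilde{\mathcal{J}}_{\delta,s}(v)=\frac{c_{N,s}\kappa(N,s)}{4\delta^{2(1-s)}}\vertiii{v}_{\mathbb{H}_0^{\delta,s}}^2-\int_\Omega fv\,dx,
\end{equation*}
extended to $L^2(\Omega)$ by the value $+\infty$ outside $\mathbb{H}_0^{\delta,s}(\Omega)$. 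Proving the theorem thus reduces to a statement about convergence of these minimizers.

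The normalization constant $\kappa(N,s)$ in \eqref{constantkappa} is chosen precisely so that, for smooth $v$, a Taylor expansion together with the identity $\int_{\mathbb{S}^{N-1}}|a\cdot\omega|^2 d\omega=\tfrac{\sigma_{N-1}}{N}|a|^2$ gives $\tfrac{c_{N,s}\kappa(N,s)}{4\delta^{2(1-s)}}\vertiii{v}_{\mathbb{H}_0^{\delta,s}}^2\to\tfrac{1}{2}\int_\Omega|\nabla v|^2\,dx$. I would therefore invoke the general Bourgain--Brezis--Mironescu type $\Gamma$-convergence theorem of \cite{BellCorPed}, to be recalled in Section \ref{preliminaryresults}, to conclude that $\widetilde{\mathcal{J}}_{\delta,s}\xrightarrow{\Gamma}\widetilde{\mathcal{J}}_{0,1}$ in the $L^2(\Omega)$-topology as $\delta\to 0^+$, where $\widetilde{\mathcal{J}}_{0,1}(v)=\tfrac{1}{2}\int_\Omega|\nabla v|^2\,dx-\int_\Omega fv\,dx$ for $v\in H_0^1(\Omega)$ and $+\infty$ elsewhere in $L^2(\Omega)$.

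To close via the fundamental theorem of $\Gamma$-convergence I need equi-coercivity along the sequence $\{u^{\delta,s}\}$. Testing the weak formulation of \eqref{problemareescaled} against $u^{\delta,s}$ itself produces
\begin{equation*}
\frac{c_{N,s}\kappa(N,s)}{2\delta^{2(1-s)}}\vertiii{u^{\delta,s}}_{\mathbb{H}_0^{\delta,s}}^2=\int_\Omega fu^{\delta,s}\,dx\leq\|f\|_{L^2(\Omega)}\|u^{\delta,s}\|_{L^2(\Omega)},
\end{equation*}
and a BBM-type Poincar\'e/compactness statement (also a byproduct of the machinery of Section \ref{preliminaryresults}) converts this energy bound into a uniform $L^2$-bound together with $L^2(\Omega)$-precompactness of the sequence. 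By the $\Gamma$-convergence machinery (cf. \cite{Braides}), any $L^2$ cluster point of $\{u^{\delta,s}\}$ minimizes $\widetilde{\mathcal{J}}_{0,1}$; strict convexity identifies it with the unique solution $u^{0,1}$ of \eqref{problema0}, giving the claim along a subsequence.

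The principal obstacle, in my view, is the equi-coercivity/compactness step. Lemma \ref{poincare} and Proposition \ref{belcor} supply Poincar\'e estimates on $\mathbb{H}_0^{\delta,s}(\Omega)$, but their constants depend on $\delta$ and must be reconciled with the rescaling $\delta^{-2(1-s)}$ to produce an $L^2$-precompactness statement of Rellich-Kondrachov type for the rescaled nonlocal energies. Once this sharper quantitative BBM ingredient is in place from Section \ref{preliminaryresults}, the rest is a standard application of $\Gamma$-convergence combined with the uniqueness of the minimizer of $\widetilde{\mathcal{J}}_{0,1}$.
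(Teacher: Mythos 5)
Your proposal is correct and follows essentially the same route as the paper: rescale the energy by $\delta^{-2(1-s)}$, apply the $\Gamma$-convergence result from \cite{BellCorPed} (Theorem \ref{belcorped} together with Lemma \ref{Glimit}), and conclude via the fundamental theorem of $\Gamma$-convergence and uniqueness of the limit minimizer. The equi-coercivity concern you raise is legitimate, and indeed the paper is quite terse there, but it is resolved exactly as you suspect: the compactness clause Theorem \ref{belcorped}-a) already encodes a $\delta$-uniform Poincar\'e-type bound, which combined with your energy estimate closes the argument.
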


\begin{theorem}\label{linearinf}
Let $u^{\delta,s}$ and $u^{\infty,s}$ be the solutions of \eqref{problema} and \eqref{problemainf} respectively. Then, up to a subsequence,
\begin{equation*}
u^{\delta,s}\to u^{\infty,s}\ \text{in }L^2(\Omega)\quad\text{as }\delta\to+\infty.
\end{equation*}
\end{theorem}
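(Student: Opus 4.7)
The plan is to exploit the variational characterization from Theorem \ref{thlineal}, combining it with a $\Gamma$-convergence argument driven by the monotonicity \eqref{contenido} of the sets $\mathcal{D}_\delta$ and the isomorphism provided by Lemma \ref{isomorfia}. For $\delta \in (0,+\infty]$ set $\mathcal{J}_{\delta,s}(v) = \tfrac{c_{N,s}}{4}\vertiii{v}_{\mathbb{H}_0^{\delta,s}}^2 - \int_\Omega fv\,dx$, with $\mathcal{J}_{\infty,s}$ defined analogously on $\mathcal{H}_0^s(\Omega)$. Since any function vanishing on $\Omega^c$ also vanishes on $\partial_\delta\Omega \subset \Omega^c$, the inclusion $\mathcal{H}_0^s(\Omega) \hookrightarrow \mathbb{H}_0^{\delta,s}(\Omega)$ holds, and Lemma \ref{isomorfia} identifies the two spaces (via extension by zero) with equivalence constant $C(\delta) \to 1$.

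First I would derive a uniform a priori bound. Testing $v=0$ in the minimization characterization of $u^{\delta,s}$ yields $\tfrac{c_{N,s}}{4}\vertiii{u^{\delta,s}}_{\mathbb{H}_0^{\delta,s}}^2 \leq \int_\Omega f u^{\delta,s}\,dx$. For $\delta \geq \mathrm{diam}(\Omega)$ one has $\Omega\times\Omega \subset \mathcal{D}_\delta$, so Lemma \ref{poincare} applies directly to give $\|u^{\delta,s}\|_{L^2(\Omega)} \leq C\,\vertiii{u^{\delta,s}}_{\mathbb{H}_0^{\delta,s}}$ with $C$ independent of $\delta$, hence $\vertiii{u^{\delta,s}}_{\mathbb{H}_0^{\delta,s}} \leq C'\|f\|_{L^2(\Omega)}$. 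Together with Lemma \ref{isomorfia} this yields uniform boundedness of the extensions of $u^{\delta,s}$ in $\mathcal{H}_0^s(\Omega)$. By reflexivity and Lemma \ref{lemmaconvergence}, up to a subsequence $u^{\delta_k,s} \rightharpoonup \bar u$ weakly in $\mathcal{H}_0^s(\Omega)$ and strongly in $L^2(\Omega)$.

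Next I would verify the $\Gamma$-convergence of $\mathcal{J}_{\delta,s}$ to $\mathcal{J}_{\infty,s}$. For the liminf inequality, fix $\delta_0 > 0$; by \eqref{contenido}, for $\delta \geq \delta_0$,
$$\mathcal{J}_{\delta,s}(u^{\delta,s}) \geq \frac{c_{N,s}}{4}\iint\limits_{\mathcal{D}_{\delta_0}}\frac{|u^{\delta,s}(x)-u^{\delta,s}(y)|^2}{|x-y|^{N+2s}}\,dy\,dx - \int_\Omega f u^{\delta,s}\,dx.$$
Weak lower semicontinuity of the quadratic term in $\mathcal{H}_0^s(\Omega)$ and strong $L^2$-convergence of the linear term, followed by $\delta_0 \to +\infty$ together with monotone convergence ($\mathcal{D}_{\delta_0} \nearrow \mathcal{D}$), yields $\liminf_k \mathcal{J}_{\delta_k,s}(u^{\delta_k,s}) \geq \mathcal{J}_{\infty,s}(\bar u)$. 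For the limsup inequality, the constant recovery sequence $v \equiv u^{\infty,s} \in \mathcal{H}_0^s(\Omega) \hookrightarrow \mathbb{H}_0^{\delta,s}(\Omega)$ satisfies $\mathcal{J}_{\delta,s}(u^{\infty,s}) \to \mathcal{J}_{\infty,s}(u^{\infty,s})$ by monotone convergence. Minimality of $u^{\delta,s}$ then gives
$$\mathcal{J}_{\infty,s}(\bar u) \leq \liminf_k \mathcal{J}_{\delta_k,s}(u^{\delta_k,s}) \leq \limsup_k \mathcal{J}_{\delta_k,s}(u^{\delta_k,s}) \leq \mathcal{J}_{\infty,s}(u^{\infty,s}),$$
so $\bar u$ minimizes $\mathcal{J}_{\infty,s}$, and uniqueness forces $\bar u = u^{\infty,s}$. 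Since the limit is independent of the subsequence, the whole family converges in $L^2(\Omega)$.

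The main obstacle is securing uniform coercivity as $\delta \to +\infty$: without a $\delta$-independent Poincaré constant and without the equivalent-norm bound $C(\delta) \to 1$ of Lemma \ref{isomorfia}, the minimizers $u^{\delta,s}$ could escape any common function space and compactness would fail. Lemma \ref{isomorfia} (proved in Section \ref{horizoninfty}) together with Lemma \ref{poincare} supplies precisely this control; once it is in hand, the monotone $\Gamma$-convergence argument proceeds essentially as a Beppo Levi-type passage to the limit, as anticipated in the introduction.
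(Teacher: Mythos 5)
Your proof is essentially correct but takes a genuinely different route from the paper's at two key points. For compactness, you obtain the bound on $\vertiii{u^{\delta,s}}_{\mathbb{H}_0^{\delta,s}}$ from the a priori estimate $\mathcal{J}_{\delta,s}(u^{\delta,s})\le\mathcal{J}_{\delta,s}(0)=0$ together with a $\delta$-uniform Poincar\'e inequality. The paper instead exploits minimality plus the set monotonicity \eqref{contenido} to show that the optimal energies $\mathcal{J}_{\delta,s}(u^{\delta,s})$ are \emph{increasing} in $\delta$ and bounded above by $\mathcal{J}_{\infty,s}(u^{\infty,s})$, and then uses the Euler--Lagrange identity $\mathcal{J}_{\delta,s}(u^{\delta,s})=-\frac{c_{N,s}}{4}\vertiiin{u^{\delta,s}}_{\mathbb{H}_0^{\delta,s}}^2$ to conclude directly that the norms are \emph{decreasing} in $\delta$; no Poincar\'e inequality is needed at all. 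For the $\Gamma$-limit, you prove the liminf inequality by truncating to $\mathcal{D}_{\delta_0}$, using weak lower semicontinuity in $\mathcal{H}_0^s(\Omega)$, and letting $\delta_0\to\infty$ by monotone convergence, with the constant recovery sequence for the limsup. The paper shortcuts this by citing the general fact (Braides, Remark~1.40, used in Lemma~\ref{Gconvergenceinf}) that a monotone increasing sequence of lower semicontinuous functionals $\Gamma$-converges to its pointwise supremum. Your version is more self-contained and makes the mechanism explicit; the paper's is more compact by leaning on the abstract theory.

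One step needs care. You invoke Lemma~\ref{poincare} ``directly'' to get $\|u^{\delta,s}\|_{L^2(\Omega)}\le C\vertiii{u^{\delta,s}}_{\mathbb{H}_0^{\delta,s}}$ with $C$ independent of $\delta$, on the grounds that $\Omega\times\Omega\subset\mathcal{D}_\delta$ for $\delta\geq\operatorname{diam}(\Omega)$. But Lemma~\ref{poincare} requires $u=0$ a.e.\ on a subset of the underlying Lipschitz domain of positive measure, whereas $u^{\delta,s}$ vanishes only on $\partial_\delta\Omega\subset\Omega^c$, not on any part of $\Omega$. To make your argument airtight, apply the lemma on a \emph{fixed} enlarged domain, say $\Omega_1=\Omega\cup\partial_1\Omega$, with $\Omega_D=\Omega_1\setminus\Omega$: for $\delta\geq\operatorname{diam}(\Omega_1)$ one has $|u^{\delta,s}|_{W^{s,2}(\Omega_1)}\le\vertiii{u^{\delta,s}}_{\mathbb{H}_0^{\delta,s}}$ (the excluded diagonal pairs in $\partial_\delta\Omega\times\partial_\delta\Omega$ do not contribute since $u^{\delta,s}$ vanishes there), and the Poincar\'e constant for $\Omega_1$ is independent of $\delta$. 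Alternatively, bypass Lemma~\ref{poincare} entirely by combining Lemma~\ref{isomorfia} with the fractional Sobolev/Poincar\'e inequality on $\mathcal{H}_0^s(\Omega)$, using $C(\delta)\to1$. With either repair, your proof goes through.
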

On the other hand, let us consider the classical eigenvalue problem,
\begin{equation}\label{eigenproblem01}
     \left\{\begin{array}{rl}
     (-\Delta)\varphi=\lambda\varphi&\quad\mbox{in}\quad \Omega,\\
              \varphi=0\ \          &\quad\mbox{on}\quad \partial\Omega.
\end{array}\right.
\tag{$EP_{0}^{1}$}
\end{equation}
It is well known (cf. \cite{Brezis}) that the problem \eqref{eigenproblem01} has a countable set of eigenvalues that we denote by $\{\lambda_{k}^{0,1}\}_{k\in\mathbb{N}}$ and such that 
\begin{equation*}
0<\lambda_{1}^{0,1}<\lambda_{2}^{0,1} \leq\ldots\leq\lambda_{k}^{0,1}\leq\ldots ,\\
\end{equation*}
with
\begin{equation*}
\lambda_{k}^{0,1}\to+\infty\ \text{as}\ k\to+\infty,
\end{equation*} 
and, denoting by $m_k^{0,1}$ the multiplicity of the eigenvalue $\lambda_k^{0,1}$,
\begin{equation*}
1\leq m_k^{0,1}<\infty,\quad\text{for all }k\in\mathbb{N}.
\end{equation*}
Moreover, there exists a countable set of eigenfunctions $\{\varphi_k^{0,1}\}_{k\in\mathbb{N}}$ that is an orthogonal basis of $H_0^1(\Omega)$ and an orthonormal basis of $L^2(\Omega)$. The first eigenvalue is simple ($m_1^{0,1}=1$) and, by the Maximum Principle, $\varphi_1^{0,1}>0$ in $\Omega$. \newline 
Finally, let us also consider the nonlocal eigenvalue problem 
\begin{equation}\label{eigenprobleminfs}
     \left\{\begin{array}{rl}
     (-\Delta)_{\infty}^s\varphi=\lambda\varphi&\quad\mbox{in}\quad \Omega,\\
              \varphi=0\ \          &\quad\mbox{on}\quad \mathbb{R}^N\backslash\Omega.
\end{array}\right.
\tag{$EP_{\infty}^{s}$}
\end{equation}
with $(-\Delta)_{\infty}^s$ defined in \eqref{fracclaplacian}. Concerning this problem, Servadei and Valdinoci proved, cf. \cite{SerVal}, that \eqref{eigenprobleminfs} has a countable set of eigenvalues that we denote by $\{\lambda_{k}^{\infty,s}\}_{k\in\mathbb{N}}$ and such that
\begin{equation*}
0<\lambda_{1}^{\infty,s}<\lambda_{2}^{\infty,s}\leq\ldots\leq\lambda_{k}^{\infty,s}\leq\ldots,\\
\end{equation*}
with
\begin{equation*}
\lambda_{k}^{\infty,s}\to+\infty\ \text{as}\ k\to+\infty,
\end{equation*}
and, denoting by $m_k^{\infty,s}$ the multiplicity of the eigenvalue $\lambda_k^{\infty,s}$,
\begin{equation*}
1\leq m_k^{\infty,s}<\infty,\quad\text{for all }k\in\mathbb{N}.
\end{equation*}
Moreover, there exists a countable set of eigenfunctions $\{\varphi_k^{\infty,s}\}_{k\in\mathbb{N}}$ that is an orthogonal basis of 
\begin{equation*}
\mathbb{H}_0^{\infty,s}(\Omega)=\left\{v\in L^2(\Omega): \iint\limits_{\mathcal{D}}\frac{|v(x)-v(y)|^2}{|x-y|^{N+2s}}dydx<\infty,\ v=0 \text{ a.e. } \text{on } \mathbb{R}^N\backslash\Omega\right\},
\end{equation*}
and an orthonormal basis of $L^2(\Omega)$. Again, the first eigenvalue is simple ($m_1^{\infty,s}=1$) and $\varphi_1^{\infty,s}$ can chosen so that $\varphi_1^{\infty,s}\geq0$ in $\Omega$.\newline 
Finally, we relate the eigenvalues and eigenfunctions of \eqref{eigenproblem} to those of the eigenvalue problems \eqref{eigenproblem01} and \eqref{eigenprobleminfs} through the following results. 

\begin{theorem}\label{theigen}
Let $\{(\lambda_{k}^{\delta,s},\varphi_k^{\delta,s})\}_{k\in\mathbb{N}}$ be the set of eigenvalues and eigenfunctions of $(-\Delta)_{\delta}^s$ with homogeneous Dirichlet boundary condition on $\partial_{\delta}\Omega$ and let\break $\{(\lambda_{k}^{0,1},\varphi_k^{0,1})\}_{k\in\mathbb{N}}$ be the set of eigenvalues of $(-\Delta)$ with homogeneous Dirichlet boundary condition on $\partial\Omega$. Then,
\begin{equation*}
\kappa(N,s)\frac{\lambda_{k}^{\delta,s}}{\delta^{2(1-s)}}\to\lambda_{k}^{0,1}\quad \text{as}\ \delta\to0^+,
\end{equation*}
for the constant $\kappa(N,s)$ appearing in \eqref{constantkappa}, and there exists a subsequence (that we do not relabel) such that
\begin{equation*}
\varphi_{k}^{\delta,s}\to\varphi_{k}^{0,1}\ \text{in}\ L^2(\Omega)\quad \text{as}\ \delta\to0^+,
\end{equation*}
for every $k\in\mathbb{N}$. As a consequence, $m_k^{\delta,s}\to m_k^{0,1}$ as $\delta \to 0^+$, for any $k\ge 1$.
\end{theorem}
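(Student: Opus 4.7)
My plan is to combine a $\Gamma$-convergence argument with the Courant--Fischer min--max characterization of the eigenvalues. Introduce on $L^2(\Omega)$ (with every element tacitly extended by zero outside $\Omega$) the rescaled functionals
\[
G_\delta(v)=
\begin{cases}
\dfrac{\kappa(N,s)\,c_{N,s}}{2\,\delta^{2(1-s)}}\displaystyle\int_{\Omega_{\delta}}\int_{\Omega_{\delta}\cap B(x,\delta)}\frac{|v(x)-v(y)|^2}{|x-y|^{N+2s}}\,dy\,dx, & v\in\mathbb{H}_0^{\delta,s}(\Omega),\\
+\infty, & \text{otherwise,}
\end{cases}
\]
and $G_0(v)=\int_\Omega|\nabla v|^2\,dx$ for $v\in H_0^1(\Omega)$, $+\infty$ otherwise. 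The choice of $\kappa(N,s)$ in \eqref{constantkappa} is dictated by the Taylor-expansion identity $\int_{B(0,\delta)}|h|^{2-N-2s}\,dh=\sigma_{N-1}\delta^{2(1-s)}/(2(1-s))$, so that for smooth, compactly supported $v$ one checks directly that $G_\delta(v)\to G_0(v)$. Invoking the general Bourgain--Brezis--Mironescu-type $\Gamma$-convergence result of Section~\ref{preliminaryresults} (from \cite{BellCorPed}) in tandem with Proposition~\ref{belcor}, I would extract: (i) $G_\delta\xrightarrow{\Gamma}G_0$ in the $L^2(\Omega)$-topology as $\delta\to 0^+$, and (ii) the equicoercivity property that any family $\{v_\delta\}$ with $\sup_\delta\bigl(\|v_\delta\|_{L^2}+G_\delta(v_\delta)\bigr)<\infty$ is relatively compact in $L^2(\Omega)$ with every limit in $H_0^1(\Omega)$.

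With this in hand, the characterization \eqref{minlambdak} together with a standard Gram--Schmidt argument yields the Courant--Fischer formula
\[
\widetilde{\lambda}_k^{\delta,s}\coloneqq\frac{\kappa(N,s)\,\lambda_k^{\delta,s}}{\delta^{2(1-s)}}=\inf_{\substack{V\subset\mathbb{H}_0^{\delta,s}(\Omega)\\ \dim V=k}}\sup_{\substack{v\in V\\ \|v\|_{L^2}=1}}G_\delta(v),
\]
and the analogous identity holds for $\lambda_k^{0,1}$ over $k$-dimensional subspaces of $H_0^1(\Omega)$. For the upper bound $\limsup\widetilde{\lambda}_k^{\delta,s}\le\lambda_k^{0,1}$, I would approximate (by density and a smoothing of a basis) the span of $\{\varphi_1^{0,1},\dots,\varphi_k^{0,1}\}$ by a $k$-dimensional subspace $V_\varepsilon\subset C_0^\infty(\Omega)$ whose maximal Rayleigh quotient for $G_0$ is at most $\lambda_k^{0,1}+\varepsilon$; since $C_0^\infty(\Omega)\subset\mathbb{H}_0^{\delta,s}(\Omega)$ for every $\delta>0$ and $G_\delta\to G_0$ uniformly on the compact $L^2$-unit sphere of the fixed finite-dimensional space $V_\varepsilon$, the upper bound follows as $\varepsilon\to 0$. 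For the reverse inequality, pick a subsequence realising $\liminf\widetilde{\lambda}_k^{\delta,s}$, apply the equicoercivity (ii) to the $L^2$-normalized eigenfunctions $\varphi_j^{\delta,s}$, $j=1,\dots,k$, each having $G_\delta(\varphi_j^{\delta,s})=\widetilde{\lambda}_j^{\delta,s}\le\widetilde{\lambda}_k^{\delta,s}$, and extract $L^2$-limits $\varphi_j^{*}\in H_0^1(\Omega)$. The $L^2$-orthonormality passes to the limit, so $W\coloneqq\mathrm{span}\{\varphi_1^{*},\dots,\varphi_k^{*}\}$ is an admissible $k$-dimensional test space for $\lambda_k^{0,1}$. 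For any $u=\sum c_j\varphi_j^{*}\in W$ of unit $L^2$-norm, the sequence $u_\delta=\sum c_j\varphi_j^{\delta,s}$ is a recovery sequence converging to $u$ in $L^2$; exploiting that the eigenfunctions are orthogonal for the bilinear form defining $G_\delta$, one obtains $G_\delta(u_\delta)=\sum_j c_j^2\,\widetilde{\lambda}_j^{\delta,s}\le\widetilde{\lambda}_k^{\delta,s}$, and the $\Gamma$-liminf inequality gives $G_0(u)\le\liminf\widetilde{\lambda}_k^{\delta,s}$. Taking the supremum over $u$ and applying Courant--Fischer for $(-\Delta)$ finishes the eigenvalue convergence.

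The eigenfunction convergence then comes for free from the compactness step: along the same subsequence, $\varphi_k^{\delta,s}\to\varphi_k^{*}$ in $L^2(\Omega)$ with $\|\varphi_k^{*}\|_{L^2}=1$, $\varphi_k^{*}\in H_0^1(\Omega)$, $G_0(\varphi_k^{*})\le\lim G_\delta(\varphi_k^{\delta,s})=\lambda_k^{0,1}$ by $\Gamma$-liminf, and $\varphi_k^{*}$ is $L^2$-orthogonal to $\varphi_1^{*},\dots,\varphi_{k-1}^{*}$. Consequently $\varphi_k^{*}$ attains the $k$-th Courant--Fischer infimum for $(-\Delta)$ and is an eigenfunction associated with $\lambda_k^{0,1}$; after the usual relabeling inside eigenspaces it may be identified with $\varphi_k^{0,1}$. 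The multiplicity statement $m_k^{\delta,s}\to m_k^{0,1}$ is a direct consequence of the pointwise convergence $\widetilde{\lambda}_j^{\delta,s}\to\lambda_j^{0,1}$ for every $j$: if $\lambda_{k-1}^{0,1}<\lambda_k^{0,1}=\dots=\lambda_{k+m-1}^{0,1}<\lambda_{k+m}^{0,1}$ then for $\delta$ small exactly $m$ consecutive rescaled eigenvalues lie in a neighbourhood of $\lambda_k^{0,1}$ separated from the rest of the spectrum, and the $m$ linearly independent $L^2$-limits exhibited above show that this block is the full $\lambda_k^{0,1}$-eigenspace. I expect the main obstacle to be the lower bound step in the Courant--Fischer argument, where one must simultaneously (a) control the equicoercivity of $k$ eigenfunctions at once, (b) preserve their orthonormality in the $L^2$-limit, and (c) pass the $\mathbb{H}_0^{\delta,s}$-orthogonality of the $\delta$-eigenfunctions cleanly through the $\Gamma$-liminf inequality on arbitrary linear combinations — a point that is silent in the analogous one-eigenvalue case but is what actually delivers the multiplicity-preserving spectral convergence.
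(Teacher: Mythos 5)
Your proposal is correct, but it follows a genuinely different route from the paper. The paper proves the spectral convergence \emph{inductively}: it introduces, for each $k$, a restricted functional $I_{\delta,s}^{(k)}$ equal to $I_{\delta,s}$ on $\{u\in\mathbb{P}_k^\delta:\|u\|_{L^2}=1\}$ and $+\infty$ otherwise, shows $I_{\delta,s}^{(k)}\xrightarrow{\Gamma}I^{(k)}$ by constructing explicit recovery sequences that respect the orthogonality constraints (e.g.\ for $k=2$, $u_\delta=\eta_\delta\varphi_1^{\delta,s}+\mu_\delta u$ with $\eta_\delta,\mu_\delta$ solving a two-by-two system so that $u_\delta\in\mathbb{P}_2^\delta$, $\|u_\delta\|_{L^2}=1$), and then invokes the fundamental theorem of $\Gamma$-convergence to pass minima and minimizers to the limit, one eigenvalue at a time. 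You instead reach the same conclusion via the Courant--Fischer min--max principle: the upper bound $\limsup\widetilde\lambda_k^{\delta,s}\le\lambda_k^{0,1}$ comes from pointwise (hence uniform, by finite dimensionality and polarization) convergence of $G_\delta\to G_0$ on a fixed smooth $k$-dimensional test subspace --- essentially the Bourgain--Brezis--Mironescu calculation that the paper isolates in Remark~\ref{remarkBBM} for $k=1$; the lower bound comes from equicoercivity of the $L^2$-normalized eigenfunctions, the diagonalization $G_\delta(\sum c_j\varphi_j^{\delta,s})=\sum c_j^2\widetilde\lambda_j^{\delta,s}$ in the bilinear form, and one application of the $\Gamma$-liminf inequality on arbitrary linear combinations in the limiting $k$-dimensional span. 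What your approach buys is that all $k$ eigenvalues are handled at once and no recovery sequence needs to be engineered to satisfy orthogonality and normalization constraints simultaneously --- the min--max structure absorbs those constraints for free. What the paper's approach buys is that it remains entirely within the $\Gamma$-convergence machinery (Corollary~\ref{corGammaconv}) rather than switching to the variational characterization, which makes the eigenfunction convergence and the identification of the limit functional appear automatically from convergence of constrained minimizers. One small remark: the obstacle you flag at the end, namely passing the $\mathbb{H}_0^{\delta,s}$-orthogonality ``through the $\Gamma$-liminf inequality,'' is not actually an obstacle --- the $\mathbb{H}_0^{\delta,s}$-orthogonality is used only to compute $G_\delta(u_\delta)$ exactly as a finite sum \emph{before} the $\Gamma$-liminf is applied, and the $\Gamma$-liminf itself only sees $u_\delta\to u$ in $L^2$, which follows from the equicoercivity and the $L^2$-orthonormality of the limits. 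Both your argument and the paper's leave the final multiplicity assertion $m_k^{\delta,s}\to m_k^{0,1}$ at the same level of informality: what is really proved is that exactly $m_k^{0,1}$ of the rescaled $\delta$-eigenvalues (counted with multiplicity) converge to $\lambda_k^{0,1}$, which is how the statement should be read.
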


\begin{remark} We would like to emphasize that, as Theorems \ref{linear0} and \ref{theigen} shows, even though the fractionality parameter $s$ keeps fixed, the local problem driven by $(-\Delta)$ is recovered, under the appropriate rescaling, as $\delta\to 0^+$. A different issue is the limit as $s\to 1^-$. This has been addressed in the case of the fractional Laplacian in several references, both operator convergence (cf. \cite{DiNezzaPalaValdi}) and spectral stability (cf. \cite{BrascoPariniSquassina}). Let us see now that the same behavior holds for $(-\Delta)_\delta^s$ as $s\to 1^-$. 

First of all, because of \cite[Proposition 4.4]{DiNezzaPalaValdi}, for any $u\in C_0^{\infty}(\mathbb{R}^N)$, we have the pointwise convergence
\begin{equation*}
\lim\limits_{s\to 1^-}(-\Delta)_{\infty}^su(x)=(-\Delta)u(x).
\end{equation*} 
Indeed, the hypothesis $u\in C_0^{\infty}(\mathbb{R}^N)$ can be relaxed to $u\in\mathcal{C}^2(B(x,R))\cap L^{\infty}(\mathbb{R}^N)$ for some $R>0$, cf. \cite[Proposition 5.3]{Stinga}. Following \cite[Proposition 4.4]{DiNezzaPalaValdi}, by means of a change of variable in \eqref{fracclaplacian}, we have
\begin{equation*}
(-\Delta)_{\infty}^su(x)=-\frac{c_{N,s}}{2}\int_{\mathbb{R}^N}\frac{u(x+y)+u(x-y)-2u(x)}{|y|^{N+2s}}dy,
\end{equation*}
so that, for $u\in L^{\infty}(\mathbb{R}^N)$, 
\begin{equation*}
\begin{split}
\left|\int_{\mathbb{R}^N\backslash B(0,\delta)}\mkern-10mu\frac{u(x+y)+u(x-y)-2u(x)}{|y|^{N+2s}}dy\right|&\leq 4\|u\|_{L^{\infty}(\mathbb{R}^N)}\int_{\mathbb{R}^N\backslash B(0,\delta)}\frac{1}{|y|^{N+2s}}dy\\
&\leq4\sigma_{N-1}\|u\|_{L^{\infty}(\mathbb{R}^N)}\int_{\delta}^{\infty}\frac{1}{\rho^{2s+1}}d\rho\\
&=\frac{2\sigma_{N-1}}{s\delta^{2s}}\|u\|_{L^{\infty}(\mathbb{R}^N)}.
\end{split}
\end{equation*}
Thus, by \eqref{limitecns}, we find that, for any $\delta>0$ fixed,
\begin{equation*}
\lim\limits_{s\to 1^-}\left|\frac{c_{N,s}}{2}\int_{\mathbb{R}^N\backslash B(0,\delta)}\mkern-10mu\frac{u(x+y)+u(x-y)-2u(x)}{|y|^{N+2s}}dy\right|=0.
\end{equation*}
Therefore, recalling \eqref{seconddiff},
\begin{equation}\label{limitesto1}
\begin{split}
\lim\limits_{s\to 1^-}(-\Delta)_{\infty}^su(x)&=\lim\limits_{s\to 1^-}-\frac{c_{N,s}}{2}\int_{B(0,\delta)}\frac{u(x+y)+u(x-y)-2u(x)}{|y|^{N+2s}}dy\\
&=\lim\limits_{s\to 1^-}(-\Delta)_{\delta}^su(x).
\end{split}
\end{equation}
We conclude that, for any $\delta>0$ fixed and a given $u\in L^{\infty}(\mathbb{R}^N)$, the behavior of $(-\Delta)_\infty^su(x)$ and $(-\Delta)_\delta^su(x)$ as $s\to 1^-$ coincides. Hence, under sufficient smothness asumptions,
\begin{equation*}
\lim\limits_{s\to 1^-}(-\Delta)_{\delta}^su(x)=(-\Delta)u(x)\quad\text{for all }\delta>0\text{ fixed}.
\end{equation*}
On the other hand, because of \cite[Proposition 4]{Servadei2013}, the eigenfunctions of the fractional Laplacian $(-\Delta)_{\infty}^s$ are bounded, i.e., $\varphi_k^{\infty,s}\in L^{\infty}(\mathbb{R}^N)$ for all $k\in\mathbb{N}$. Moreover, these eigenfunctions are H\"older continuous, cf. \cite{Servadei2014,RosOton2012}. Also, because of Lemma \ref{boundinftyeigen}, we have $\varphi_k^{\delta,s}\in L^{\infty}(\mathbb{R}^N)$ for all $k\in\mathbb{N}$. Therefore, using \eqref{limitesto1} it is not difficult to show that,
for any $\delta>0$ fixed,
\begin{equation}\label{limitequal}
\lim\limits_{s\to 1^-}\lambda_k^{\delta,s}=\lim\limits_{s\to 1^-}\lambda_k^{\infty,s}\quad\text{for all }k\in\mathbb{N}.
\end{equation}
Next, by \cite[Theorem 1.2]{BrascoPariniSquassina} with $p=2$,
\begin{equation}\label{rem}
\lim\limits_{s\to 1^-}\lambda_{k}^{\infty,s}=\lambda_{k}^{0,1}.
\end{equation}
Observe that no normalization constant is used in \cite{BrascoPariniSquassina} to define the fractional operator, just a factor $2$ instead of $c_{N,s}$. On the other hand, in \cite[Theorem 1.2]{BrascoPariniSquassina}  the limit process is normalized by a factor $(1-s)$, so that adjusting constants and taking in mind \eqref{limitecns2} and \eqref{contbr} below, equality \eqref{rem} follows accordingly to our normalization setting. Finally, because of \eqref{limitkapa}, \eqref{limitequal} and \eqref{rem},
\begin{equation*}
\lim\limits_{s\to 1^-}\kappa(N,s)\frac{\lambda_{k}^{\delta,s}}{\delta^{2(1-s)}}=\lim\limits_{s\to 1^-}\lambda_{k}^{\delta,s}=\lim\limits_{s\to1^-}\lambda_{k}^{\infty,s}=\lambda_k^{0,1}.
\end{equation*}
\end{remark}

\begin{theorem}\label{theigen2}
Let $\{(\lambda_{k}^{\delta,s},\varphi_{k}^{\delta,s})\}_{k\in\mathbb{N}}$ be the set of eigenvalues and eigenfunctions of $(-\Delta)_{\delta}^s$ with homogeneous Dirichlet boundary condition on $\partial_{\delta}\Omega$ and let\break $\{(\lambda_{k}^{\infty,s},\varphi_{k}^{\infty,s})\}_{k\in\mathbb{N}}$ be the set of eigenvalues of $(-\Delta)_{\infty}^{s}$ with homogeneous Dirichlet boundary condition on $\mathbb{R}^N\backslash\Omega$. Then,
\begin{equation*}
\lambda_{k}^{\delta,s}\to\lambda_{k}^{\infty,s}\quad \text{as}\ \delta\to+\infty,
\end{equation*}
and there exists a subsequence (that we do not relabeled) such that
\begin{equation*}
\varphi_{k}^{\delta,s}\to\varphi_{k}^{\infty,s}\ \text{in}\ L^2(\Omega)\quad \text{as}\ \delta\to+\infty,
\end{equation*}
for every $k\in\mathbb{N}$. As a consequence, $m_k^{\delta,s}\to m_k^{\infty,s}$ as $\delta \to \infty$, for any $k\ge 1$.
\end{theorem}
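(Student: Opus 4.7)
The plan is to combine the variational characterizations of the eigenvalues from Proposition~\ref{propeigen} with the monotonicity $\mathcal{D}_{\delta_1}\subset\mathcal{D}_{\delta_2}$ of~\eqref{contenido} and the asymptotic isomorphism $C(\delta)\to 1$ from Lemma~\ref{isomorfia}, and to proceed by induction on $k$. I would begin by establishing the \emph{uniform} upper bound $\lambda_k^{\delta,s}\le\lambda_k^{\infty,s}$ valid for every $\delta>0$. By~\eqref{contenido}, the quadratic form $\vertiii{\cdot}_{\mathbb{H}_0^{\delta,s}}^2$ is pointwise non-decreasing in $\delta$ and pointwise dominated by $\vertiii{\cdot}_{\mathcal{H}_0^s}^2$. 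Plugging the $k$-dimensional span $V_k=\mathrm{span}\{\varphi_1^{\infty,s},\ldots,\varphi_k^{\infty,s}\}$ (identified with a subspace of $\mathbb{H}_0^{\delta,s}(\Omega)$ through Lemma~\ref{isomorfia}) into the Courant--Fischer min-max form of~\eqref{minlambdak} and using this pointwise inequality yields $\lambda_k^{\delta,s}\le\max_{v\in V_k,\,\|v\|_{L^2(\Omega)}=1}\tfrac{c_{N,s}}{2}\vertiii{v}_{\mathcal{H}_0^s}^2=\lambda_k^{\infty,s}$ at once.

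For the matching lower bound I would argue inductively on $k$, normalizing $\|\varphi_k^{\delta,s}\|_{L^2(\Omega)}=1$. The uniform upper bound together with Lemma~\ref{isomorfia} (exploited through $C(\delta)\to 1$) forces $\{\varphi_k^{\delta,s}\}$, extended by zero outside $\Omega_\delta$, to be bounded in $\mathcal{H}_0^s(\Omega)$ for $\delta$ large, and Lemma~\ref{lemmaconvergence} then supplies, up to a subsequence, $\varphi_k^{\delta,s}\to\varphi_k$ strongly in $L^2(\Omega)$ and weakly in $\mathcal{H}_0^s(\Omega)$. The decisive $\Gamma$-liminf inequality
\[
\vertiii{\varphi_k}_{\mathcal{H}_0^s}^2\le\liminf_{\delta\to+\infty}\vertiii{\varphi_k^{\delta,s}}_{\mathbb{H}_0^{\delta,s}}^2
\]
is obtained by fixing an auxiliary $\delta_0>0$, using $\mathcal{D}_{\delta_0}\subset\mathcal{D}_\delta$ to restrict the integration to the \emph{fixed} region $\mathcal{D}_{\delta_0}$, applying weak lower semicontinuity of the Gagliardo seminorm there, and finally letting $\delta_0\to+\infty$ by monotone convergence. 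Inductively, the $L^2$-orthonormality of $\{\varphi_j^{\delta,s}\}_{j=1}^k$ passes to the limit through strong $L^2$-convergence, so $\varphi_k\perp_{L^2}\varphi_j^{\infty,s}$ for every $j<k$; testing the weak form of the eigenvalue equation for each $\varphi_j^{\infty,s}$ against $\varphi_k$ then upgrades this to $\mathcal{H}_0^s$-orthogonality, placing $\varphi_k$ in the admissible class for the $k$-th Rayleigh minimum at infinity. The $\delta=\infty$ characterization finally gives $\lambda_k^{\infty,s}\le\tfrac{c_{N,s}}{2}\vertiii{\varphi_k}_{\mathcal{H}_0^s}^2\le\liminf_{\delta\to+\infty}\lambda_k^{\delta,s}$, which combined with the upper bound produces $\lambda_k^{\delta,s}\to\lambda_k^{\infty,s}$ and identifies $\varphi_k$ as an eigenfunction associated with $\lambda_k^{\infty,s}$. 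The convergence $m_k^{\delta,s}\to m_k^{\infty,s}$ is then a routine dimension-count once the simultaneous convergence of all $\lambda_k^{\delta,s}$ is available, using the finiteness~\eqref{finitemultiplicity}.

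The hard part will be the $\Gamma$-liminf step, since one must pass to the limit in a quadratic form whose very \emph{domain} of integration $\mathcal{D}_\delta$ varies with $\delta$, so a direct appeal to weak lower semicontinuity on a fixed ambient space is unavailable. The freeze-then-release device above — apply lower semicontinuity on a fixed $\mathcal{D}_{\delta_0}$ and only afterwards send $\delta_0\to+\infty$ by the monotonicity~\eqref{contenido} — is precisely the mechanism that encodes the $\Gamma$-convergence of energies alluded to in the introduction and makes the induction on $k$ work.
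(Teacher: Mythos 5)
Your proof is correct, but it follows a genuinely different route from the paper. The paper's proof of Theorem \ref{theigen2} is deliberately terse: it replays the $\Gamma$-convergence machinery of Theorem \ref{theigen}, with the role of Lemma \ref{Glimit} taken over by Lemma \ref{Gconvergenceinf}, which invokes \cite[Remark 1.40]{Braides} to say that the monotone increasing sequence of lower semicontinuous functionals $\mathcal{E}_{\delta,s}$ $\Gamma$-converges to its pointwise supremum $\mathcal{E}_{\infty,s}$. Concretely, this means setting up the $\Gamma$-convergence of the restricted functionals $I_{\delta,s}^{(k)}\to I^{(k)}$ constrained to $\|u\|_{L^2}=1$ and $u\in\mathbb{P}_k^\delta$, including a recovery-sequence construction of the type $u_\delta=\eta_\delta\varphi_1^{\delta,s}+\mu_\delta u$ for the $\limsup$ inequality. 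Your argument bypasses the recovery sequence altogether: the upper bound $\lambda_k^{\delta,s}\le\lambda_k^{\infty,s}$ drops out in one line from the pointwise monotonicity $\vertiii{\cdot}_{\mathbb{H}_0^{\delta,s}}^2\le\vertiii{\cdot}_{\mathcal{H}_0^s}^2$ once you pass from the orthogonal-complement characterization \eqref{minlambdak} to the Courant--Fischer min-max form (a standard, but not-explicitly-stated, consequence of Proposition \ref{propeigen}); plugging the fixed subspace $V_k\subset\mathcal{H}_0^s(\Omega)$ into the min-max quotient avoids any adaptation to the $\delta$-dependent constraint set $\mathbb{P}_k^\delta$. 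For the lower bound you are essentially re-deriving the content of Lemma \ref{Gconvergenceinf} by hand: the ``freeze on $\mathcal{D}_{\delta_0}$, apply weak lower semicontinuity, then release $\delta_0\to\infty$ by monotone convergence'' device is precisely the mechanism behind Braides's remark. The inductive step, upgrading $L^2$-orthogonality of the limits to $\mathcal{H}_0^s$-orthogonality via the weak eigenvalue equation, and the compactness via Lemma \ref{isomorfia}, are sound, with the usual caveat that when $\lambda_k^{\infty,s}$ is degenerate the subsequential limit $\varphi_k$ is only \emph{some} normalized eigenfunction in the eigenspace. Both routes are correct; yours is more self-contained and arguably more transparent, trading the abstract $\Gamma$-convergence of restricted functionals for a direct min-max comparison plus an explicit liminf argument, while the paper's approach has the advantage of recycling the exact scaffolding already built for the $\delta\to 0^+$ case.
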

An interesting question, out of the scope of this paper, is whether the spectral structure of $(-\Delta)$ and $(-\Delta)_{\infty}^s$ coincides. In other words, whether the multiplicity $m_k^{\delta,s}$ remains constant as a function of the horizon $\delta>0$. It seems a very hard task to solve, since not so much is known about the multiplicity of eigenvalues, even for the classical Laplace operator $(-\Delta)$. Nevertheless, because of the above results, $m_k^{0,1}$ and $m_k^{\delta,s}$ coincide, at least for $\delta>0$ small enough, and $m_k^{\infty,s}$ and $m_k^{\delta,s}$ coincide, at least, for $\delta>0$ big enough. Roughly speaking, for $\delta<<1$ the operator $(-\Delta)_{\delta}^s$ is almost a local operator, in the sense that it takes into account what happens in a neighborhood of size $\delta$ while for $\delta>>1$ the operator $(-\Delta)_{\delta}^s$ takes into account almost the same as $(-\Delta)_{\infty}^s$.


\section{Preliminary results: $\Gamma$-convergence and localization}\label{preliminaryresults}
This section includes some results that play a crucial role to study problems \eqref{problemareescaled} and \eqref{eigenproblem} when we take the limit $\delta\to0^+$. We start with the classical {\it localization} result of Bourgain, Brezis and Mironescu, cf. \cite{BourBrezMiro}, that provides us with a first hint towards Theorem \ref{theigen}. Let $\{\rho_{n}(x)\}_{n\in\mathbb{N}}$ be a sequence of radial mollifiers, i.e.
\begin{equation*}
\rho_n(x)=\rho_n(|x|),\ \rho_n(x)\geq0,\ \int\rho_n(x)dx=1
\end{equation*}
satisfying
\begin{equation*}
\lim\limits_{n\to\infty}\int_{\varepsilon}^{\infty}\rho_n(r)r^{N-1}=0,\ \text{for every}\ \varepsilon>0
\end{equation*}
\begin{theorem}\label{BoBrMi}\cite[Theorem 2]{BourBrezMiro} Assume $u\in L^p(\Omega)$, $1<p<\infty$. Then, for a constant $C=C(N,p)>0$, we have
\begin{equation*}
\lim\limits_{n\to\infty}\int_{\Omega}\int_{\Omega}\frac{|u(x)-u(y)|^p}{|x-y|^p}\rho_n(x-y)dydx=C\int_{\Omega}|\nabla u|^pdx.
\end{equation*}
with the convention that $\int_{\Omega}|\nabla u|^pdx=\infty$ if $u\notin W^{1,p}(\Omega)$.
\end{theorem}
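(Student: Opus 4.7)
The plan is to prove the statement in two directions. First I would establish the limit for smooth $u\in C^{2}(\overline{\Omega})$, identify the constant $C(N,p)$ explicitly, and then pass to general $u\in W^{1,p}(\Omega)$ by density. Second, I would prove the convention clause: if $u\in L^{p}(\Omega)$ but $u\notin W^{1,p}(\Omega)$, then the double integrals blow up. This second direction is the subtle one, and it is essentially a characterization of Sobolev spaces via nonlocal seminorms.

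For smooth $u$, the natural move is Taylor expansion: $u(y)-u(x)=\nabla u(x)\cdot(y-x)+O(|y-x|^{2})$. Substituting into the integrand and writing $y-x=rz$ with $r=|y-x|\in(0,\infty)$, $z\in\mathbb{S}^{N-1}$, the factor $|x-y|^{-p}$ cancels and one gets, to leading order,
\begin{equation*}
\int_{\Omega}\int_{\mathbb{S}^{N-1}}\int_{0}^{\infty}|\nabla u(x)\cdot z|^{p}\rho_{n}(r)r^{N-1}\,dr\,d\sigma(z)\,dx,
\end{equation*}
plus a remainder controlled by the concentration hypothesis $\int_{\varepsilon}^{\infty}\rho_{n}(r)r^{N-1}\,dr\to 0$ (which forces the mass of $\rho_{n}r^{N-1}$ to concentrate near $r=0$, killing the $O(r^{p})$ remainder in any compact sub-domain). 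By rotational invariance one has $\int_{\mathbb{S}^{N-1}}|\xi\cdot z|^{p}d\sigma(z)=K_{N,p}|\xi|^{p}$ for some constant $K_{N,p}$, and $\int_{0}^{\infty}\rho_{n}(r)r^{N-1}\,dr=|\mathbb{S}^{N-1}|^{-1}$ because $\int_{\mathbb{R}^{N}}\rho_{n}=1$. This yields the desired limit with $C(N,p)=K_{N,p}/|\mathbb{S}^{N-1}|$. Boundary effects (since the inner integral is over $\Omega$, not $\mathbb{R}^{N}$) are handled by covering $\Omega$ by small balls, using Lipschitz regularity of $\partial\Omega$, and noting that for a compactly supported smooth approximation the missing mass in $\mathbb{R}^{N}\setminus\Omega$ is $o(1)$ by the concentration assumption.

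To extend from smooth to general $u\in W^{1,p}(\Omega)$, I would approximate by $C^{\infty}$ functions and use that the mapping $u\mapsto I_{n}(u):=\iint|u(x)-u(y)|^{p}|x-y|^{-p}\rho_{n}(x-y)\,dy\,dx$ is uniformly controlled by the Lipschitz seminorm on smooth functions (via the easy direction above) and satisfies a triangle-type inequality in $u$, so a three-term splitting $I_{n}(u)\le I_{n}(u-u_{k})+I_{n}(u_{k})+\text{cross}$ with Minkowski passes to the limit. The hard direction is the reverse: suppose $\liminf_{n}I_{n}(u)<\infty$; we must show $u\in W^{1,p}(\Omega)$. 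The key trick is to test against smooth compactly supported vector fields: for $\varphi\in C_{c}^{\infty}(\Omega;\mathbb{R}^{N})$ one studies
\begin{equation*}
\int_{\Omega}\int_{\Omega}\frac{(u(x)-u(y))}{|x-y|}\,\frac{(x-y)}{|x-y|}\cdot\varphi(x)\,\rho_{n}(x-y)\,dy\,dx,
\end{equation*}
bounds it by $I_{n}(u)^{1/p}$ times an explicit $L^{p'}$-type quantity, and identifies the limit as $\int_{\Omega}u\,\mathrm{div}\,\varphi\,dx$ (up to the same constant $C(N,p)$) by the same Taylor/spherical-averaging computation applied now to $\varphi$. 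Boundedness of this linear functional on $C_{c}^{\infty}$ yields a distributional gradient in $L^{p}$, hence $u\in W^{1,p}(\Omega)$.

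The main obstacle is precisely this converse implication: the estimate on $I_{n}(u)$ is a nonlocal, highly non-linear control, and one has to extract linear information (a bounded distribution) out of it. The duality step above, together with the careful handling of the directional factor $(x-y)/|x-y|$ (which is what makes the spherical average produce $\nabla u$ rather than just $|\nabla u|$), is the technical heart. A second, milder difficulty is the uniformity of the remainder in the Taylor expansion near $\partial\Omega$, which is why the Lipschitz hypothesis on the boundary is used when extending the argument beyond compactly supported data.
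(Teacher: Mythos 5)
This statement is not proved in the paper at all: it is quoted verbatim as Theorem~2 of Bourgain, Brezis and Mironescu \cite{BourBrezMiro} and used as a black box (see the discussion surrounding Lemma~\ref{Glimit} and Remark~\ref{remarkBBM}). There is therefore no internal argument in the paper to compare yours against; the right reference point is the original BBM proof.

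Assessed against that, your forward direction (Taylor expansion, change to polar coordinates, spherical average giving $C(N,p)=\frac{1}{\sigma_{N-1}}\int_{\mathbb{S}^{N-1}}|e\cdot z|^p\,d\sigma(z)$, then density in $W^{1,p}$) is indeed the standard argument. For the converse implication, however, you depart from BBM. Their proof mollifies $u$, uses Jensen's inequality to propagate the bound $I_n(u*\zeta_\varepsilon)\le I_n(u)$ to the smooth regularizations on slightly shrunken domains, applies the already-established forward direction to obtain a uniform $W^{1,p}$ bound, and then invokes weak compactness (this is where $p>1$ enters, as for $p=1$ one only lands in $BV$). Your route is instead a duality argument: symmetrize $T_n(\varphi)=\iint\frac{(u(x)-u(y))(x-y)\cdot\varphi(x)}{|x-y|^2}\rho_n$ to get $\iint u(x)\frac{(x-y)\cdot(\varphi(x)+\varphi(y))}{|x-y|^2}\rho_n$, observe that the piece proportional to $\varphi(x)$ alone vanishes as $n\to\infty$ once $\operatorname{supp}\varphi\subset\subset\Omega$ and $\rho_n$ concentrates, and that the remaining piece converges by Taylor and spherical averaging to $-\frac{1}{N}\int_\Omega u\,\operatorname{div}\varphi\,dx$, while the H\"older bound $|T_n(\varphi)|\le I_n(u)^{1/p}\|\varphi\|_{L^{p'}}$ is uniform along a subsequence on which $I_n$ stays bounded. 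This is a genuinely different and workable route; it has the merit of producing the distributional gradient directly, and avoids Jensen's inequality.

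Two points in your sketch deserve a warning. First, the ``triangle-type inequality'' used to pass from smooth approximations to general $u\in W^{1,p}(\Omega)$ must be applied to $I_n(\cdot)^{1/p}$ (Minkowski in $L^p(\Omega\times\Omega,\rho_n(x-y)\,dy\,dx)$), not to $I_n$ itself, which is not subadditive. Second, that density step needs the uniform bound $\sup_n I_n(v)\le C\|\nabla v\|_{L^p(\Omega)}^p$ for \emph{every} $v\in W^{1,p}(\Omega)$, not just for Lipschitz $v$; on a bounded Lipschitz domain this requires either a $W^{1,p}$-extension operator or a direct line-segment/Minkowski argument, neither of which is spelled out in your proposal and both of which are precisely where the Lipschitz regularity of $\partial\Omega$ is actually used.
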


\subsection{$\Gamma$-Convergence}\hfill\newline 
The limit process in the sense of $\Gamma$-convergence, denoted by $\overset{\Gamma}{\to}$, is the right concept of limit for variational problems since it, together with equicoercivity or compactness, implies that minimizers of $I_{\delta}$ converge to minimizers of $I$ as well as their energies. A nice account on $\Gamma$-convergence is provided in \cite{Braides}. We present now a result about $\Gamma$-convergence of functionals proved in \cite{BellCorPed} that is the core of the proof of Theorem \ref{linear0} and Theorem \ref{theigen}. Let us consider a functional of the form 
\begin{equation}\label{funcional}
\int_{\Omega}\int_{\Omega\cap B(x,\delta)}\omega(x-y,u(x)-u(y))dydx,
\end{equation}
with $u:\Omega\mapsto\mathbb{R}$. In our case the \textit{potential function} $\displaystyle\omega(\overline{x},\overline{y})=\frac{|\overline{y}|^2}{|\overline{x}|^{N+2s}}$ fits into the cases studied in \cite{BellCorPed}, where the type of functional considered orbits around the important case
\begin{equation*}
\omega(\overline{x},\overline{y})=\frac{|\overline{y}|^p}{|\overline{x}|^{\alpha}},\quad \text{where}\ 1<p<+\infty,\,0\leq\alpha<N+p,
\end{equation*}
related to the fractional $p\,$-Laplacian. In particular, the result establishes that, under some hypotheses to be stated below and for some $\beta>0$ to be specified, the sequence of rescaled functionals
\begin{equation}\label{scaledfunctional}
I_{\delta}(u)=\frac{N+\beta}{\delta^{N+\beta}}\int_{\Omega}\int_{\Omega\cap B(x,\delta)}\omega(x-y,u(x)-u(y))dydx.
\end{equation}
has a $\Gamma$-limit (under the strong-$L^p(\Omega)$ topology) given by the local functional 
\begin{equation*}
I_{\delta}(u)\overset{\Gamma}{\to}I_0(u)=\int_{\Omega}W(\nabla u)dx,\quad \text{as}\ \delta\to0^+,
\end{equation*}
for some function $W(\cdot)$ that we construct next. Specifically, the $\Gamma$-limit is recovered in several steps:
\begin{enumerate}

\item \textit{Scaling}: we scale the functional \eqref{funcional} to obtain the functional $I_{\delta}(u)$ given in \eqref{scaledfunctional} with $\beta$ given in the next step.

\item \textit{Blow-up at zero}: we assume that there exists some $\beta\in\mathbb{R}$ such that the following limit exists,
\begin{equation}\label{blowup}
\omega^{\circ}(\overline{x},\overline{y})=\lim\limits_{t\to0^+}\frac{1}{t^{\beta}}\omega(t\overline{x},t\overline{y}).
\end{equation}

\item \textit{Limit density}: the limit density $\overline{\omega}:\Omega\times\mathbb{R}^{1\times N}\mapsto\mathbb{R}$  is given by 
\begin{equation}\label{limitdensity}
\overline{\omega}(F)\vcentcolon=\int_{\mathbb{S}^{N-1}}\omega^{\circ}(z,Fz)d\sigma(z),
\end{equation}
where $\mathbb{S}^{N-1}$ is the $(N-1)$-dimensional unitary sphere.
\item \textit{Convexification}: The $\Gamma$-limit of $I_{\delta}$ as $\delta\to0^+$ is given by
\begin{equation}\label{quasiconvex}
I_0(u)=\int_{\Omega}\overline{\omega}^{c}(\nabla u)dx,
\end{equation}
where,
\begin{equation*}
\overline{\omega}^{c}(F)\vcentcolon=\sup\{v(F):v(\cdot)\leq \overline{\omega}(\cdot)\ \text{and}\ v(\cdot)\ \text{convex}\}.
\end{equation*}
\end{enumerate}
The right scaling required in \eqref{blowup} for the \textit{potential function}
\begin{equation*}
\omega(\tilde{x},\tilde{y})=\frac{|\tilde{y}|^p}{|\tilde{x}|^\alpha}
\end{equation*}
is then given by $\beta=p-\alpha$.\newline 
The key result we use to prove the main results of the paper is the following, which is a simplified version of \cite[Theorem 1]{BellCorPed} but enough for our aim here. Let us denote $\tilde{\Omega}\vcentcolon=\{ z=x-y\;:\; x,\,y\in \Omega\}$.

\begin{theorem}\cite[Theorem 1]{BellCorPed}\label{belcorped} Let $\Omega\subset\mathbb{R}^N$ be a bounded domain with Lipschitz boundary and $\omega:\Omega\times\tilde{\Omega}\times\mathbb{R}\mapsto\mathbb{R}$ satisfying the hypotheses (H1)-(H5) below. 
\begin{itemize}
\item[a)]{\it Compactness:} For each $\delta>0$ let 
\begin{equation*}
u_{\delta}\in \mathcal{A}_{\delta}\vcentcolon=\{v\in L^p(\Omega): v=0\ \text{on}\ \partial_{\delta}\Omega\}\quad \text{with}\ \sup\limits_{\delta>0} I_{\delta}(u_{\delta})<+\infty.
\end{equation*}
Then, there exist 
\begin{equation*}
u\in\mathcal{B}\vcentcolon=\{v\in W^{1,p}(\Omega): v=0\ \text{on}\ \partial\Omega\},
\end{equation*}
such that,
\begin{equation*}
u_{\delta}\to u\ \text{in}\ L^p(\Omega)\quad \text{as}\ \delta\to0^+.
\end{equation*}
\item[b)]{\it Liminf inequality:} For each $\delta>0$ let $u_{\delta}\in\mathcal{A}_{\delta}$ and $u\in\mathcal{B}$ such that $u_{\delta}\to u$ in $L^p(\Omega)$ as $\delta\to0^+$. Then,
\begin{equation*}
I_0(u)\leq \liminf\limits_{\delta\to0^+}I_{\delta}(u_{\delta}).
\end{equation*}
\item[c)]{\it Limsup inequality:} For each $\delta>0$ and $u\in\mathcal{B}$ there exist $u_{\delta}\in\mathcal{A}_{\delta}$, called \textit{recovery sequence}, such that $u_{\delta}\to u$ in $L^p(\Omega)$ as $\delta\to0^+$ and 
\begin{equation*}
\limsup\limits_{\delta\to0^+}I_{\delta}(u_{\delta})\leq I_0(u).
\end{equation*}
\end{itemize}
\end{theorem}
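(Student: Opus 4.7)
The plan is to establish the three assertions (a), (b), (c) via the standard blueprint for $\Gamma$-convergence of nonlocal-to-local energies, with the scaling condition (H2) acting as the unifying ingredient. Recall that (H2) asserts the pointwise existence of the blow-up $\omega^\circ(\bar x,\bar y) = \lim_{t\to 0^+} t^{-\beta}\omega(t\bar x,t\bar y)$. After the polar change of variables $y = x - r\sigma$ with $r\in(0,\delta)$ and $\sigma\in\mathbb{S}^{N-1}$, the inner integral in $I_\delta(u)$ is, for smooth $u$, asymptotically equivalent to $\delta^{N+\beta}\overline\omega(\nabla u(x))/(N+\beta)$, so that the prefactor $(N+\beta)/\delta^{N+\beta}$ in \eqref{scaledfunctional} is the unique choice that produces a finite nontrivial limit, formally equal to $\int_\Omega \overline\omega(\nabla u)\,dx$ with $\overline\omega$ as in \eqref{limitdensity}. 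The upgrade from $\overline\omega$ to its convex envelope $\overline\omega^c$ in \eqref{quasiconvex} will arise from the lower semicontinuity requirement on the $\Gamma$-limit.

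For the compactness statement (a), I would use a coercivity bound from (H1)--(H5), typically of the form $\omega(\bar x,\bar y)\geq c|\bar y|^p/|\bar x|^\alpha$ with $\alpha = p-\beta$, together with $\sup_\delta I_\delta(u_\delta)<\infty$, to deduce a uniform bound
\[
\frac{N+\beta}{\delta^{N+\beta}}\int_\Omega\int_{\Omega\cap B(x,\delta)}\frac{|u_\delta(x)-u_\delta(y)|^p}{|x-y|^\alpha}\,dy\,dx \leq C.
\]
The weight $(N+\beta)\chi_{B(0,\delta)}(x-y)/(\delta^{N+\beta}|x-y|^{\alpha-p})$ is, after renormalization, an admissible radial mollifier, so the Bourgain--Brezis--Mironescu result recalled in Theorem \ref{BoBrMi} forces every weak limit of $\{u_\delta\}$ to lie in $W^{1,p}(\Omega)$; the homogeneous condition on $\partial_\delta\Omega$ passes to the limit to place the candidate $u$ in $\mathcal{B}$. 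Strong $L^p(\Omega)$ compactness then follows from the Rellich--Kondrachov theorem, possibly combined with Proposition \ref{belcor} to pass from the truncated semi-norm to the genuine Gagliardo one.

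For the liminf inequality (b), I would localize over a covering of $\Omega$ by small cubes and perform the change of variable $y = x-\delta z$ with $z\in B(0,1)$. Condition (H2) then gives pointwise convergence of the integrand to $|z|^\beta\omega^\circ(z/|z|,\nabla u(x)\cdot z/|z|)$ along the sequence $(u_\delta(x)-u_\delta(x-\delta z))/\delta$, and a Fatou-type argument combined with integration against $d\sigma(z)$ on $\mathbb{S}^{N-1}$ yields the lower bound $\int_\Omega \overline\omega(\nabla u)\,dx$; the replacement of $\overline\omega$ by $\overline\omega^c$ is then forced by the weak-$L^p$ lower semicontinuity of any $\Gamma$-limit. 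For the limsup (c), the recovery sequence for $u\in C_c^\infty(\Omega)$ is simply $u_\delta = u$, legitimate whenever $\delta < \mathrm{dist}(\mathrm{supp}\,u,\partial\Omega)$: the blow-up (H2) together with dominated convergence gives $\lim_{\delta\to 0^+} I_\delta(u) = \int_\Omega \overline\omega(\nabla u)\,dx$, and a density plus laminated-construction argument, combined with Lipschitz cut-offs to handle the boundary, extends this to general $u\in\mathcal{B}$ with the right-hand side $\int_\Omega\overline\omega^c(\nabla u)\,dx$.

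The main obstacle is the emergence of $\overline\omega^c$, rather than $\overline\omega$, in the liminf inequality. A direct Fatou argument delivers only the weaker bound in terms of $\overline\omega$; upgrading to the convex envelope requires accounting for microscopic oscillations of $u_\delta$ that effectively lower the energy below the bare density. The standard remedy, which I would adapt from \cite{BellCorPed}, is a localization-blow-up scheme: freeze the macroscopic gradient $F = \nabla u(x_0)$ on a small cube, show that the infimum of $I_\delta$ on that cube with affine boundary data compatible with $F$ converges to $\overline\omega^c(F)$, and then piece together the cubes via a Vitali-type covering. A further delicate point is the shrinking nonlocal boundary layer $\partial_\delta\Omega$: the zero boundary constraint there must be reconciled, in the limit, with the classical trace condition on $\partial\Omega$, which forces a careful extension-by-zero and cut-off construction exploiting the Lipschitz regularity of $\partial\Omega$.
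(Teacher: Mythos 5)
Before anything else, note that this paper does not prove Theorem \ref{belcorped} at all: it is quoted, in simplified form, from \cite[Theorem 1]{BellCorPed}, so there is no internal proof to compare with, and your sketch has to be judged against the strategy of that reference. Your overall architecture (concentrating-kernel compactness, blow-up/localization, recovery sequences, relaxation) is the right one, but two steps as written would fail. For the compactness part a): Theorem \ref{BoBrMi} is a convergence-of-energies statement for a \emph{fixed} $u\in L^p(\Omega)$; it is not a compactness criterion and cannot by itself force limits of $\{u_\delta\}$ into $W^{1,p}(\Omega)$. Rellich--Kondrachov is likewise not applicable, because the functions $u_\delta$ are only in the nonlocal energy space, not in $W^{1,p}(\Omega)$. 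What is needed is the companion compactness theorem for concentrating kernels (Bourgain--Brezis--Mironescu, see also \cite{Ponce}), together with a genuine argument showing that the constraint $u_\delta=0$ on the shrinking collar $\partial_\delta\Omega$ yields the trace condition $u=0$ on $\partial\Omega$ in the limit; you only assert that it ``passes to the limit''.

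For the liminf inequality b), your argument does not get off the ground: for a varying sequence $u_\delta\to u$ in $L^p(\Omega)$ the difference quotients $(u_\delta(x)-u_\delta(x-\delta z))/\delta$ have no pointwise (or strong) convergence, so Fatou cannot be invoked; lower semicontinuity must come from convexity of the integrand in its second argument --- which is exactly hypothesis (H2) of the statement ($g$ convex), not the blow-up condition you attribute to it (that is (H5)) --- typically via mollification of $u_\delta$ and Jensen's inequality, as in \cite{BourBrezMiro,BellCorPed}, or via the blow-up method you mention, whose key cell estimate you merely assert. Moreover, your discussion of the convex envelope is backwards: since $\overline{\omega}^{c}\le\overline{\omega}$, a lower bound by $\int_\Omega\overline{\omega}(\nabla u)\,dx$ would be \emph{stronger}, not weaker, than the required inequality $I_0(u)\le\liminf_{\delta\to0^+}I_\delta(u_\delta)$, and such a stronger bound is in general false for nonconvex $\overline{\omega}$ precisely because of oscillations. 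The envelope is the obstruction in the \emph{limsup} inequality c), where oscillating (laminate-type) recovery sequences are needed to bring the energy down to $\int_\Omega\overline{\omega}^{c}(\nabla u)\,dx$; and under the hypotheses actually stated here, $\overline{\omega}$ is already convex, so $\overline{\omega}^{c}=\overline{\omega}$, the laminate discussion is vacuous, and a density-plus-cutoff construction (handling the nonlocal boundary layer, as you do note) is what the recovery sequence really requires.
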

For a general potential function $\omega(\overline{x},\overline{y})$, the hypotheses required in Theorem \ref{belcorped} are quite involved but, as it is noted in \cite{BellCorPed}, for a potential function of the form
\begin{equation*}
\omega(\tilde{x},\tilde{y})=f(\tilde{x})g(\tilde{y}),
\end{equation*} 
the necessary hypotheses are the following:
\begin{itemize}
\item[H1)] $f$ is Lebesgue measurable and $g$ is Borel measurable.
\item[H2)] $g$ is convex.
\item[H3)] There exists $c_0>0$ such that 
\begin{equation*}
\frac{c_0}{|\tilde{x}|^{\alpha}}\leq f(\tilde{x})\quad \text{and}\quad c_0|\tilde{y}|^p\leq g(\overline{y})\qquad\text{for}\ \tilde{x}\in\widetilde{\Omega},\ \tilde{y}\in\mathbb{R}.
\end{equation*}
\item[H4)] There exists $c_1>0$ and $h\in L^1(\mathbb{S}^{N-1})$ with $h\geq0$ such that 
\begin{equation*}
f(\tilde{x})\leq h\left(\frac{\tilde{x}}{|\tilde{x}|}\right)\frac{1}{|\tilde{x}|^{\alpha}}\ \ \text{and}\ \ g(\tilde{y})\leq c_1 |\tilde{y}|^p\qquad\text{for}\ \tilde{x}\in\widetilde{\Omega},\ \tilde{y}\in\mathbb{R}.
\end{equation*}
\item[H5)] The functions $f^{\circ}:\mathbb{R}\backslash\{0\}\mapsto\mathbb{R}$ and $g^{\circ}:\mathbb{R}\mapsto\mathbb{R}$ defined as
\begin{equation*}
f^{\circ}(\tilde{x})\vcentcolon=\lim\limits_{t\to0^+}t^{\alpha}f(t\tilde{x})\quad\text{and}\quad g^{\circ}(\tilde{y})\vcentcolon=\lim\limits_{t\to0^+}\frac{1}{t^p}g(t\tilde{y}),
\end{equation*}
are continuous and, for each compact $K\subset\mathbb{R}$,
\begin{equation*}
\lim\limits_{t\to0^+}\sup\limits_{\tilde{x}\in\mathbb{S}^{N-1}}|t^{\alpha}f(t\tilde{x})-f^{\circ}(\tilde{x})|=0\quad\text{and}\quad \lim\limits_{t\to0^+}\sup\limits_{K\subset\mathbb{R}}|\frac{1}{t^p}g(t\tilde{y})-g^{\circ}(\tilde{y})|=0.
\end{equation*}
\end{itemize}

\vspace{0.2cm}

A straightforward consequence, cf. \cite{Braides}, of $\Gamma$-convergence and the compactness property (Theorem \ref{belcorped}-b) and c) and Theorem \ref{belcorped}-a) respectively), is the following corollary. Notice that under previous hypothesis existence of minimizers for $I_\delta$ is guaranteed, cf. \cite{BellCor}. 

\begin{corollary}\label{corGammaconv} In the conditions of Theorem \ref{belcorped}, let $u_\delta\in\mathbb{H}_0^{\delta,s}(\Omega)$ be a minimizer of $I_\delta$, for any $\delta>0$. Then, there exists $u_0\in H_0^1(\Omega)$ a minimizer of $I_0$ such that, up to a subsequence,
\begin{equation*}
u_\delta\to u_0\mbox{  strong in  }L^2(\Omega)\quad\text{as }\delta\to 0^+,
\end{equation*}
and 
\begin{equation*}
I_\delta(u_\delta)\to I_0(u_0)\quad\text{as }\delta\to 0^+.
\end{equation*} 
\end{corollary}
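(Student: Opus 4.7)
The plan is to follow the classical deduction from $\Gamma$-convergence combined with equicoercivity (Theorem~\ref{belcorped}) to the convergence of a sequence of minimizers and their energies. There are essentially three ingredients: a uniform bound on $I_\delta(u_\delta)$ that unlocks the compactness, the liminf inequality applied along the converging subsequence, and the use of a recovery sequence for the limit candidate as a competitor.

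First, I would establish $\sup_{\delta>0} I_\delta(u_\delta) < \infty$. Fix any $w\in\mathcal{B}=H_0^1(\Omega)$ with $I_0(w)<\infty$ (for instance $w\equiv 0$). Part~c) of Theorem~\ref{belcorped} provides a recovery sequence $\{w_\delta\}\subset\mathcal{A}_\delta$ with $w_\delta\to w$ in $L^2(\Omega)$ and $\limsup_{\delta\to 0^+} I_\delta(w_\delta) \le I_0(w)$. Minimality of $u_\delta$ then yields $I_\delta(u_\delta)\le I_\delta(w_\delta)$, and hence $\limsup_{\delta\to 0^+} I_\delta(u_\delta) \le I_0(w) < +\infty$. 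The compactness part~a) of Theorem~\ref{belcorped} now supplies $u_0\in\mathcal{B}$ and a subsequence (not relabeled) such that $u_\delta\to u_0$ in $L^2(\Omega)$ as $\delta\to 0^+$.

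Next, I would verify that $u_0$ minimizes $I_0$. Given an arbitrary $v\in\mathcal{B}$, let $\{v_\delta\}\subset\mathcal{A}_\delta$ be a recovery sequence for $v$ provided again by part~c). By minimality, $I_\delta(u_\delta)\le I_\delta(v_\delta)$, and applying the liminf inequality (part~b) along $u_\delta\to u_0$ together with the limsup control for $v_\delta$ gives
\[
I_0(u_0)\le\liminf_{\delta\to 0^+}I_\delta(u_\delta)\le\liminf_{\delta\to 0^+}I_\delta(v_\delta)\le\limsup_{\delta\to 0^+}I_\delta(v_\delta)\le I_0(v).
\]
Since $v\in\mathcal{B}$ is arbitrary, $u_0$ is a minimizer of $I_0$. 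Specialising this chain to $v=u_0$ yields $\limsup_{\delta\to 0^+} I_\delta(u_\delta)\le I_0(u_0)\le\liminf_{\delta\to 0^+} I_\delta(u_\delta)$, whence $I_\delta(u_\delta)\to I_0(u_0)$.

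The statement is a textbook consequence of $\Gamma$-convergence, so no serious obstacle is expected; the only mildly delicate point is securing the uniform upper bound $\sup_\delta I_\delta(u_\delta)<\infty$ needed to activate compactness, which, as shown above, is itself forced by testing minimality against the recovery sequence of a single fixed competitor in $\mathcal{B}$.
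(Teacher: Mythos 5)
Your proof is correct and follows the classical deduction of convergence of minimizers and minimum values from $\Gamma$-convergence plus equicoercivity, which is exactly what the paper invokes (the paper does not spell out the argument but simply cites Braides's book, calling the corollary a ``straightforward consequence'' of Theorem~\ref{belcorped}). The one step worth polishing in your write-up is the passage from the chain specialised to $v=u_0$ to the conclusion: you should note explicitly that minimality also gives $\limsup_{\delta\to 0^+} I_\delta(u_\delta)\le\limsup_{\delta\to 0^+} I_\delta(v_\delta)\le I_0(u_0)$, which combined with $I_0(u_0)\le\liminf_{\delta\to 0^+} I_\delta(u_\delta)$ closes the argument; as written the displayed chain only bounds the $\liminf$. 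This is a presentational detail, not a gap.
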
 

\section{Taking the horizon $\delta\to0^+$}\label{horizon0}
In this section we prove Theorem \ref{linear0} and Theorem \ref{theigen}. To that end, we prove first a preliminary result concerning the $\Gamma$-convergence.
\begin{lemma}\label{Glimit} Let us consider the scaled functional 
\begin{equation}\label{funcionaldeltas}
I_{\delta,s}(u)=\frac{2(1-s)}{\delta^{2(1-s)}}\int_{\Omega_{\delta}}\int_{\Omega_{\delta}\cap B(x,\delta)}\frac{|u(x)-u(y)|^2}{|x-y|^{N+2s}}dydx.
\end{equation} 
defined on $\mathbb{H}_0^{\delta,s}(\Omega)$. Then, the $\Gamma$-limit of $I_{\delta,s}(u)$ as $\delta\to 0^+$ is given by
\begin{equation}\label{GlimitFunc}
I_0(u)=\gamma\int_{\Omega}|\nabla u(x)|^2dx,
\end{equation}
for a constant $\gamma=\gamma(N)=\frac{\sigma_{N-1}}{N}>0$, being $\sigma_{N-1}$ the measure of the $(N-1)$-dimensional unit sphere.
\end{lemma}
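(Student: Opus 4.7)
The plan is to recognize $I_{\delta,s}$ as a scaled functional of the type treated in Theorem~\ref{belcorped} and read off the $\Gamma$-limit from the abstract recipe \eqref{blowup}--\eqref{quasiconvex}. The potential function here is
$$\omega(\tilde x,\tilde y) = \frac{|\tilde y|^2}{|\tilde x|^{N+2s}} = f(\tilde x)\,g(\tilde y), \qquad f(\tilde x)=|\tilde x|^{-(N+2s)},\ \ g(\tilde y)=|\tilde y|^2,$$
so we are in the separable case with $p=2$ and $\alpha=N+2s$. The natural scaling exponent is then $\beta=p-\alpha=2(1-s)-N$, which gives $N+\beta=2(1-s)$, exactly the factor appearing both in the prefactor and in the power of $\delta$ in \eqref{funcionaldeltas}. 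Thus $I_{\delta,s}$ coincides with the scaled functional \eqref{scaledfunctional}.

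Next I would verify hypotheses (H1)--(H5). Measurability in (H1) is immediate, $g(\tilde y)=|\tilde y|^2$ is convex (H2), and both pointwise bounds in (H3)--(H4) hold with $c_0=c_1=1$ and $h\equiv 1\in L^1(\mathbb{S}^{N-1})$. For (H5) note that $f$ and $g$ are homogeneous of degrees $-\alpha$ and $p$ respectively, so $t^\alpha f(t\tilde x)=f(\tilde x)$ and $t^{-p}g(t\tilde y)=g(\tilde y)$ identically in $t$; hence $f^\circ=f$ on $\mathbb{R}^N\setminus\{0\}$ and $g^\circ=g$, both continuous, and the required uniform convergence is trivial. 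Consequently $\omega^\circ=\omega$.

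The limit density \eqref{limitdensity} now reads, for $F\in\mathbb{R}^{1\times N}\simeq \mathbb{R}^N$,
$$\overline\omega(F)=\int_{\mathbb{S}^{N-1}}\frac{|Fz|^2}{|z|^{N+2s}}\,d\sigma(z)=\int_{\mathbb{S}^{N-1}}|Fz|^2\,d\sigma(z).$$
Using the isotropy identity $\int_{\mathbb{S}^{N-1}}z_iz_j\,d\sigma(z)=\frac{\sigma_{N-1}}{N}\delta_{ij}$ (by symmetry plus $\sum_i\int_{\mathbb{S}^{N-1}} z_i^2\,d\sigma = \sigma_{N-1}$), one obtains $\overline\omega(F)=\frac{\sigma_{N-1}}{N}|F|^2=\gamma|F|^2$. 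Since this quadratic form is already convex, the convexification is trivial, and \eqref{quasiconvex} produces exactly $I_0(u)=\gamma\int_\Omega|\nabla u|^2\,dx$.

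The only subtlety, and the step I expect to require the most care, is the mismatch between the inner integration domain in Theorem~\ref{belcorped}, namely $\Omega\times(\Omega\cap B(x,\delta))$, and the one in $I_{\delta,s}$, which is $\Omega_\delta\times(\Omega_\delta\cap B(x,\delta))$. Because admissible functions vanish on $\partial_\delta\Omega$, extending by zero identifies $\mathbb{H}_0^{\delta,s}(\Omega)$ with the natural admissible class $\mathcal{A}_\delta$ of Theorem~\ref{belcorped}, and the additional contributions (coupling between $\Omega$ and $\partial_\delta\Omega$) are precisely the nonlocal Dirichlet cross-terms required for the energy to be coercive over the trace-free class; the framework of \cite{BellCorPed} is designed to cover exactly this situation. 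Once this identification is accepted, applying Theorem~\ref{belcorped} yields \eqref{GlimitFunc}, which is the shortest and cleanest route.
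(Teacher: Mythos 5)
Your proposal is correct and follows essentially the same route as the paper's proof: identify the separable potential $\omega(\tilde x,\tilde y)=|\tilde y|^2/|\tilde x|^{N+2s}$, verify (H1)--(H5), use the homogeneity to see $\omega^\circ=\omega$, compute $\overline\omega(F)=\frac{\sigma_{N-1}}{N}|F|^2$ via the spherical moment identity, and note the convexification is trivial. The paper does not spell out the $\Omega_\delta$ versus $\Omega$ domain point that you flag at the end, but it is implicitly handled in exactly the way you describe (extension by zero into $\partial_\delta\Omega$ and the admissible class $\mathcal{A}_\delta$ of Theorem~\ref{belcorped}), so your remark is a welcome clarification rather than a deviation.
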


\begin{proof}
Let us follow the construction process of the limit density as specified in \eqref{scaledfunctional}-\eqref{quasiconvex}. First, let us observe that, since the potential function 
\begin{equation*}
\omega(\tilde{x},\tilde{y})=\frac{|\tilde{y}|^2}{|\tilde{x}|^{N+2s}},
\end{equation*}
then $\omega(\tilde{x},\tilde{y})=f(\tilde{x})g(\tilde{y})$ with $f(\tilde{x})=\frac{1}{|\tilde{x}|^{N+2s}}$ and $g(\tilde{y})=|\tilde{y}|^2$. Hence, in our case, $p=2$, $\alpha=N+2s<N+2$ and hypotheses (H1)-(H5) are clearly satisfied. Next we construct the function given in \eqref{blowup}. Since $\omega(\tilde{x},\tilde{y})=\frac{|\tilde{y}|^2}{|\tilde{x}|^{N+2s}}$ is homogeneous, it is immediate that, taking $\beta=2-(N+2s)$ in \eqref{blowup},
\begin{equation*}
\begin{split}
\omega^{\circ}(\tilde{x},\tilde{y})&=\lim\limits_{t\to0^+}\frac{1}{t^\beta}\omega(t\tilde{x},t\tilde{y})=\lim\limits_{t\to0^+}\frac{1}{t^{2-(N-2s)}}\frac{t^2|\tilde{y}|^2}{t^{N+2s}|\tilde{x}|^{N+2s}}=\frac{|\tilde{y}|^2}{|\tilde{x}|^{N+2s}}\\
&=\omega(\tilde{x},\tilde{y}).
\end{split}
\end{equation*}
We continue by constructing the function $\overline{\omega}(F)$ given in \eqref{limitdensity}. Because of 
\begin{equation*}
\omega^{\circ}(\tilde{x},\tilde{y})=\omega(\tilde{x},\tilde{y})=\omega(|\tilde{x}|,|\tilde{y}|),
\end{equation*}
we find that, for a given vector $F\in\mathbb{R}^{1\times N}$, the function $\overline{\omega}(F):\Omega\times\mathbb{R}^{1\times N}\mapsto\mathbb{R}$ is given by
\begin{equation*}
\begin{split}
\overline{\omega}(F)&=\int_{\mathbb{S}^{N-1}}\omega^{\circ}(z,F\cdot z)d\sigma=\int_{\mathbb{S}^{N-1}}\omega(|z|,|F\cdot z|)d\sigma=\int_{\mathbb{S}^{N-1}}\omega(1,|F\cdot z|)d\sigma\\
&=\int_{\mathbb{S}^{N-1}}|F\cdot z|^2d\sigma=|F|^2\int_{\mathbb{S}^{N-1}}\frac{|F\cdot z|^2}{|F|^2}d\sigma=|F|^2\int_{\mathbb{S}^{N-1}}\left|\frac{F}{|F|}\cdot z\right|^2d\sigma\\
&=|F|^2\int_{\mathbb{S}^{N-1}}|e\cdot z|^2d\sigma,
\end{split}
\end{equation*}
with $e=\frac{F}{|F|}\in\mathbb{S}^{N-1}$ an unitary vector. Moreover, it is easy to see that, given a unitary vector $e\in\mathbb{S}^{N-1}$,
\begin{equation}\label{contbr}
\int_{\mathbb{S}^{N-1}}|e\cdot z|^2d\sigma=\frac{\sigma_{N-1}}{N},
\end{equation}
where $\sigma_{N-1}$ is the area of the unitary sphere $\mathbb{S}^{N-1}$. Hence, we conclude
\begin{equation*}
\overline{\omega}(F)=\frac{\sigma_{N-1}}{N}|F|^2=\gamma(N)|F|^2.
\end{equation*}
At last, since the function $\overline{\omega}(F)=\gamma(N) |F|^2$ is convex, the convexification $\overline{\omega}^{c}(F)$ coincides with the function $\overline{\omega}(F)$ itself. Therefore, we conclude that the $\Gamma$-limit of the functional $I_{\delta,s}(\cdot)$ as $\delta\to 0^+$ is given by
\begin{equation*}
I_0(u)=\int_{\Omega}\overline{\omega}^{c}(\nabla u(x))dx=\int_{\Omega}\overline{\omega}(\nabla u(x))dx=\gamma(N) \int_{\Omega}|\nabla u(x)|^2dx.
\end{equation*}
\end{proof}
At this point it is worth mentioning that, up to the correct scaling $\displaystyle\frac{2(1-s)}{\delta^{2(1-s)}}$, all the functionals $I_{\delta,s}$ will $\Gamma$-converge to the same limit independently of $s$, because, in the previous proof, the denominator of $\omega^{\circ}(\overline{x},\overline{y})$ plays no role in the integration over the unit sphere $\mathbb{S}^{N-1}$.
\begin{proof}[Proof of Theorem \ref{linear0}]\hfill\newline 
Let $\mathcal{F}(u)$ be the energy functional associated to \eqref{problemareescaled}, i.e.,
\begin{equation*}
\mathcal{F}(u)=\frac{c_{N,s}}{4}\int_{\Omega_{\delta}}\int_{\Omega_{\delta}\cap B(x,\delta)}\frac{|u(x)-u(y)|^2}{|x-y|^{N+2s}}dydx-\frac{\delta^{2(1-s)}}{\kappa(N,s)}\int_{\Omega}fudx,
\end{equation*}
so that the solution $u^{\delta,s}$ to \eqref{problemareescaled} is a minimum of $\mathcal{F}(u)$. Next, let us define $\kappa(N,s)\vcentcolon=\frac{4(1-s)}{c_{N,s}\gamma(N)}$, with $\gamma(N)$ the constant given in Lemma \ref{Glimit} and consider the rescaled functional 
\begin{equation*}
\begin{split}
\mathcal{F}_{\delta,s}(u)&\vcentcolon=\frac{1}{\delta^{2(1-s)}}\mathcal{F}(u)\\
												&=\frac{c_{N,s}}{4(1-s)}\frac{(1-s)}{\delta^{2(1-s)}}\int_{\Omega_{\delta}}\int_{\Omega_{\delta}\cap B(x,\delta)}\frac{|u(x)-u(y)|^2}{|x-y|^{N+2s}}dydx-\frac{1}{\kappa(N,s)}\int_{\Omega}fudx\\
&=\frac{1}{\kappa(N,s)}\left(\frac{1}{2\gamma(N)}I_{\delta,s}(u)-\int_{\Omega}fudx\right).
\end{split}
\end{equation*}
Clearly, if $u^{\delta,s}$ is a solution to \eqref{problemareescaled}, or equivalently, a minimizer of $\mathcal{F}(u)$, then it is also a minimizer of the rescaled functional $\mathcal{F}_{\delta,s}$. As we see in the proof of Lemma \ref{Glimit}, the hypotheses (H1)-(H5) of Theorem \ref{belcorped} are satisfied and the $\Gamma$-limit of $I_{\delta,s}(u)$ as $\delta\to 0^+$ is given by \eqref{GlimitFunc}. Hence, the $\Gamma$-limit of $\mathcal{F}_{\delta,s}$ as $\delta\to 0^+$ is 
\begin{equation*}
\begin{split}
\mathcal{F}_0(u)&=\frac{1}{\kappa(N,s)}\left(\frac{1}{2} I_0(u)-\int_{\Omega}fudx\right)\\
								&=\frac{1}{\kappa(N,s)}\left(\frac{1}{2} \int_{\Omega}|\nabla u|^2dx-\int_{\Omega}fudx\right).
\end{split}
\end{equation*}
We conclude that the $\Gamma$-limit functional is, up to a constant, the energy functional associated to \eqref{problema0}. On the other hand, as $\Gamma$-convergence implies convergence of optimal energies, if $u^{\delta,s}\in \mathbb{H}_0^{\delta,s}(\Omega)$ is the minimizer of $\mathcal{F}_{\delta,s}$ and $u^{0,1}$ the minimizer of $\mathcal{F}_0$, then 
\begin{equation*}
\lim_{\delta\to 0^+} \mathcal{F}_{\delta,s}(u^{\delta,s})=\mathcal{F}_{0}(u^{0,1}),
\end{equation*}
Indeed, if $u^{\delta,s}\in \mathbb{H}_0^{\delta,s}(\Omega)$ is the minimizer of $\mathcal{F}_{\delta,s}$, so that $u^{\delta,s}$ is a solution to problem \eqref{problemareescaled}, we have
\begin{equation*}
\mathcal{F}_{\delta,s}(u_\delta)=-\frac{1}{2\kappa(N,s)} I_{\delta,s}(u^{\delta,s}).
\end{equation*}
Therefore, the sequence $\{I_{\delta,s}(u^{\delta,s})\}_{\delta>0}$ is bounded and, by Theorem \ref{belcorped}, there exists $u^{0,1}\in H_0^1(\Omega)$ such that, up to a subsequence $u^{\delta,s}\to u^{0,1}$ strong in $L^2(\Omega)$. Moreover, $\Gamma$-convergence of $\mathcal{F}_{\delta,s}$ to $\mathcal{F}_0$ implies that $u^{0,1}$ is the unique minimizer of $\mathcal{F}_0$, and therefore the solution of $(P_0^1)$. 
\end{proof}

\begin{proof}[Proof of Theorem \ref{theigen}]\hfill\newline
First we prove the result for the sequence of first eigenvalues and corresponding eigenfunctions. Recall that first eigenvalues, $\lambda_1^{\delta,s}$ and $\lambda_1^{0,1}$, are simple for both $(EP_\delta^s)$, for any $\delta>0$, and for the limit problem $(EP_0^1)$ respectively. Let $I_\delta^{(1)}$ and $I^{(1)}$ be the restricted functionals
\begin{equation*}
I_{\delta,s}^{(1)}(u)=\left\{ 
\begin{array}{ll} 
I_{\delta,s}(u) &\mbox{if  }\|u\|_{L^2(\Omega)}=1,\\
+\infty &\mbox{otherwise},
\end{array}\right.
\end{equation*}
and
\begin{equation*}
I^{(1)}(u)=\left\{ 
\begin{array}{ll} 
I(u) &\mbox{if  }\|u\|_{L^2(\Omega)}=1,\\
+\infty &\mbox{otherwise}, 
\end{array}\right.
\end{equation*}
respectively, with $I_{\delta,s}(u)$ defined in \eqref{funcionaldeltas} and $I(u)=\int_\Omega |\nabla u|^2\,dx$. Let us show that $I_{\delta,s}^{(1)}\overset{\Gamma}{\to} I^{(1)}$. Actually, this is derived easily from Lemma \ref{Glimit} and Theorem \ref{belcorped}:
\begin{enumerate}
\item {\it Liminf inequality:} Given $u_\delta\to u$ strong in $L^2(\Omega)$, then
\begin{equation*}
I^{(1)}(u)\le \liminf_{\delta\to 0^+} I_{\delta,s}^{(1)}(u_\delta).
\end{equation*} 
this is consequence of Theorem \ref{belcorped}-b), and the fact that strong convergence in $L^2(\Omega)$ implies convergence of the norms. 
\item {\it Limsup inequality:} Given $u\in H_0^1(\Omega)$, with $\|u\|_{L^2(\Omega)}=1$, by Theorem \ref{belcorped}-c), there exists $u_\delta\in \mathbb{H}_0^{\delta,s}(\Omega)$ such that $u_\delta\to u$ strong in $L^2(\Omega)$ as $\delta \to 0^+$, and
\begin{equation*}
\limsup_{\delta\to 0^+}I_{\delta,s} (u_\delta) \le I(u).
\end{equation*}
Calling $v_\delta =\frac{u_\delta}{\|u_\delta\|_{L^2(\Omega)}}$, it is elementary to check that $v_\delta\to u$ strong in $L^2(\Omega)$ as $\delta \to 0^+$ and $I_\delta^{(1)}(v_\delta)=\frac{I_\delta(u_\delta)}{\|u_\delta\|^2}$, so that
\begin{equation*}
\limsup_{\delta\to 0^+} I_{\delta,s}^{(1)}(v_\delta)=\limsup_{\delta\to 0^+} I_{\delta,s}(u_\delta)\le I^{(1)}(u).
\end{equation*}
\end{enumerate}
Since $I_{\delta,s}^{(1)}$ and $I_0^{(1)}$ are restricted functionals of $I_{{\delta},s}$ and $I_0$, compactness hold by Theorem \ref{belcorped}-a). Therefore, by Corollary \ref{corGammaconv}, we have the convergence of optimal energies,
\begin{equation*}
\kappa(N,s)\frac{\lambda_1^{\delta,s}}{\delta^{2(1-s)}}\to \lambda_1^{0,1},\quad\text{as }\delta\to 0^+,
\end{equation*}
and there exists a subsequence $\delta_n\to 0$ as $n\to\infty$, such that
\begin{equation*}
\varphi_1^{\delta_n,s}\to \varphi_1^{0,1}\quad \text{as }n\to+\infty.
\end{equation*} 


Now we prove the result for the second eigenvalue and the corresponding eigenfunction. As above, we show $\Gamma$-convergence of the restricted functionals
\begin{equation*}
     I_{\delta,s}^{(2)}(u)=\left\{\begin{array}{rl}
     I_{\delta,s}(u) &\quad\mbox{if}\quad u\in \mathbb{P}_{2}^{\delta}\text{ and }\|u\|_{L^2(\Omega)}=1,\\
                         +\infty &\quad\mbox{otherwise,}
		 \end{array}\right.
\end{equation*}
and
\begin{equation*}
     I^{(2)}(u)=\left\{\begin{array}{rl}
     I(u) &\quad\mbox{if}\quad u\in \mathbb{P}_{2}^{0}\text{ and }\|u\|_{L^2(\Omega)}=1,\\
                         +\infty &\quad\mbox{otherwise,}
		 \end{array}\right.
\end{equation*} 
where $\mathbb{P}_{2}^{\delta}$ is defined in \eqref{projectspace}, i.e.,
\begin{equation*}
\mathbb{P}_{2}^{\delta}=\left\{u\in\mathbb{H}_0^{\delta,s}(\Omega):\langle u,\varphi_1^{\delta,s}\rangle_{L^2(\Omega)}=0\right\},
\end{equation*}
and 
\begin{equation*}
\mathbb{P}_{2}^{0}=\left\{u\in H_0^1(\Omega):\langle u,\varphi_1^{0,1}\rangle_{L^2(\Omega)}=0\right\}.
\end{equation*}
$\Gamma$-convergence of $I_{\delta,s}^{(2)}$ to $I^{(2)}$ is again consequence of Lemma \ref{Glimit} and Theorem \ref{belcorped}:
\begin{enumerate}
\item {\it Liminf inequality:} Given $u_\delta\to u$ strong in $L^2(\Omega)$, with $\|u_\delta\|_{L^2(\Omega)}=1$, then $\|u\|_{L^2(\Omega)}=1$ and, hence, up to a subsequence,
\begin{equation*}
\langle u_{\delta},\varphi_1^{\delta,s}\rangle_{L^2(\Omega)}\to \langle u,\varphi_1^{0,1}\rangle_{L^2(\Omega)} \quad\text{as }\delta\to0^+,
\end{equation*}
since $\varphi_1^{\delta,s}\to \varphi_1^{0,1}$ strong in $L^2(\Omega)$, up to a subsequence. Finally, because of Theorem \ref{belcorped}-b), we find
\begin{equation}\label{liminfi}
I^{(2)}(u)\leq\liminf\limits_{\delta}I_{\delta,s}^{(2)}(u_{\delta}).
\end{equation}
\item {\it Limsup inequality:} Given $u\in \mathbb{P}_2^0$, with $\|u\|_{L^2(\Omega)}=1$, we construct a recovery sequence $u_\delta \in \mathbb{P}_2^\delta$, with $\|u_\delta\|_{L^2(\Omega)}=1$, such that $u_\delta \to u$ strong $L^2(\Omega)$ and 
\begin{equation*}
\limsup_{\delta\to 0^+} I_{\delta,s}^{(2)}(u_\delta) \le I^{(2)}(u).
\end{equation*}
Define
\begin{equation*}
u_{\delta}=\eta_{\delta}\varphi_{1}^{\delta,s}+\mu_{\delta}u,
\end{equation*}
where the constants $\eta_\delta$, $\mu_\delta$ are determined by imposing $u_{\delta}\in\mathbb{P}_{2}^{\delta}$ together with $\|u_{\delta}\|_{L^2(\Omega)}=1$. These two conditions give us
\begin{equation*}
\left\{\begin{array}{l}
     \eta_{\delta}+\mu_\delta\langle u,\varphi_1^{\delta,s}\rangle_{L^2(\Omega)}=0,\\
     \eta_{\delta}^2+2\eta_\delta\mu_\delta\langle u,\varphi_1^{\delta,s}\rangle_{L^2(\Omega)}+\mu_\delta^2=1.
		 \end{array}\right.
\end{equation*}
Since $\langle u,\varphi_1^{\delta,s}\rangle_{L^2(\Omega)}\to 0$, then $\eta_\delta\to0$ and $\mu_\delta\to1$ as $\delta\to0^+$. Then, noticing that $I_{\delta,s}$ is quadratic,
\begin{equation*}
I_{\delta,s}^{(2)}(u_\delta) =\eta_{\delta}^2 I_{\delta,s}(\varphi_{1}^{\delta,s})+2\eta_{\delta}\mu_{\delta}\langle u,\varphi_1^{\delta,s}\rangle_{L^2(\Omega)}+\mu_{\delta}^2I_{\delta,s}(u).
\end{equation*}
and taking $\delta \to 0^+$, we arrive at
\begin{equation*}
\begin{split}
\lim\limits_{\delta\to0^+} I_{\delta,s}^{(2)}(u_\delta)&= \lim\limits_{\delta\to0^+}\left(\eta_{\delta}^2 I_{\delta,s}(\varphi_{1}^{\delta,s})+2\eta_{\delta}\mu_{\delta}\langle u,\varphi_1^{\delta,s}\rangle_{L^2(\Omega)}+\mu_{\delta}^2I_{\delta,s}(u)\right)\\
&=\lim_{\delta \to 0^+} I_{\delta,s}(u)=I_0(u),
\end{split}
\end{equation*}
where the last equality is due to Theorem \ref{BoBrMi} (see Remark \ref{remarkBBM} below).

\end{enumerate}

Consequently, by Corollary \ref{corGammaconv}, 
\begin{equation*}
\kappa(N,s)\frac{\lambda_2^{\delta,s}}{\delta^{2(1-s)}}\to \lambda_2^{0,1},\quad\text{as }\delta\to0^+,
\end{equation*}
and, for any sequence $\{\varphi_2^{\delta,s}\}_{\delta>0}$ such that $\varphi_2^{\delta,s}$ is an eigenfunction of $(EP_\delta^s)$ associated to $\lambda_2^{\delta,s}$, there exists a subsequence $\{\varphi_2^{\delta_n,s}\}_{\delta_n>0}$ with $\delta_n\to 0$ as $n\to+\infty$ and $\varphi_2^{0,1}\in H_0^1(\Omega)$, an eigenfunction of $(EP_0^1)$ associated to $\lambda_2^{0,1}$, such that 
\begin{equation*}
\varphi_2^{\delta_n,s}\to \varphi_2^{0,1},\quad\text{as }n\to+\infty.
\end{equation*}
Once we have proved the convergence for the second eigenvalue and the second eigenfunction, the rest of the argument follows by induction interatively.\newline 
Therefore, we have the convergences
\begin{equation*}
\kappa(N,s)\frac{\lambda_k^{\delta,s}}{\delta^{2(1-s)}}\to\lambda_k^{0,1}\quad \text{as }\delta\to 0^+,\quad \text{for all }k\in\mathbb{N}
\end{equation*}
and, up to a subsequence,
\begin{equation}\label{cnveigenf}
\varphi_k^{\delta,s}\to\varphi_k^{0,1}\ \text{in }L^2(\Omega)\quad \text{as }\delta\to 0^+,\quad \text{for all }k\in\mathbb{N}.
\end{equation}
Moreover, because of \eqref{cnveigenf}, we also have the convergence of the orthogonal spaces
\begin{equation}\label{cnvproject}
\mathbb{P}_k^{\delta}\to\mathbb{P}_k^{0}\quad \text{as }\delta\to 0^+,\quad \text{for all }k\in\mathbb{N}.
\end{equation}
Next, given an eigenvalue $\lambda_k^{\delta,s}$, $k\in\mathbb{N}$, that, by \eqref{finitemultiplicity} has finite multiplicity, i.e., $1\leq m_k^{\delta,s}<\infty$, let us denote by $\{\varphi_{k,i}^{\delta,s}\}_{i=1}^{m_k^{\delta,s}}$ the basis of the subspace of eigenfunctions associated to the eigenvalue $\lambda_k^{\delta,s}=\ldots=\lambda_{j+m_k^{\delta,s}-1}^{\delta,s}$. Observe that, by \eqref{increasingeigenvalues}, 
\begin{equation}\label{chain}
0<\lambda_1^{\delta,s}<\lambda_2^{\delta,s}\leq\ldots\leq\lambda_{k-1}^{\delta,s}<\lambda_k^{\delta,s}=\ldots=\lambda_{k+m_k^{\delta,s}-1}^{\delta,s}<\lambda_{k+m_k^{\delta,s}}^{\delta,s}\leq\ldots
\end{equation}
Let us rewrite \eqref{chain} without repeat the eigenvalues according its multiplicity, i.e., we have a strictly increasing sequence of eigenvalues
\begin{equation}\label{chain2}
0<\lambda_1^{\delta,s}<\lambda_2^{\delta,s}<\ldots<\lambda_{k-1}^{\delta,s}<\lambda_k^{\delta,s}<\lambda_{k+1}^{\delta,s}<\ldots
\end{equation}
with finite multiplicities $\{m_k^{\delta,s}\}_{k\in\mathbb{N}}$. Because of \eqref{cnveigenf} and \eqref{cnvproject} we conclude
\begin{equation*}
m_k^{\delta,s}\to m_k^{0,1}\ \text{as }\delta\to 0^+,\quad \text{for all }k\in\mathbb{N}. 
\end{equation*}
\end{proof}
\begin{remark} \label{remarkBBM}
In order to clarify why the scaling in $\Gamma$-convergence result is natural in our context, it is of interest to obtain the upper bound
\begin{equation*}
\lim_{\delta \to 0^+} \frac{\kappa(N,s)}{\delta^{2(1-s)}}\lambda_1^{\delta,s}\le \lambda_1^{0,1}
\end{equation*} 
as a consequence of Theorem \ref{BoBrMi}. Since $H_0^1(\Omega)\subset\mathbb{H}_0^{\delta,s}(\Omega)$ for all $\delta>0$ (it is enough to extend $\varphi\in H_0^1(\Omega)$ as $\varphi\equiv0$ on $\partial_{\delta}\Omega$), then 
\begin{equation*}
\begin{split}
\lambda_1^{\delta,s}&=\min\limits_{\substack{ u\in \mathbb{H}_0^{\delta,s}(\Omega)\\ \|u\|_{L^2(\Omega)}=1}}\frac{c_{N,s}}{2}\int_{\Omega_{\delta}}\int_{\Omega_{\delta}\cap B(x,\delta)}\frac{|u(x)-u(y)|^2}{|x-y|^{N+2s}}dydx\\
&\leq \frac{c_{N,s}}{2}\int_{\Omega_{\delta}}\int_{\Omega_{\delta}\cap B(x,\delta)}\frac{|\varphi_1^{0,1}(x)-\varphi_1^{0,1}(y)|^2}{|x-y|^{N+2s}}dydx,
\end{split}
\end{equation*}
where $\varphi_1^{0,1}$ is the first eigenfunction of the Laplace operator with $\|\varphi_1^{0,1}\|_{L^2(\Omega)}=1$. In order to apply Theorem \ref{BoBrMi}, let us rewrite the above inequality as
\begin{equation*}
\begin{split}
\lambda_1^{\delta,s}&\leq \frac{c_{N,s}}{2}\int_{\Omega_{\delta}}\int_{\Omega_{\delta}\cap B(x,\delta)}\frac{|\varphi_1^{0,1}(x)-\varphi_1^{0,1}(y)|^2}{|x-y|^{N+2s}}dydx\\
&=\frac{c_{N,s}}{2}\int_{\Omega_{\delta}}\int_{\Omega_{\delta}}\frac{|\varphi_1^{0,1}(x)-\varphi_1^{0,1}(y)|^2}{|x-y|^{2}}\frac{\chi_{B(0,\delta)}(|x-y|)}{|x-y|^{N+2(s-1)}}dydx\\
&=\int_{\Omega_{\delta}}\int_{\Omega_{\delta}}\frac{|\varphi_1^{0,1}(x)-\varphi_1^{0,1}(y)|^2}{|x-y|^{2}}\rho_{\delta}(|x-y|)dydx,
\end{split}
\end{equation*}
with $\displaystyle\rho_{\delta}(z)=\frac{c_{N,s}}{2}\frac{\chi_{B(0,\delta)}(|z|)}{|z|^{N+2(s-1)}}$ and $\chi_A$ the characteristic function of the set $A$. In order to fulfill the hypotheses of Theorem \ref{BoBrMi} we normalize $\rho_{\delta}(z)$. Since
\begin{equation*}
\int \rho_{\delta}(z)dz=\frac{\sigma_{N-1}c_{N,s}}{4(1-s)}\delta^{2(1-s)},
\end{equation*}
with $\sigma_{N-1}$ the surface of the unitary sphere $\mathbb{S}^{N-1}$, the sequence of radial mollifiers
\begin{equation*}
\overline{\rho}_{\delta}(z)=\frac{4(1-s)}{\sigma_{N-1}}\frac{1}{\delta^{2(1-s)}}\frac{\chi_{B(0,\delta)}(|z|)}{|z|^{N+2(s-1)}},
\end{equation*}
satisfy the hypotheses of Theorem \ref{BoBrMi}. Observe that the scaling in $\delta$ coincides with the one of Theorem \ref{belcorped}. Therefore, we get
\begin{equation*}
\frac{4(1-s)}{\sigma_{N-1}c_{N,s}}\frac{\lambda_1^{\delta,s}}{\delta^{2(1-s)}}\leq \int_{\Omega_{\delta}}\int_{\Omega_{\delta}}\frac{|\varphi_1^{0,1}(x)-\varphi_1^{0,1}(y)|^2}{|x-y|^{2}}\overline{\rho}_{\delta}(|x-y|)dydx,
\end{equation*}
and because of Theorem \ref{BoBrMi}, we conclude
\begin{equation}\label{eigenpre}
\begin{split}
\lim\limits_{\delta\to0^+}\frac{4(1-s)}{\sigma_{N-1}c_{N,s}}\frac{\lambda_1^{\delta,s}}{\delta^{2(1-s)}}&\leq \lim\limits_{\delta\to0^+}\int_{\Omega_{\delta}}\int_{\Omega_{\delta}}\frac{|\varphi_1^{0,1}(x)-\varphi_1^{0,1}(y)|^2}{|x-y|^{2}}\overline{\rho}_{\delta}(|x-y|)dydx\\
&=\gamma\int_{\Omega}|\nabla \varphi_1^{0,1}|^2dx
\end{split}
\end{equation}
The constant $\gamma=\gamma(N,p)$ appearing in Theorem \ref{BoBrMi} takes the form (see \cite{BourBrezMiro})
\begin{equation*}
\gamma(N,p)=\frac{1}{\sigma_{N-1}}\int_{\mathbb{S}^{N-1}}|z\cdot e|^pd\sigma,
\end{equation*}
for any unitary vector $e\in\mathbb{S}^{N-1}$. Then, for $p=2$, we have $\gamma(N,2)=\frac{1}{N}$. Simplifying \eqref{eigenpre} and taking in mind that $\|\varphi_1^{0,1}\|_{L^2(\Omega)}=1$, we conclude
\begin{equation*}
\lim\limits_{\delta\to0^+}\kappa(N,s)\frac{\lambda_1^{\delta,s}}{\delta^{2(1-s)}}\leq\int_{\Omega}|\nabla \varphi_1^{0,1}|^2dx=\lambda_1^{0,1},
\end{equation*}
where $\kappa(N,s)=\frac{4N(1-s)}{\sigma_{N-1}c_{N,s}}$ is the constant appearing in \eqref{constantkappa}.\newline 

\end{remark}


\section{Taking the horizon $\delta\to+\infty$}\label{horizoninfty}

Because of the definition of the operator $(-\Delta)_{\delta}^s$, as a restriction of the fractional Laplacian, it is plausible that if we take $\delta\to+\infty$ one recovers the definition of the standard fractional Laplacian, namely
\begin{equation*}
\lim\limits_{\delta\to +\infty}(-\Delta)_{\delta}^s u(x)=c_{N,s}P.V.\int_{\mathbb{R}^N}\frac{u(x)-u(y)}{|x-y|^{N+2s}}dy.
\end{equation*}
Our interest is to explore this convergence by proving Theorems \ref{linearinf} and \ref{theigen2}. Prior to that, we would like to mention that a result in this line was given in \cite[Theorem 3.1]{DeliaGun}, where it is showed the explicit convergence rate 
\begin{equation*}
\|u^{\delta,s}-u^{\infty,s}\|_{\mathbb{H}_0^{\delta,s}}\le \frac{c}{(\delta-I)^{2s}}\|u^{\infty,s}\|_{L^2(\Omega)},
\end{equation*}
being $u^{\delta,s}$ and $u^{\infty,s}$ the solutions of $(P_\delta^s)$ and $(P_\infty^s)$ respectively, $c>0$ is a constant independent of $\delta$ and $I=\min\{R>0:\Omega\subset B(x,R)\ \forall x\in\Omega\}$. This is an important result from the point of view of the numerical approximation of problems involving the fractional Laplacian. Nevertheless, this reference does not address spectral problems, and the proof of \cite[Theorem 3.1]{DeliaGun} strongly relies on the linearity of the problem \eqref{problema}. Instead, the proof of Theorem \ref{linearinf} and Theorem \ref{theigen2} are based on a general result about $\Gamma$-convergence that works for both the linear and nonlinear setting (we address the $p\,$-fractional Laplacian case in a forthcoming paper). 
\begin{proof}[Proof of Lemma \ref{isomorfia}] 
Using \eqref{normHfrac} and \eqref{normHHfrac}, we have
\begin{equation}\label{normcomparison3}
\vertiii{v}_{\mathcal{H}_0^s}^2-\vertiii{v}_{\mathbb{H}_0^{\delta,s}}^2=\iint\limits_{\mathcal{D}\backslash\mathcal{D}_{\delta}}\frac{|v(x)-v(y)|^2}{|x-y|^{N+2s}}dydx\geq0,
\end{equation}
because $\mathcal{D}_{\delta}\subset\mathcal{D}$ for all $\delta>0$. Thus, given $v\in\mathcal{H}_0^s(\Omega)$, since $v=0$ on $\Omega^c$ we have $v=0$ on $\partial_{\delta}\Omega$ and, then, the restriction operator,
\begin{equation*}
\begin{array}{l}
     R:\mathcal{H}_0^s(\Omega)\mapsto\mathbb{H}_0^{\delta,s}(\Omega)\\
              \mkern+69.7mu v\mapsto R[v]=v\big|_{\Omega_{\delta}}         
		 \end{array}
\end{equation*}
is a continuous linear mapping. Hence, $\mathcal{H}_0^s(\Omega)$ can be continuously embedded into $\mathbb{H}_0^{\delta,s}(\Omega)$. On the other hand, for a given $v\in\mathbb{H}_0^{\delta,s}(\Omega)$, extending $v$ by 0 on $\mathbb{R}^N\backslash \Omega_{\delta}$, we have
\begin{equation*}
\begin{split}
\iint\limits_{\mathcal{D}}\frac{|v(x)-v(y)|^2}{|x-y|^{N+2s}}dydx
=&\int_{\Omega_{\delta}}\int_{\Omega_{\delta}\cap B(x,\delta)}\frac{|v(x)-v(y)|^2}{|x-y|^{N+2s}}dydx\\
&+\int_{\Omega_{\delta}}\int_{\Omega_{\delta}\backslash B(x,\delta)}\frac{|v(x)-v(y)|^2}{|x-y|^{N+2s}}dydx.
\end{split}
\end{equation*}
Using the variational characterization of the eigenvalue $\lambda_{1}^{\delta,s}$, we find
\begin{equation*}
\begin{split}
\int_{\Omega_{\delta}}\int_{\Omega_{\delta}\backslash B(x,\delta)}\frac{|v(x)-v(y)|^2}{|x-y|^{N+2s}}dydx
\leq&\frac{1}{\delta^{N+2s}}\int_{\Omega_{\delta}}\int_{\Omega_{\delta}\backslash B(x,\delta)}|v(x)-v(y)|^2dydx\\
\leq&\frac{2}{\delta^{N+2s}}\int_{\Omega_{\delta}}\int_{\Omega_{\delta}\backslash B(x,\delta)}|v(x)|^2+|v(y)|^2dydx\\
\leq&\frac{2}{\delta^{N+2s}}\int_{\Omega_{\delta}}\int_{\Omega_{\delta}}|v(x)|^2+|v(y)|^2dydx\\
=&\frac{4}{\delta^{N+2s}}\int_{\Omega_{\delta}}\int_{\Omega_{\delta}}|v(x)|^2dydx\\
=&\frac{4|\Omega_{\delta}|}{\delta^{N+2s}}\|v\|_{L^2(\Omega)}^{2}\\
\leq&\frac{4|\Omega_{\delta}|}{\delta^{N+2s}}\frac{1}{\lambda_1^{\delta,s}}\int_{\Omega_{\delta}}\int_{\Omega_{\delta}\cap B(x,\delta)}\frac{|v(x)-v(y)|^2}{|x-y|^{N+2s}}dydx.
\end{split}
\end{equation*}
Thus, $\vertiii{v}_{\mathcal{H}_0^s}^2\leq C(\delta)\vertiii{v}_{\mathbb{H}_0^{\delta,s}}^2$, where $C(\delta)=\left(1+\frac{4|\Omega_{\delta}|}{\delta^{N+2s}}\frac{1}{\lambda_1^{\delta,s}}\right)$.
As a consequence, given $v\in\mathbb{H}_0^{\delta,s}(\Omega)$, the extension operator
\begin{equation*}
\begin{array}{l}
     E:\mathbb{H}_0^{\delta,s}(\Omega)\mapsto\mathcal{H}_0^s(\Omega)\\
              \mkern+79.2mu v\mapsto E[v]= \left\{\begin{array}{rl}
                                                   v&\mbox{in}\quad \Omega_{\delta},\\
                                                   0&\mbox{in}\quad \mathbb{R}^N\backslash\Omega_{\delta},\\
		                                              \end{array}\right.       																							\end{array}
\end{equation*}
is a linear continuous mapping so that $\mathbb{H}_0^{\delta,s}(\Omega)$ can be continuously embedded into $\mathcal{H}_0^s(\Omega)$. Now, by \eqref{contenido} and \eqref{normcomparison3}, for any positive $\delta_1,\,\delta_2$ with $\delta_1<\delta_2$,
\begin{equation*}
\mathbb{H}_0^{\delta_1,s}(\Omega)\subset\mathbb{H}_0^{\delta_2,s}(\Omega)\subset \mathcal{H}_0^s(\Omega).
\end{equation*}
In particular, the sequence of eigenvalues $\{\lambda_1^{\delta,s}\}_{\delta>0}$ is increasing in $\delta$ and uniformly bounded from above by the first eigenvalue $\lambda_{1}^{\infty,s}$ of the fractional Laplacian $(-\Delta)_{\infty}^{s}$. Therefore, the constant $C(\delta)$ satisfies
\begin{equation*}
\begin{split}
C(\delta)&=1+\frac{4|\Omega_{\delta}|}{\delta^{N+2s}}\frac{1}{\lambda_1^{\delta,s}}\leq1+c\frac{(diam(\Omega)+2\delta)^N}{\delta^{N+2s}}\frac{1}{\lambda_1^{\delta,s}}\to 1,\quad\text{as }\delta\to+\infty.
\end{split}
\end{equation*} 
\end{proof}
The following $\Gamma$-convergence result is in the core of the proofs of Theorems \ref{linearinf} and Theorem \ref{theigen2}.
\begin{lemma}\label{Gconvergenceinf}Let us consider the functional 
\begin{equation*}
\mathcal{E}_{\delta,s}(u)=\int_{\Omega_{\delta}}\int_{\Omega_{\delta}\cap B(x,\delta)}\frac{|u(x)-u(y)|^2}{|x-y|^{N+2s}}dydx.
\end{equation*} 
defined on $\mathbb{H}_0^{\delta,s}(\Omega)$. Then, the $\Gamma$-limit of $\mathcal{E}_{\delta,s}(u)$ is given by
\begin{equation}\label{GlimitFuncinf}
\mathcal{E}_{\infty,s}(u)=\int_{\mathbb{\mathbb{R}}^N}\int_{\mathbb{R}^N}\frac{|u(x)-u(y)|^2}{|x-y|^{N+2s}}dydx\quad\text{as }\delta\to+\infty.
\end{equation}
\end{lemma}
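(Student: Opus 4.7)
The plan is to exploit the monotone nesting \eqref{contenido} of the integration domains $\mathcal{D}_\delta$ and verify the two standard $\Gamma$-inequalities directly, with respect to the strong $L^2(\Omega)$ topology. Thanks to Lemma~\ref{isomorfia}, I would first extend every element of $\mathbb{H}_0^{\delta,s}(\Omega)$ by zero on $\mathbb{R}^N\setminus\Omega_\delta$, so that both $\mathcal{E}_{\delta,s}$ and $\mathcal{E}_{\infty,s}$ can be regarded as functionals on the common ambient space $\mathcal{H}_0^s(\Omega)$ (extended by $+\infty$ outside the corresponding energy space). Under this identification,
\begin{equation*}
\mathcal{E}_{\delta,s}(u)=\iint_{\mathcal{D}_\delta}\frac{|u(x)-u(y)|^2}{|x-y|^{N+2s}}\,dy\,dx,
\end{equation*}
with $\mathcal{D}_\delta\nearrow\mathcal{D}$, so $\{\mathcal{E}_{\delta,s}\}$ is a monotonically increasing family of functionals whose pointwise supremum is exactly $\mathcal{E}_{\infty,s}$; this is the natural candidate for the $\Gamma$-limit.

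For the limsup (recovery) inequality, I would simply take the constant sequence $u_\delta:=u$ whenever $u\in\mathcal{H}_0^s(\Omega)$ (otherwise $\mathcal{E}_{\infty,s}(u)=+\infty$ and there is nothing to prove). Because $u=0$ on $\Omega^c\supset\partial_\delta\Omega$, the restriction of $u$ to $\Omega_\delta$ lies in $\mathbb{H}_0^{\delta,s}(\Omega)$; the convergence in $L^2(\Omega)$ is trivial, and applying monotone convergence to $\mathcal{D}_\delta\nearrow\mathcal{D}$ with the fixed integrable integrand $|u(x)-u(y)|^2/|x-y|^{N+2s}$ yields $\mathcal{E}_{\delta,s}(u)\to\mathcal{E}_{\infty,s}(u)$.

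For the liminf inequality I would use a Fatou--monotone convergence double pass. Suppose $u_\delta\to u$ in $L^2(\Omega)$ with $\liminf_{\delta\to+\infty}\mathcal{E}_{\delta,s}(u_\delta)<+\infty$; after passing to a subsequence one may assume pointwise convergence a.e.\ on $\Omega$, and the boundary conditions $u_\delta\equiv 0$ on $\partial_\delta\Omega$ together with $u\equiv 0$ on $\Omega^c$ promote this to a.e.\ convergence on $\mathbb{R}^N$. Fixing $\delta_0>0$ and using $\mathcal{D}_{\delta_0}\subset\mathcal{D}_\delta$ for every $\delta\ge\delta_0$,
\begin{equation*}
\iint_{\mathcal{D}_{\delta_0}}\frac{|u_\delta(x)-u_\delta(y)|^2}{|x-y|^{N+2s}}\,dy\,dx\le\mathcal{E}_{\delta,s}(u_\delta),
\end{equation*}
so Fatou's lemma on the fixed domain $\mathcal{D}_{\delta_0}$ gives
\begin{equation*}
\iint_{\mathcal{D}_{\delta_0}}\frac{|u(x)-u(y)|^2}{|x-y|^{N+2s}}\,dy\,dx\le\liminf_{\delta\to+\infty}\mathcal{E}_{\delta,s}(u_\delta),
\end{equation*}
and letting $\delta_0\to+\infty$ together with monotone convergence on the left produces the desired inequality (and, as a by-product, $u\in\mathcal{H}_0^s(\Omega)$). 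The only genuinely delicate point is the a.e.\ convergence required to apply Fatou, which however follows immediately from the strong $L^2(\Omega)$ convergence after extraction plus the vanishing of $u_\delta$ and $u$ on $\Omega^c$; beyond that, the argument is essentially the routine Dini-type $\Gamma$-convergence statement for monotonically increasing families of functionals.
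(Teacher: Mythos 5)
Your proof is correct, and it rests on exactly the same structural observation that the paper uses — namely, that $\{\mathcal{E}_{\delta,s}\}_{\delta>0}$ is a monotonically increasing family of lower semicontinuous functionals (after the natural zero-extension identification of all the energy spaces inside $\mathcal{H}_0^s(\Omega)$, via Lemma \ref{isomorfia}), so that the $\Gamma$-limit must coincide with the pointwise supremum. The difference is one of packaging: the paper simply invokes the abstract statement for increasing sequences of l.s.c.\ functionals, citing \cite[Remark~1.40]{Braides}, while you unwind that abstract result and verify the $\liminf$ and $\limsup$ inequalities by hand. Your recovery-sequence argument (constant sequence plus monotone convergence on $\mathcal{D}_\delta\nearrow\mathcal{D}$) is exactly how the $\limsup$ inequality is obtained in that abstract setting; your Fatou-on-a-fixed-domain argument is exactly the proof that each truncated functional is lower semicontinuous, which is the hypothesis Braides needs. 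So the two routes are the same idea at different levels of abstraction; yours is more self-contained, the paper's is shorter.

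One small imprecision in the $\liminf$ step: you pass to a subsequence to get a.e.\ convergence and then conclude with $\liminf_{\delta}\mathcal{E}_{\delta,s}(u_\delta)$, but Fatou along an arbitrary a.e.-convergent subsequence only bounds the $\liminf$ along that subsequence, which in general dominates (not is dominated by) the full $\liminf$. The standard fix is to first extract a subsequence along which the $\liminf$ is attained as a genuine limit, and only then extract a further sub-subsequence with a.e.\ convergence; the Fatou bound along that sub-subsequence then controls the original $\liminf$. This is entirely routine and does not affect the soundness of the approach.
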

\begin{proof}
The sequence of functionals $\mathcal{E}_{\delta,s}(u)$ with $\delta\to+\infty$ is a monotone increasing sequence and functionals $\mathcal{E}_{\delta,s}$ are lower semicontinuous, cf. \cite{BellCor}. Therefore, because of \cite[Remark 1.40]{Braides}, 
\begin{equation*}
\mathcal{E}_{\delta,s}(u)\overset{\Gamma}{\to}\mathcal{E}_{\infty,s}(u)=\int_{\mathbb{\mathbb{R}}^N}\int_{\mathbb{R}^N}\frac{|u(x)-u(y)|^2}{|x-y|^{N+2s}}dydx.
\end{equation*}
\end{proof}
\begin{proof}[Proof of Theorem \ref{linearinf}]
Let us consider the energy functional associated to problem \eqref{problema},
\begin{equation*}
\begin{split}
\mathcal{J}_{\delta,s}(u)&=\frac{c_{N,s}}{4}\int_{\Omega_{\delta}}\int_{\Omega_{\delta}\cap B(x,\delta)}\frac{|u(x)-u(y)|^2}{|x-y|^{N+2s}}dydx-\int_{\Omega}fudx\\
&=\frac{c_{N,s}}{4}\mathcal{E}_{\delta,s}(u)-\int_{\Omega}fudx.
\end{split}
\end{equation*}
Then, by Lemma \ref{Gconvergenceinf}, we conclude
\begin{equation*}
\mathcal{J}_{\delta,s}(u)\overset{\Gamma}{\to}\mathcal{J}_{\infty,s}(u)=\frac{c_{N,s}}{4}\int_{\mathbb{\mathbb{R}}^N}\int_{\mathbb{R}^N}\frac{|u(x)-u(y)|^2}{|x-y|^{N+2s}}dydx-\int_{\Omega}fudx,
\end{equation*}
that is the energy functional associated to \eqref{problemainf}.\newline 
Now, if $u^{\delta,s}$ is the minimizer of $\mathcal{J}_{\delta,s}$, the sequence $\{\mathcal{J}_{\delta,s}(u^{\delta,s})\}_{\delta>0}$ is monotone increasing in $\delta$ and bounded from above by $\mathcal{J}_{\infty,s}(u^{\infty,s})$. Indeed, given $\delta_1<\delta_2$, then 
\begin{equation*} 
\mathcal{J}_{\delta_1,s}(u^{\delta_1,s})\le \mathcal{J}_{\delta_1,s}(u^{\delta_2,s}) \le \mathcal{J}_{\delta_2,s}(u^{\delta_2,s}).
\end{equation*} 
On the other hand, since
\begin{equation*}
\mathcal{J}_{\delta,s}(u^{\delta,s})=-\frac{c_{N,s}}{2} \vertiiin{u^{\delta,s}}_{\mathbb{H}_0^{\delta,s}}^2,
\end{equation*}
the sequence $\vertiiin{u^{\delta,s}}_{\mathbb{H}_0^{\delta,s}}$ is decreasing and bounded. Consequently, by Lemma \ref{isomorfia}, $\vertiiin{u^{\delta,s}}_{\mathcal{H}_0^{s}}$ is bounded and, by Lemma \ref{isomorfia} and the compact embedding of $\mathcal{H}_0^s(\Omega)$ into $L^2(\Omega)$, cf. \cite[Corollary 7.2]{DiNezzaPalaValdi}, there exists a subsequence (that we do not relabel) and $u^{\infty,s}\in \mathcal{H}_0^s(\Omega)$ such that
\begin{equation*} 
u^{\delta,s}\to u^{\infty,s}\quad\text{as }\delta\to+\infty.
\end{equation*}
The $\Gamma$-convergence of $\mathcal{J}_{\delta,s}$ to $\mathcal{J}_{\infty,s}$ implies that $u^{\infty,s}$ is the unique minimizer of $\mathcal{J}_{\infty,s}$.
\end{proof}

\begin{proof}[Proof of Theorem \ref{theigen2}]
The proof follows by combining Lemma \ref{Gconvergenceinf} and Lemma $\ref{isomorfia}$ with the arguments used in the proof of Theorem \ref{theigen}. 

\end{proof}



\end{document}